\documentclass[11pt,reqno]{amsart}

\usepackage[T1]{fontenc}
\usepackage[utf8]{inputenc}

\usepackage{geometry}
\usepackage{etoolbox}
\expandafter\preto\csname section\endcsname{\par\vspace{0.5\baselineskip}}
\expandafter\preto\csname subsection\endcsname{\par\vspace{0.3\baselineskip}}
\expandafter\preto\csname subsubsection\endcsname{\par\vspace{0.2\baselineskip}}

\numberwithin{equation}{section}

\usepackage{amsmath, amssymb, amsfonts, mathrsfs}
\usepackage{mathtools}
\usepackage{enumitem}

\usepackage{graphicx}
\usepackage[all]{xy}
\usepackage{color}

\usepackage{caption}
\usepackage{comment}

\usepackage{hyperref}
\usepackage{bookmark}
\hypersetup{
    unicode            = true,
    bookmarksnumbered  = true,
    colorlinks         = false,
    hidelinks,
    final
}
\usepackage{xurl}
\usepackage{placeins}

\theoremstyle{plain}
\newtheorem{theorem}{Theorem}[section]
\newtheorem{lemma}[theorem]{Lemma}
\newtheorem{proposition}[theorem]{Proposition}
\newtheorem{corollary}[theorem]{Corollary}

\theoremstyle{definition}
\newtheorem{definition}[theorem]{Definition}
\newtheorem{example}[theorem]{Example}
\newtheorem{notation}[theorem]{Notation}

\newtheorem{assumption}[theorem]{Assumption}

\newtheorem{question}[theorem]{Question}

\theoremstyle{remark}
\newtheorem{remark}[theorem]{Remark}

\newcommand{\Ext}{\operatorname{Ext}}

\begin{document}

\title{
Principal twistor models and asymptotic hyperkähler metrics
}

\author{Ryota Kotani}
\address{Department of Mathematics, Institute of Science Tokyo}
\email{kotani.r.597b@m.isct.ac.jp, kotani.r.337@gmail.com}

\date{\today}

\begin{abstract}
Let $X$ be a conical symplectic variety admitting a crepant resolution $Y$. Based on the theory of universal Poisson deformations, we construct a complex manifold called the \textit{principal twistor model} associated with $Y$. We prove a universality theorem for this model: if the regular locus of $X$ admits a hyperkähler cone metric, then the twistor space of any algebraic hyperkähler metric on $Y$ asymptotic to this cone metric is uniquely recovered by slicing the principal twistor model. As an application, we use this universality to study the moduli space of algebraic hyperkähler structures with asymptotic behavior, and show that it admits an inclusion into a finite-dimensional real vector space.
\end{abstract}

\maketitle

\section{Introduction}

In various fields of mathematics and theoretical physics, hyperkähler metrics asymptotic to a hyperkähler cone appear naturally as geometric objects linking moduli theory, representation theory, and algebraic geometry, and significant progress has been made in the systematic research on their construction and classification.
For the construction of such metrics, there are three complementary perspectives: a differential-geometric approach based on nonlinear partial differential equations (PDEs), a complex symplectic geometric framework using twistor spaces, and an algebraic-geometric construction based on hyperkähler quotients. Typical examples of these metrics include ALE gravitational instantons \cite{Hitchin79}, QALE hyperkähler metrics \cite{Joyce01} as their higher-dimensional generalizations, and metrics appearing on toric hyperkähler manifolds \cite{Bielawski_Dancer00} and Nakajima quiver varieties \cite{Nakajima94}.

A hyperkähler metric is defined as a metric that is Kähler with respect to three complex structures satisfying the quaternionic relations. In this case, we naturally obtain a deformation of holomorphic symplectic manifolds indexed by the Riemann sphere $\mathbb{P}^1$, endowed with a real structure and a family of rational curves called twistor lines; this collection of data is referred to as the twistor space. Conversely, when a twistor space is given, the hyperkähler metric can be reconstructed from its structure \cite{HKLR}.
From this viewpoint, Hitchin \cite{Hitchin79} constructed ALE gravitational instantons, which are typical examples of algebraic hyperkähler metrics with asymptotic behavior.

In general, it is difficult to determine whether a rational curve on a deformation is a twistor line, and thus twistor lines cannot be handled directly.
On the other hand, it is known that the structure of a space admitting a hyperkähler metric with asymptotic behavior is largely determined by the properties of the asymptotic cone metric.
Motivated by this observation, the goal of this paper is to reveal the structure of the twistor space corresponding to algebraic hyperkähler cone metrics and, based on this, to characterize the structure of the twistor space of algebraic hyperkähler metrics with asymptotic behavior (Theorem~\ref{intro main thm: universality of PTM}).

\medskip
To explain the main results, we first introduce the following notions and setup.

We define a twistor model to be a structure satisfying all conditions of a twistor space except for the existence of twistor lines. As a generalization of twistor models, we define a new notion called the principal twistor model, which consists of a deformation of holomorphic symplectic manifolds indexed by a vector bundle $\mathcal{C}(2)$ over $\mathbb{P}^1$ and a real structure on it. For any choice of a real section $s$ of the vector bundle $\mathcal{C}(2)$ (cf.~Def.~\ref{def:real section of C(2)}), we obtain a twistor model $Z_s$ by slicing the principal twistor model along the section $s$ (cf.~Prop.~\ref{prop:slicing of PT}).
In other words, the twistor model $Z_s$ is given by restricting the principal twistor model to the real section $s$.
As will be explained later, the principal twistor model provides a natural framework for studying the moduli space of algebraic hyperkähler structures asymptotic to an algebraic hyperkähler cone (Cor.~\ref{intro cor:inclusion of moduli space}). In this framework, the asymptotic behavior of the metric can be interpreted as a degeneration of twistor spaces (cf.~Remark~\ref{rem:degeneration of tw sp}).

In this paper, as the asymptotic hyperkähler cone, we consider a conical symplectic variety $X$ with a good triple (cf.~Def.~\ref{def: good triple on X}) ---that is, an affine variety equipped with a good $\mathbb{C}^*$-action, a holomorphic symplectic form, and a quaternionic structure.
As will be explained later, this condition is necessary for a conical symplectic variety to admit an algebraic hyperkähler cone metric (Theorem~\ref{intro main thm: universality of PTM}). We further assume that the variety $X$ admits
a projective crepant resolution $Y$ (which we simply call a crepant resolution).

To study the geometry of $Y$, we use the theory of Poisson deformations. A Poisson deformation is a deformation of a complex manifold equipped with a compatible deformation of the Poisson bracket, which corresponds to a relative holomorphic symplectic form. By a theorem of Namikawa \cite{Namikawa11}, the crepant resolution $Y$ admits the universal Poisson deformation $\mathcal{Y} \to \mathcal{C}$, which captures all such deformations, and its base space $\mathcal{C}$ is naturally isomorphic to $H^2(Y;\mathbb{C})$.
Based on the idea of Bielawski and Foscolo \cite{Bielawski-Foscolo21} of gluing this universal Poisson deformation via the $\mathbb{C}^*$-action, the principal twistor model $\mathcal{Y}(1)$ is constructed (Prop.~\ref{intro prop: construction of PTM}; see Figures~\ref{fig:PTM} and \ref{fig:twistor model in PTM} on page~\pageref{fig:PTM}).

\medskip
We present three main results of this paper. First, we show that the principal twistor model is naturally constructed from a good triple on a conical symplectic variety. This result is shown by arguments on formal universal Poisson deformations and applying a $\mathbb{C}^*$-gluing construction (cf.~\S \ref{subsec:C^*-gluing construction}):

\begin{proposition}[Proposition \ref{main prop: PTM of Y}]\label{intro prop: construction of PTM}

    Let $X$ be a conical symplectic variety with a good triple, and assume that it admits a crepant resolution $Y$. Let $\mathcal{Y} \rightarrow \mathcal{C}$ denote the universal Poisson deformation of $Y$, where the base space $\mathcal{C}$ is isomorphic to $H^2(Y;\mathbb{C})$.
    Then, the good triple naturally extends to a triple on $\mathcal{Y}$. Furthermore, by applying the $\mathbb{C}^*$-gluing construction to this triple, the principal twistor model $\mathcal{Y}(1) \rightarrow \mathcal{C}(2)$ is constructed.
\end{proposition}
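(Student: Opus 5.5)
The plan is to first transport each piece of the good triple on $X$ (the good $\mathbb{C}^*$-action, the holomorphic symplectic form $\omega$ of some weight $\ell$, and the quaternionic/real structure $j$) to the universal Poisson deformation $\mathcal{Y}\to\mathcal{C}$ by universality, and then glue two copies of $\mathcal{Y}$ over the two charts of $\mathbb{P}^1$ using the $\mathbb{C}^*$-action.

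\textbf{Extending the triple.} Work first at the formal level, with the formal universal Poisson deformation $\hat{\mathcal{Y}}\to\operatorname{Spf}\widehat{\mathcal{O}}_{\mathcal{C},0}$. Its base is pro-represented by $H^2(Y;\mathbb{C})$ via the Poisson cohomology identification $HP^2(Y)\cong H^2(Y;\mathbb{C})$, which holds because $Y$ is symplectic and $H^i(Y,\mathcal{O}_Y)=0$ for $i>0$. Since the $\mathbb{C}^*$-action on $Y$ (lifted from $X$ through the crepant resolution) rescales $\omega$ by $t^\ell$, it acts on this deformation functor. Hence it acts on the formal base, and one computes that it acts linearly on $\mathcal{C}\cong H^2$ with weight $\ell$; this is the only twisting compatible with $\omega_t$ scaling. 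Because everything is conical, Namikawa's algebraization argument (as in \cite{Namikawa11}) makes the action effective on the algebraic $\mathcal{Y}\to\mathcal{C}$, and the relative symplectic form $\omega_{\mathcal{Y}/\mathcal{C}}$ has weight $\ell$. In the same way, $j$ is antiholomorphic and satisfies the quaternionic compatibility with $\omega$ and the $\mathbb{C}^*$-action. Conjugating a Poisson deformation by $j$ gives another Poisson deformation, so universality yields an antiholomorphic lift $\tilde{j}$ on $\mathcal{Y}$ covering an antilinear map on $\mathcal{C}$. That base map should be $-\bar{\cdot}$ or complex conjugation on $H^2(Y;\mathbb{C})$, twisted by the weight. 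Uniqueness in universality gives $\tilde{j}^2$ equal to the lift of $j^2$, and shows the compatibility relations persist.

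\textbf{$\mathbb{C}^*$-gluing.} Take $U_0=\mathcal{Y}\times\mathbb{C}_\zeta$ and $U_1=\mathcal{Y}\times\mathbb{C}_{\tilde\zeta}$, and identify them over $\zeta\neq0$ by $(y,\zeta)\mapsto(\zeta^{-1}\cdot y,\zeta^{-1})$ with the weight normalized as in \S\ref{subsec:C^*-gluing construction}. On the base this gives transition $c\mapsto \zeta^{-2}c$ when $\ell=2$ (after normalizing weights), so the base glues to the total space of $\mathcal{C}\otimes\mathcal{O}(2)=\mathcal{C}(2)$. The relative form becomes an $\mathcal{O}(2)$-valued relative symplectic form, as in twistor theory. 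The real structure is $(y,\zeta)\mapsto(\tilde{j}y,-1/\bar\zeta)$; checking that it commutes with the gluing uses the relation $j\circ t=\bar t\circ j$ (or $\bar t^{-1}$) from the good triple. The result is $\mathcal{Y}(1)\to\mathcal{C}(2)$ with its real structure.

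\textbf{Main obstacle.} The hardest step is the extension of $j$. It is antiholomorphic, so universality applies only after passing to the complex-conjugate deformation, and one must show that the induced map on $H^2$ is exactly the conjugation needed for the $\mathcal{O}(2)$ real structure. The natural approach is to verify this on the Kodaira–Spencer/period map $\mathcal{C}\to H^2$, where $[\omega_c]$ is affine in $c$, together with the relation between $j^*\omega$ and $\bar\omega$.
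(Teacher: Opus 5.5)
The gluing half of your plan matches the paper's $\mathbb{C}^*$-gluing construction and is fine. The gap is in the extension of $j$, which is where the paper's actual work lies.

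\textbf{The lift of $j$ is not unique.} Your sentence ``uniqueness in universality gives $\tilde{j}^2$ equal to the lift of $j^2$, and shows the compatibility relations persist'' does not follow. Universality makes the \emph{base} map $\sigma$ unique. An isomorphism of total spaces, however, is determined only up to Poisson automorphisms of the deformation that are the identity on the central fibre and over the base. There are many of these, e.g.\ $\exp(\epsilon\{f,\cdot\})$ with $f$ a function and $\epsilon$ in the maximal ideal of the base. So a lift $\tilde{j}$ produced by universality need not satisfy $\tilde{j}^2=\tilde{\lambda}_{-1}$ or $\tilde{j}\circ\tilde{\lambda}_t=\tilde{\lambda}_{\bar t}\circ\tilde{j}$. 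Without the latter you also cannot run the $\mathbb{C}^*$-algebraization for $\tilde{j}$.

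The paper handles this order by order on the formal universal Poisson deformation:
\begin{enumerate}
\item it first corrects $j_n'$, using pro-representability, so that it is Poisson and extends $j_{n-1}$;
\item it observes that the ambiguity $A_n=\mathrm{PAut}(\eta_n;\mathrm{id}|_{\eta_{n-1}})$ is a vector space;
\item it notes that the defect $\hat{j}_n^2=-v$ with $v\in A_n$ defines a $2$-cocycle of $\mathbb{Z}/4\mathbb{Z}$ in $A_n$;
\item it kills this cocycle using $H^2(\mathbb{Z}/4\mathbb{Z},A_n)=0$, replacing $j_n$ by $j_n\circ u_i^{-1}$.
\end{enumerate}
Compatibility with $\lambda$ is then arranged, and algebraization works because $\tilde{j}$ preserves the subalgebra of $\mathbb{C}^*$-eigenvectors in $\varprojlim P_n$. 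You could instead try to make your uniqueness claim true by working $\mathbb{C}^*$-equivariantly and using the positive grading to show the relevant equivariant automorphism group is trivial. That would need proof, and anti-equivariance of $\tilde{j}$ must be established first.

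Conversely, the step you flag as the main obstacle is the easy part. The base map is unique, so it squares to the base action of $\lambda_{-1}$, which is trivial since the weight is $2$. It also satisfies $\sigma(t^2c)=\bar t^2\sigma(c)$. Hence it is complex conjugation for some real form of $\mathcal{C}$, which is all that $\sigma_2$ needs.

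\textbf{You assume $j$ already lives on $Y$.} To view $\overline{\mathcal{Y}}$ as a Poisson deformation of $Y$, so that universality of $\mathcal{Y}$ applies, you need an identification $\overline{Y}\cong Y$ over $X$. But the good triple lives on $X$, and unlike the $\mathbb{C}^*$-action, an antiholomorphic automorphism of $X$ need not lift to a given crepant resolution. This is not a formality.

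Take $X=\overline{\mathcal{O}_{\min}}\subset\mathfrak{sl}_n(\mathbb{C})$ with $n\ge 3$, $Y=T^*\mathbb{P}^{n-1}=\{(L,A)\mid \operatorname{Im}A\subseteq L\subseteq\ker A\}$, and the Cartan involution $j(A)=-\bar{A}^{T}$ of Example~\ref{ex:nilpotent cone}. For $A\neq 0$ one has $\operatorname{Im} j(A)=(\ker A)^{\perp}$ (Hermitian complement). Consider $A_t=t\,e_1w^{T}\to 0$ with $w_1=0$. The would-be lift sends $([e_1],A_t)$ to $([\bar w],j(A_t))\to([\bar w],0)$, and this limit depends on $w$. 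So $j$ lifts to an antiholomorphic isomorphism onto the Mukai flop $T^*(\mathbb{P}^{n-1})^{\vee}$, via $(L,A)\mapsto(L^{\perp},j(A))$, but not to $Y$. For this good triple no lift to $\mathcal{Y}$ exists at all, since its restriction to the central fibre would lift $j$ to $Y$.

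The paper avoids starting on $Y$: it extends the triple on $\mathcal{X}\to\mathcal{B}$, pulls back to $\mathcal{X}'$, and only then passes through $\nu\colon\mathcal{Y}\to\mathcal{X}'$. But the codimension-two (Hartogs) argument used there only controls the composite map to the affine $\mathcal{X}'$, not the lift through $\nu$. So in either route an extra argument or hypothesis is needed at this step. One sufficient condition is that the antiholomorphic birational self-map $\nu^{-1}\circ\tilde{j}'\circ\nu$ of $\mathcal{Y}$ carries relatively ample classes to (conjugates of) relatively ample ones. This is automatic when $Y$ is the only crepant resolution of $X$, as for $\mathbb{C}^2/\Gamma$.
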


\begin{remark}
    The extension of the data other than the quaternionic structure to the universal Poisson deformation is due to Namikawa \cite{Namikawa08, namikawa2026notes}. In this work, we show that the quaternionic structure can be similarly extended.
\end{remark}

The following theorem on the universality of the principal twistor model is the central result of this paper. This theorem states that any twistor space corresponding to an algebraic hyperkähler metric asymptotic to a hyperkähler cone is uniquely obtained by slicing the principal twistor model along a real section. This result is shown by arguments on the universality of the universal Poisson deformation and the asymptotic behavior of the metric:

\begin{theorem}[Theorem~\ref{main thm: universality of PTM}]\label{intro main thm: universality of PTM}
    Let $X$ be a conical symplectic variety, and assume that the regular locus $X_{\mathrm{reg}}$ admits an algebraic (cf.~Def.~\ref{def:alg for tw and HK met}) hyperkähler cone metric $g_0$ (cf.~Def.~\ref{def:cone met on coni symp vrt}). Then, the metric $g_0$ naturally defines a good triple on $X$.
    Suppose further that $X$ admits a crepant resolution $Y$. Let $\mathcal{Y}(1) \rightarrow \mathcal{C}(2)$ be the principal twistor model constructed in Proposition \ref{intro prop: construction of PTM} for this good triple.
    Consider an algebraic hyperkähler metric $g$ on $Y$ (cf.~Def.~\ref{def:alg for tw and HK met}) that is asymptotic at infinity to the metric $g_0$ (cf.~Def.~\ref{def:asymp to cone metric}).
    Then, there exists a unique real section $s$ of the vector bundle $\mathcal{C}(2)$ such that the twistor space $Z$ corresponding to the metric $g$ is isomorphic, as a twistor model, to the model $Z_s$ obtained by slicing the principal twistor model $\mathcal{Y}(1)$ along the real section $s$.
\end{theorem}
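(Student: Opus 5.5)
We split the proof into three parts: the good triple coming from $g_0$, existence of $s$, and uniqueness of $s$.

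\emph{Step 1: the good triple.} The hyperkähler cone metric $g_0$ on $X_{\mathrm{reg}}$ carries an Euler vector field $r\partial_r$ and a Reeb-type vector field $I(r\partial_r)$. Together these integrate to the $\mathbb{C}^*$-action; algebraicity of $g_0$ is what makes this action algebraic and good, with positive weights. The $I$-holomorphic symplectic form $\omega_J+i\omega_K$ is homogeneous of weight $2$. The quaternionic structure is the antiholomorphic involution induced by $J$ (or, equivalently, the real structure of the cone's twistor space). Normality of $X$ then extends all three data from $X_{\mathrm{reg}}$ to $X$.

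\emph{Step 2: existence of $s$.} Let $Z\to\mathbb{P}^1$ be the twistor space of $g$. Over $\zeta\in\mathbb{C}=\mathbb{P}^1\setminus\{\infty\}$, each fibre $Z_\zeta$ is a holomorphic symplectic manifold. The asymptotic condition, via the decay of $g-g_0$, implies that $Z_\zeta$ is a Poisson deformation of $Y$ whose affinization is a Poisson deformation of $X$. More precisely, the family $Z|_{\mathbb{C}}\to\mathbb{C}$ is a Poisson deformation of $Y=Z_0$, at least formally near $0$ and algebraically by the algebraicity assumption.

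By the universality of $\mathcal{Y}\to\mathcal{C}$, this family is pulled back along a unique classifying map $\mathbb{C}\to\mathcal{C}\cong H^2(Y;\mathbb{C})$. This map is the period map $\zeta\mapsto[\omega_J+i\omega_K]+2\zeta[\omega_I]-\zeta^2[\omega_J-i\omega_K]$ (up to convention). It is therefore quadratic in $\zeta$, and so it extends to a section of $\mathcal{C}(2)$, matching the transition function of the $\mathbb{C}^*$-gluing. Doing the same on the other chart $\mathbb{P}^1\setminus\{0\}$, and using compatibility of the gluing in Proposition~\ref{intro prop: construction of PTM} with the $\mathbb{C}^*$-action, gives a section $s$ together with an isomorphism $Z\cong Z_s$ of deformations over $\mathbb{P}^1$.

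Reality of $s$ follows from the real structure $\sigma$ of $Z$. On the principal twistor model, $\sigma$ corresponds to the real structure built from the extended quaternionic structure, and uniqueness of classifying maps forces $\overline{\sigma^*s}=s$. Finally, one checks that the isomorphism intertwines the relative symplectic forms with values in $\mathcal{O}(2)$ and the real structures. This is again by uniqueness: two real structures that agree up to Poisson automorphisms must coincide after the asymptotic normalization.

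\emph{Step 3: uniqueness.} If $Z_s\cong Z_{s'}$ as twistor models, then restricting to each fibre gives isomorphic Poisson deformations that are compatible with the identification at infinity, that is, asymptotically the identity on the cone. Since $\mathcal{Y}\to\mathcal{C}$ is universal rather than merely versal (Namikawa), the classifying points agree, so $s=s'$.

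The main obstacle is the beginning of Step 2: showing that the twistor fibres of $g$ genuinely form a Poisson deformation of $Y$ in the algebraic category, with $\mathbb{C}^*$-equivariant gluing that is compatible with the gluing defining $\mathcal{Y}(1)$. To handle this, I would use the asymptotic decay to show that the $\mathbb{C}^*$-flow, rescaling the metric by $t^{-2}$, degenerates $Z$ to the twistor space of $g_0$. Universality would then be applied over the weighted formal neighbourhood, and the result upgraded to an actual isomorphism using the conicity and algebraicity assumptions.
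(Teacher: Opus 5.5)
\textbf{There is a genuine gap, at the step you flag as the main obstacle.}

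Your handling of the classifying maps agrees with the paper: universality gives $s^\pm$, and reality and uniqueness of $s$ follow from uniqueness of classifying maps. Reading off from the period map that $s^+$ is quadratic in $\zeta$ is a legitimate substitute for the paper's use of $\psi^*\omega=u^{-2}\omega$.

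The problem is the step you yourself call the main obstacle, which is where the content of the theorem lies. Chartwise isomorphisms $Z^\pm\cong Z_s^\pm$ do not yield $Z\cong Z_s$, because they are unique only up to Poisson automorphisms of the families. So a priori the transition map of $Z$ differs from the $\mathbb{C}^*$-gluing of $\mathcal{Y}(1)$ along $s$ by such an automorphism over $\mathbb{C}^*$. Likewise, $\tau$ may differ from $\tau_P|_{Z_s}$.

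\begin{itemize}
\item Appealing to ``compatibility of the gluing with the $\mathbb{C}^*$-action'' presupposes exactly this.
\item ``Two real structures that agree up to Poisson automorphisms must coincide after the asymptotic normalization'' is asserted, not proved. The same applies to the ``asymptotically the identity'' normalization in your Step~3.
\item The proposed remedy does not supply the argument. You suggest degenerating $Z$ by rescaling to the twistor space of $g_0$ and applying universality over a weighted formal neighbourhood. But the rescaling is only an $\mathbb{R}^+$-flow, the convergence $g\to g_0$ is metric rather than holomorphic, and universality classifies deformations of a fibre, not gluing data of a family over $\mathbb{P}^1$.
\end{itemize}

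The paper closes this with a concrete rigidity argument in three stages.

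First, it needs that the twistor space of $g_0$ \emph{is} the twistor cone $X(1)$: trivial charts, the $\mathbb{C}^*$-gluing, and real structure $\hat{j}$. This is Proposition~\ref{prop:good triple from HK cone metric}, proved via $\mathbb{R}^+$-equivariant universality (which forces the classifying map to vanish) and the group-cohomology conjugacy lemma. Your Step~1 never establishes this identification, which is the reference against which the decay of $g-g_0$ is measured. Also, the ``antiholomorphic involution induced by $J$'' must be made precise as a map of points, for instance via $\tau|_{Z_0}$. It is not an involution, since $j^2=\lambda_{-1}$, and you do not verify that relation.

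Second, the paper passes to the affinizations $Z'^\pm\subset\mathcal{X}'\times\mathbb{C}^\pm$. There the gluing and real structure take the form
\[\psi'(x,u)=(u^{-1}\cdot x+f(x,u),\,u^{-1}),\qquad \tau'(x,u)=(-\bar u^{-1}\cdot\tilde{j}(x)+\overline{g(x,u)},\,-\bar u^{-1}).\]
The asymptotic condition gives $f,g\to 0$ as $|x|\to\infty$, which forces $f\equiv g\equiv 0$, so $Z'\cong Z'_s$.

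Third, $Z\cong Z_s$ follows from Hartogs, since $Z\to Z'$ and $Z_s\to Z'_s$ are simultaneous resolutions whose exceptional sets have codimension at least $2$.

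This affinize--decay--Hartogs step is the idea missing from your proposal.
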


\begin{remark}
    This theorem implies that the principal twistor model determines all the geometric data of the corresponding twistor space, except for the twistor lines.
    For instance, if we apply Proposition~\ref{intro prop: construction of PTM} and Theorem~\ref{intro main thm: universality of PTM} to the quotient space $X=\mathbb{C}^2/\Gamma$ ($\Gamma < SU(2)$), then we obtain the twistor models corresponding to ALE gravitational instantons.
\end{remark}

Finally, as a corollary of Theorem~\ref{intro main thm: universality of PTM}, we present an application to the moduli space of algebraic hyperkähler structures with asymptotic behavior. This result is obtained by combining an analysis of twistor lines with the asymptotic behavior of the metric:

\begin{corollary}[Cor.~\ref{cor:inclusion of moduli space}, Cor.~\ref{cor:moduli space of HK str, isolated sing case}]\label{intro cor:inclusion of moduli space}
    Let $X$ be a conical symplectic variety. Assume that the regular locus $X_{\mathrm{reg}}$ admits an algebraic hyperkähler cone metric $g_0$, and that $X$ has a crepant resolution $Y$.
    Let $\mathcal{M}$ be the moduli space of algebraic hyperkähler structures $(Y,g,I,J,K)$ defined as follows:
    \[ \mathcal{M} = \left\{ (Y,g,I,J,K) \mid \text{the metric $g$ is asymptotic at infinity to $g_0$} \right\} / (\text{isomorphisms}). \]
    Then, the moduli space $\mathcal{M}$ admits an inclusion into the real vector space $H^2(Y;\mathbb{R})\otimes \mathbb{R}^3$.
    Furthermore, if $X$ has an isolated singularity, then this inclusion is open. Thus, if $\mathcal{M}$ is non-empty, then we have $\dim_{\mathbb{R}}\mathcal{M}=3d$, where $d=\dim H^2(Y;\mathbb{C})$.
\end{corollary}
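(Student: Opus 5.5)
The plan is to deduce the corollary from Theorem~\ref{intro main thm: universality of PTM} by building the map $\Phi\colon \mathcal{M}\to H^2(Y;\mathbb{R})\otimes\mathbb{R}^3$ that sends a hyperkähler structure to the real section that recovers its twistor space. The first step is to identify the space of real sections of $\mathcal{C}(2)$ with $H^2(Y;\mathbb{R})\otimes\mathbb{R}^3$. Since $\mathcal{C}\cong H^2(Y;\mathbb{C})$, the bundle $\mathcal{C}(2)$ is $H^2(Y;\mathbb{C})\otimes\mathcal{O}(2)$. Its global sections are $H^2(Y;\mathbb{C})\otimes H^0(\mathbb{P}^1,\mathcal{O}(2))\cong H^2(Y;\mathbb{C})\otimes\mathbb{C}^3$. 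The real structure (antipodal map composed with the conjugation of $H^2$ induced by the quaternionic part of the good triple) cuts out the real form $H^2(Y;\mathbb{R})\otimes\mathbb{R}^3$, in the same way that real sections of $\mathcal{O}(2)$ form $\mathbb{R}^3\cong\mathfrak{su}(2)$. Concretely, a real section should be the triple of Kähler classes $([\omega_I],[\omega_J],[\omega_K])$. I would check this by comparing with the periods of the fibres $Z_s|_\zeta$ over $\zeta\in\mathbb{P}^1$.

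Given $(Y,g,I,J,K)\in\mathcal{M}$, the theorem supplies a unique real section $s$ with $Z\cong Z_s$, and I set $\Phi([Y,g,I,J,K])=s$. To see that this is well defined, note that an isomorphism of hyperkähler structures induces an isomorphism of twistor spaces. By uniqueness in the theorem, isomorphic structures then give the same $s$. Some care is needed with the identification of $H^2$: isomorphisms must be compatible with the asymptotic cone, so they act trivially on $H^2(Y)$ through the universal Poisson deformation.

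For injectivity, suppose two structures have the same $s$. Then their twistor spaces are both isomorphic to $Z_s$ as twistor models, so they differ only in their families of twistor lines. This is the step I expect to be the main obstacle. The twistor lines are real sections of $Z_s\to\mathbb{P}^1$ with normal bundle $\mathcal{O}(1)^{\oplus 2n}$, and such lines vary in a $4n$-dimensional real family. The argument I would try first uses the asymptotics. Both metrics are asymptotic to $g_0$, so at infinity their twistor lines must approach the twistor lines of the cone. Those are determined by the $\mathbb{C}^*$-gluing and the good triple, since at infinity the lines degenerate to the known cone lines, as in Remark~\ref{rem:degeneration of tw sp}. Rigidity of these deformations under the decay conditions, together with the fact that the space of real twistor lines is a connected component of the real $\mathcal{O}(1)^{\oplus2n}$-curves, should show that the two families coincide. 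By \cite{HKLR} the metrics then coincide, which gives injectivity.

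In the isolated singularity case, openness should follow from a deformation argument. For $s'$ near $s$, the twistor lines of $Z_s$ deform to $Z_{s'}$ by Kodaira's theorem, because $H^1(\mathcal{O}(1)^{\oplus 2n})=0$. Near infinity, the link is smooth, so the cone lines persist uniformly outside a compact set. This gives a complete family of real lines and hence a hyperkähler metric asymptotic to $g_0$, for instance by perturbing the ALE-type constructions with weighted analysis. So the image is open, and $\dim_{\mathbb{R}}\mathcal{M}=3d$ when $\mathcal{M}$ is nonempty.
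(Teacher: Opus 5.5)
Your outline matches the paper's: define $\Psi$ through the universality theorem, identify $H^0(\mathbb{P}^1,\mathcal{C}(2))^{\sigma_2}\cong H^2(Y;\mathbb{R})\otimes\mathbb{R}^3$, reduce injectivity to uniqueness of the twistor-line family on $Z_s$, and get openness by deforming lines. The gap is in the step you yourself flag as the obstacle, which is never actually closed. Inside the single slice $Z_s$, evaluation at $u=0$ makes the space of real $\mathcal{O}(1)^{\oplus 2n}$-lines a local covering of $Y$. Two admissible families are then two sheets of this covering, and your connected-component remark cannot tell which sheet is the right one. Saying that both families are ``close to the cone lines at infinity'' does not produce a common line either. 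The cone lines do not lie in $Z_s$ at all; they lie in the slice over the zero section. Also, distinct sheets over a noncompact base can approach each other at infinity without meeting.

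The missing idea is to work in all of $P=\mathcal{Y}(1)$ at once. The set $\mathcal{T}=\bigsqcup_s\mathcal{T}_s$ of twistor lines over all real sections is a smooth manifold of dimension $4n+3d$, since the sequence $0\to N_{\ell/Z_s}\to N_{\ell/P}\to\mathcal{C}(2)\to 0$ gives $H^1(N_{\ell/P})=0$. The map $\Phi=(ev,\varphi(1)_*)\colon\mathcal{T}\to Y\times H^0(\mathbb{P}^1,\mathcal{C}(2))^{\sigma_2}$ is a local diffeomorphism. The fiberwise $\mathbb{R}^+$-action sends $\mathcal{T}_s$ to $\mathcal{T}_{\varepsilon^2 s}$. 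Through an $\mathbb{R}^+$-equivariant trivialization of $\mathcal{Y}\setminus\mathcal{E}$, the asymptotic condition at $r\cdot y$ with $r\to\infty$ therefore becomes convergence $\ell_{y,\varepsilon^2 s}\to\ell_{y,0}$ to the cone line as $\varepsilon=r^{-1}\to 0$. So any admissible family is the lift of $(Y\setminus E)\times\{\varepsilon^2 s\}_{\varepsilon\ge 0}$ through $\Phi$ that starts at the canonical cone family. Uniqueness of such lifts through a local diffeomorphism then forces two admissible families to coincide.

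The openness step has the same gap. Kodaira's theorem deforms each line, but over noncompact $Y$ it gives no uniform neighbourhood of $s$. Compactness of the link only lets you continue the cone lines to $s'$ in a small neighbourhood $U_0$ of the zero section, not to $s'$ near an arbitrary $s$. The paper handles this in four steps, again using rescaling.
\begin{itemize}
\item Lines over a compact annulus $K_0$ with parameters in $U_0$, pushed outward by the $\mathbb{R}^+$-action, show that $\Phi$ is onto $(Y\setminus K)\times U$ for a compact $K\supset E$. This is where the isolated singularity is used.
\item Deformations of the lines of $Z_s$ cover $K\times U'$ for a small neighbourhood $U'$ of $s$.
\item Simple connectivity of $Y$ and $U'$ then yields one global lift containing the lines of $Z_s$.
\item The asymptotics of $g_{s'}$ follow from the same sheet argument as above.
\end{itemize}
Your appeal to weighted analysis for ALE-type perturbations is a different, unspecified route and does not replace this argument.
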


\begin{remark}
Corollary \ref{intro cor:inclusion of moduli space} can be viewed as an analogue of the injectivity part of the Torelli-type theorem for ALE gravitational instantons by Kronheimer \cite{Kronheimer89}. The geometric relationship between this inclusion and the periods of the hyperkähler structures is briefly discussed in Remark~\ref{rem:Torelli-type injectivity}.
\end{remark}

We now clarify the relationship with the prior work of Bielawski and Foscolo \cite{Bielawski-Foscolo21}. Bielawski and Foscolo introduced an idea to construct a space similar to the singular model of the principal twistor model (cf.~Remark~\ref{remark:sing PTM of X}) using the universal Poisson deformation of the space $X$ before taking the crepant resolution, and then to construct a singular model of the twistor space by slicing. This work builds on this idea to construct the principal twistor model. While \cite{Bielawski-Foscolo21} aims at the construction of (singular) twistor spaces, this work aims at proving the universality of the principal twistor model. Furthermore, there are differences in the setup, such as the good triple on the hyperkähler cone; in this work, we provide a rigorous proof of the extension of the triple to the Poisson deformation and a systematic formulation of the $\mathbb{C}^*$-gluing construction.

\medskip
The organization of the paper is as follows. In \S\ref{sec:PD of good triple vrt}, we introduce conical symplectic varieties with a good triple and show that the triple extends to the universal Poisson deformation of its crepant resolution. In \S\ref{sec:PTM}, we define the principal twistor model and explain its construction after introducing the $\mathbb{C}^*$-gluing construction. We also prove the universality of the principal twistor model. In \S\ref{sec:Applications}, as an application of this work, we study the moduli space of algebraic hyperkähler structures with asymptotic behavior. We also introduce metrics constructed via hyperkähler quotients and QALE hyperkähler metrics as concrete examples to which this work applies under certain assumptions. Finally, we conclude the paper by discussing future directions.

\section{Conical symplectic varieties with good triples}\label{sec:PD of good triple vrt}

In this section, as a preparation for stating the main results, we introduce conical symplectic varieties with a good triple. We also show that when such a variety admits a crepant resolution, the good triple naturally lifts to the universal Poisson deformation of the resolution.

\subsection{Definitions}\label{subsec:def: good triple on X}

In this subsection, we introduce the definition and examples of conical symplectic varieties with a good triple.
\medskip

First, we recall the definition of a conical symplectic variety.

\begin{definition}[cf.~\cite{Namikawa15}] \label{def: coni symp}
    We call an affine holomorphic symplectic variety $(X, \omega)$ equipped with a good $\mathbb{C}^*$-action $\lambda$ a \textit{conical symplectic variety}. Here, a \textit{good $\mathbb{C}^*$-action} is a $\mathbb{C}^*$-action $\lambda$ satisfying the following two conditions:
    \begin{enumerate}
     \item The coordinate ring $\Gamma(X,\mathcal{O}_X)$ is positively graded with respect to the action $\lambda$.
     \item The action $\lambda$ acts on the holomorphic symplectic $2$-form $\omega$ with positive weight.
    \end{enumerate}
\end{definition}

\begin{remark}
    A conical symplectic variety $X$ may be singular. In this case, we consider $\omega$ as a $2$-form on the regular locus $X_{\mathrm{reg}}$.
\end{remark}

Next, we define a quaternionic structure on a complex variety equipped with a $\mathbb{C}^*$-action.

\begin{definition}\label{def:quat str}
    An anti-holomorphic automorphism $j$ on a complex variety $X$ with a $\mathbb{C}^*$-action $\lambda$ is called a \textit{quaternionic structure} on $X$ if it satisfies
    \[j^2 = \lambda_{-1},\]
    where $\lambda_{-1}$ denotes the action of $-1 \in \mathbb{C}^*$.
\end{definition}

\begin{remark}
    When $\lambda_{-1}=\lambda_1 = \mathrm{id}$, $j$ is actually a real structure. However, for convenience, we also refer to $j$ as a quaternionic structure on $X$ in this case.
\end{remark}

We now introduce the notion of a good triple on a conical symplectic variety.

\begin{definition} \label{def: good triple on X}
    Let $(X, \omega)$ be a conical symplectic variety with $\mathbb{C}^*$-action $\lambda$, and let $j$ be a quaternionic structure on $X$. We call the triple $(\lambda, \omega, j)$ a \textit{good triple} on $X$ if they are pairwise compatible in the following sense:
    \begin{align*}
       \text{(1)  }&\lambda^*\omega = \lambda^2\omega, \\
       \text{(2)  }&j^*\omega=\overline{\omega}, \\
       \text{(3)  }&j\circ \lambda=\bar{\lambda}\circ j.
    \end{align*}
\end{definition}

\begin{remark}
    We only consider the case where the weight of the $\mathbb{C}^*$-action on the symplectic $2$-form $\omega$ is $2$.
\end{remark}

\begin{remark}
    As will be shown later in Theorem~\ref{main thm: universality of PTM}, admitting a good triple is in fact a necessary condition for a conical symplectic variety to admit an algebraic hyperkähler cone metric. This geometric requirement motivates the above definition.
\end{remark}

\begin{example}[Quotient spaces] \label{ex:std triple}
    \begin{enumerate}
       \item The vector space $\mathbb{C}^{2n}$ admits a standard good triple as follows:
       in terms of standard coordinates $(x_1,\ldots,x_n;y_1,\ldots, y_n)$,
        \begin{align*}
        \text{(a)  }&\lambda=\lambda_{\text{Scal}}\ (\text{scalar multiplication}), \\
        \text{(b)  }&\omega=dx_1\wedge dy_1 + \cdots + dx_n\wedge dy_n, \\
        \text{(c)  }&j(x_i,y_i)=(\bar{y}_i,-\bar{x}_i),\ (i=1,\ldots,n).
        \end{align*}

       \item Let $G$ be a finite subgroup of the compact symplectic group $Sp(n)$ whose action preserves $\omega$. Consider the quotient space $X=\mathbb{C}^{2n}/G$. Then, the action of any element $g\in G$ is compatible with the standard good triple $(\lambda_{\text{Scal}}, \omega, j)$ on $\mathbb{C}^{2n}$ as follows:
             \begin{align*}
                \text{(a)  }&\lambda\circ g=g\circ \lambda, \\
                \text{(b)  }&g^*\omega=\omega, \\
                \text{(c)  }&j\circ g= g \circ j.
             \end{align*}
             Therefore, $X$ naturally inherits the good triple from the standard one on $\mathbb{C}^{2n}$.
    \end{enumerate}
\end{example}

\begin{example}[Nilpotent cone] \label{ex:nilpotent cone}
    Let $G$ be a complex semisimple Lie group and $\mathfrak{g}$ its Lie algebra. Let $\mathcal{N}_G$ be the nilpotent cone with respect to the adjoint action on $\mathfrak{g}$.
    Let $\lambda_{\text{Scal}}$ be the scalar multiplication on $\mathfrak{g}$, $\omega_{KK}$ the Kirillov-Kostant form, and $\tau$ a Cartan involution (cf.~\cite{Collingwood-McGovern93}). Then $\lambda_{\text{Scal}}$ and $\tau$ preserve the nilpotent cone $\mathcal{N}_G$, and the restriction of the triple $(\lambda_{\text{Scal}}^2, \omega_{KK}, \tau)$ to $\mathcal{N}_G$ defines a good triple on the conical symplectic variety $\mathcal{N}_G$
    (cf.~\cite[\S 3.1]{Bielawski-Foscolo21}, \cite{Kronheimer90}).
\end{example}

\subsection{Universal Poisson deformations}

In this subsection, we show that when a conical symplectic variety with a good triple admits a crepant resolution, the good triple naturally extends to the universal Poisson deformation of the resolution.

\medskip

Before stating the results, we briefly recall the notion of Poisson deformations. A Poisson deformation of a holomorphic symplectic variety is a deformation of the complex space equipped with a compatible deformation of the Poisson structure, which corresponds to a relative holomorphic symplectic form. A universal Poisson deformation is characterized by its universal property: any Poisson deformation can be obtained uniquely as a pullback from this family. In the proofs later in this section, we also use formal Poisson deformations, which correspond to successive extensions of infinitesimal deformations.

\medskip
First, we recall results by Namikawa on universal Poisson deformations.

\begin{theorem}[Namikawa {\cite[\S 5]{Namikawa11}, \cite{Namikawa08, Namikawa15}} ] \label{Namikawa Thm:CM and univ Poisson}
    Let $X$ be a conical symplectic variety.
    Assume that $X$ admits a crepant resolution $Y$. Then, there exist universal Poisson deformations $\mathcal{X}\rightarrow\mathcal{B}$ and $\mathcal{Y}\rightarrow\mathcal{C}$ of $X$ and $Y$, respectively. Here, the base spaces $\mathcal{B}$ and $\mathcal{C}$ are vector spaces, and $\mathcal{C}$ is naturally isomorphic to $H^2(Y;\mathbb{C})$ via the period map.
    Furthermore, there exists a quotient map $q:\mathcal{C}\rightarrow\mathcal{B}$ by a finite group such that if we let $\mathcal{X}'\rightarrow\mathcal{C}$ be the pullback of $\mathcal{X}$ via $q$, then $\mathcal{Y}$ is a simultaneous resolution of $\mathcal{X}'$.
    That is, there exists a morphism $\nu:\mathcal{Y}\rightarrow\mathcal{X}'$ such that the following diagram commutes:
    \begin{equation}
    \vcenter{
    \xymatrix{
       \mathcal{Y} \ar[d]_{} \ar[r]^\nu &\mathcal{X}' \ar[d]^{}\ar[r] &\mathcal{X} \ar[d]^{} \\
       \mathcal{C} \ar@{=}[r] &\mathcal{C} \ar[r]^{q}&\mathcal{B} \\
    }} \label{diagram:Yuni, Xuni}
    \end{equation}
    Moreover, for any $c \in \mathcal{C}$, the restriction of the morphism $\nu$ to the fiber over $c$,
    \[
    \nu_c:\mathcal{Y}_{c}\rightarrow \mathcal{X}'_{c}=\mathcal{X}_{q(c)},
    \]
    is a crepant resolution of $\mathcal{X}'_{c}$, and for a generic $c\in\mathcal{C}$, $\nu_c$ is an isomorphism.
\end{theorem}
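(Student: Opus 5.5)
This statement is a theorem of Namikawa, so the plan is to reassemble it from his papers; the steps below follow \cite{Namikawa08, Namikawa11, Namikawa15} and no new argument is needed. I will work in four steps: existence for $Y$, existence for $X$, the Weyl-group comparison of the two bases, and the statements about fibers.

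\emph{Step 1: the deformation of $Y$.} First I show that the Poisson deformation functor $\mathrm{PD}_Y$ is unobstructed and pro-representable. Its tangent space is the truncated de Rham cohomology $H^2(Y,\Omega^{\geq 1}_Y)$. Since $Y \to X$ is a symplectic resolution of an affine cone, $H^i(Y,\mathcal{O}_Y)=0$ for $i>0$ by Grauert–Riemenschneider. Hence this group equals $H^2(Y;\mathbb{C})$, and the corresponding obstructions vanish by a $T^1$-lifting argument. The result is a formal universal Poisson deformation over $\widehat{H^2(Y;\mathbb{C})}$, and the period map identifies the base with $H^2(Y;\mathbb{C})$.

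\emph{Step 2: algebraization, and the deformation of $X$.} Here I use the good $\mathbb{C}^*$-action. Because $\omega$ has positive weight, the action lifts to the formal family and acts on the base with positive weights. This turns the formal family into an algebraic $\mathbb{C}^*$-equivariant family $\mathcal{Y}\to\mathcal{C}\cong H^2(Y;\mathbb{C})$. For $X$, I use $\mathrm{PD}_X\cong \mathrm{PD}_{U}$, where $U=X_{\mathrm{reg}}$ and $\operatorname{codim}(X\setminus U)\ge 4$. Since $Y$ and $U$ agree outside codimension $\geq 2$, this gives a natural map $\mathrm{PD}_Y\to\mathrm{PD}_X$. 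Flops between the different crepant resolutions produce a finite group $W$ acting on $H^2(Y;\mathbb{C})$. I then identify $\mathrm{PD}_X$ with the $W$-invariants, obtaining $\mathcal{B}=\mathcal{C}/W$, which is again an affine space by Chevalley–Shephard–Todd. The conical structure once more upgrades the formal family over $\mathcal{B}$ to an algebraic one $\mathcal{X}\to\mathcal{B}$.

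\emph{Step 3: the simultaneous resolution.} Taking relative $\operatorname{Spec}$ of $\Gamma(\mathcal{Y},\mathcal{O})$ over $\mathcal{C}$ gives a contraction $\nu$. Its target is a Poisson deformation of $X$, so by universality it is pulled back from $\mathcal{X}$ along the classifying map $q$; comparing tangent spaces shows that $q$ is the quotient by $W$. This produces diagram \eqref{diagram:Yuni, Xuni}.

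\emph{Step 4: the fibers.} Each $\nu_c$ is projective and birational, and $\mathcal{Y}_c$ is symplectic, so $\nu_c$ is crepant. For generic $c$, the fiber $\mathcal{Y}_c$ is affine, because no class in $H^2(Y)$ stays algebraic in the sense of spanning a curve. Equivalently, the discriminant is a union of hyperplanes $\{c: c\cdot[C]=0\}$ over curve classes $[C]$. So $\nu_c$ is an isomorphism off these hyperplanes.

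I expect the main obstacle to be the comparison $\mathrm{PD}_X \cong \mathrm{PD}_Y/W$ in Step 2, in particular the finiteness of the group and the fact that $q$ is a Galois quotient. Here I would follow Namikawa's argument and reduce to codimension-two slices, which are surface ADE singularities.
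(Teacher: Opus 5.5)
The paper gives no proof of this theorem; it only cites Namikawa, so the comparison here is with his argument. Your Steps 1 and 4 essentially follow him and are fine up to routine details. Steps 2--3, however, misidentify the finite group and do not prove that $q$ is a Galois quotient.

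\textbf{Step 2: the group $W$.} The deck group of $q$ is not produced by flops. In Namikawa's work it is $W=\prod_B W_B$, where $B$ runs over the codimension-two strata of $\mathrm{Sing}(X)$.
\begin{itemize}
\item Along such a stratum, $X$ is locally (ADE surface singularity)$\times$(smooth germ).
\item $W_B$ is the corresponding Weyl group, folded by the monodromy along $B$. It is generated by reflections attached to the exceptional divisors of $Y$ lying over $B$.
\item Flops correspond instead to walls in the \emph{interior} of the movable cone. A flopped resolution $Y'$ gives a second simultaneous resolution of the same $\mathcal{X}'$ over the same $\mathcal{C}$, not a deck transformation of $q$.
\end{itemize}
The two phenomena are independent. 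For $X=\mathbb{C}^2/\Gamma$ there are no flops, yet $q$ is the Brieskorn--Slodowy map $\mathfrak{h}\to\mathfrak{h}/W$. For $X=\overline{\mathcal{O}_{\mathrm{min}}}\subset\mathfrak{sl}_n$ with $n\ge 3$ there is the Mukai flop $T^*\mathbb{P}^{n-1}\dashrightarrow T^*(\mathbb{P}^{n-1})^{\vee}$, yet $q$ is an isomorphism. Your appeal to Chevalley--Shephard--Todd also requires $W$ to be a reflection group, which you have not shown for a group generated by flops.

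The auxiliary claims in Step 2 fail exactly when $W\neq 1$.
\begin{itemize}
\item $\operatorname{codim}(X\setminus X_{\mathrm{reg}})\ge 4$ is false as soon as codimension-two strata exist, already for $\mathbb{C}^2/\Gamma$.
\item Over such strata the exceptional locus of $Y\to X$ is a divisor, so $Y$ and $X_{\mathrm{reg}}$ do not agree off codimension two.
\item If $\operatorname{codim}\mathrm{Sing}(X)\ge 4$ did hold, one would simply get $\mathcal{C}\cong\mathcal{B}$.
\end{itemize}
The map $\mathrm{PD}_Y\to\mathrm{PD}_X$ should instead be defined by $\pi_*$, using $R^1\pi_*\mathcal{O}_Y=0$, which is in effect what you do in Step 3.

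\textbf{Step 3: why $q$ is the quotient by $W$.} Comparing tangent spaces cannot establish this. The differential $dq_0$ kills the directions of the exceptional-divisor classes over codimension-two strata. For $\mathbb{C}^2/\Gamma$ it vanishes identically, since a Weyl group acting on $\mathfrak{h}$ has no invariant linear forms, so all fundamental invariants have degree $\ge 2$.

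What is actually needed is the following.
\begin{itemize}
\item[(i)] Finiteness of $q$. This follows once one proves $q^{-1}(0)=\{0\}$, using the positive $\mathbb{C}^*$-weights on $\mathcal{C}$ and $\mathcal{B}$.
\item[(ii)] Identification of the Galois group of $q$ with $W$. Namikawa obtains this by localizing at generic points of the codimension-two strata and reducing to the Kleinian (Brieskorn--Slodowy) case, twisted by monodromy.
\end{itemize}
Your closing sentence about codimension-two ADE slices points toward (ii). But that is the real content of the theorem, and it contradicts the flop-based description of $W$ on which your Step 2 relies.
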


\begin{remark}\label{rem:affinization of Ycal}
    We refer to the Poisson deformation $\mathcal{X}'$ as the \textit{affinization} of the universal Poisson deformation $\mathcal{Y}$ of $Y$.
\end{remark}

It is known that the good $\mathbb{C}^*$-action on a conical symplectic variety can be extended to the universal Poisson deformation:
\enlargethispage{2\baselineskip} 
\begin{theorem}[Namikawa \cite{namikawa2026notes}, {\cite[Prop.A.7]{Namikawa08}}] \label{Namikawa Thm: C^*-action on Yuni}
    Let $(X,\omega)$ be a conical symplectic variety.
    Assume that $X$ admits a crepant resolution $Y$. Let $\mathcal{X}\rightarrow\mathcal{B}$ and $\mathcal{Y}\rightarrow\mathcal{C}$ be the universal Poisson deformations of $X$ and $Y$, respectively.
    Then, the $\mathbb{C}^*$-action $\lambda$ on $X$ uniquely extends to a $\mathbb{C}^*$-action $\tilde{\lambda}$ on the deformation $\mathcal{X}$. Moreover, this $\mathbb{C}^*$-action $\tilde{\lambda}$ naturally lifts to a $\mathbb{C}^*$-action $\tilde{\lambda}_Y$ on the deformation $\mathcal{Y}$ making the diagram (\ref{diagram:Yuni, Xuni}) commutative.
    Furthermore, the $\mathbb{C}^*$-action induced by $\tilde{\lambda}_Y$ on the base space $\mathcal{C}$ is scalar multiplication of weight $2=\mathrm{wt}(\omega)$.
\end{theorem}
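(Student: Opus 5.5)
We would prove this in two stages: first extend the $\mathbb{C}^*$-action to the universal Poisson deformation $\mathcal{X}\to\mathcal{B}$ of $X$, then lift it to $\mathcal{Y}\to\mathcal{C}$.

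\textbf{Extension to $\mathcal{X}$.} Since $X$ is affine with positively graded coordinate ring, Poisson deformations of $X$ are governed by the graded Poisson cohomology $HP^2(X)$. The Poisson bracket has degree $-2$ because $\mathrm{wt}(\omega)=2$. For $t\in\mathbb{C}^*$, pulling back a Poisson deformation $(\mathcal{X}_A,\{\,,\})$ along $\lambda_t$ and rescaling the bracket by $t^{-2}$ gives another Poisson deformation of $X$ over the same base. By the universal property of $\mathcal{X}$, this induces a unique automorphism of the pair $(\mathcal{X},\mathcal{B})$ covering an automorphism of $\mathcal{B}$. Uniqueness forces these automorphisms to satisfy the cocycle condition, so they assemble into an action $\tilde\lambda$.

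To get an algebraic (not only formal) action, we would first work formally. The pro-representable hull $\hat{\mathcal{X}}\to\operatorname{Spf}\hat{\mathcal{B}}$ carries a compatible $\mathbb{C}^*$-action with positive weights on the base. Because all weights are positive, the formal family is graded, and taking $\mathbb{C}^*$-finite vectors algebraizes it to $\mathcal{X}\to\mathcal{B}$ with $\mathcal{B}$ an affine space with positive weights. Uniqueness of the extension comes from the uniqueness in the universal property.

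\textbf{Lift to $\mathcal{Y}$.} The same argument applied to $Y$ acts on formal Poisson deformations of $Y$, which are unobstructed with tangent space $H^2(Y;\mathbb{C})$. This gives a $\mathbb{C}^*$-action on $\hat{\mathcal{Y}}\to\hat{\mathcal{C}}$. We would algebraize it using that $Y\to X$ is projective and $\mathbb{C}^*$-equivariant, with a relatively ample equivariant line bundle. Compatibility with $\nu$ and with $q$ follows from the functoriality of the map $\mathrm{PD}_Y\to\mathrm{PD}_X$ together with uniqueness.

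\textbf{Weight on the base.} The period map $\mathcal{C}\to H^2(Y;\mathbb{C})$ sends $c$ to the class $[\omega_c]$ of the relative symplectic form. Under $\tilde\lambda_{Y,t}$, the form pulls back to $t^2\omega$, and $\mathbb{C}^*$ acts trivially on cohomology since it is connected. Hence the induced action on $\mathcal{C}$ corresponds to multiplying the period by $t^2$, that is, scalar multiplication of weight $2$.

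\textbf{Main obstacle.} The hardest step is the algebraization: passing from the formal equivariant universal family to an honest $\mathbb{C}^*$-equivariant algebraic family over the whole vector space $\mathcal{C}$. This needs positivity of all weights, which holds because the base weight is $2>0$ and the grading of the coordinate ring is positive, together with the projectivity of $\nu$.
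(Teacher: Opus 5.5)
The overall route matches what the paper relies on, but there is a genuine gap in how you produce the formal action. The paper gives no proof of this theorem and cites Namikawa. Your architecture is Namikawa's, as the paper paraphrases it in the proof of Corollary~\ref{cor:algebraization of quat str}: an equivariant formal universal family, algebraized by taking $\mathbb{C}^*$-finite vectors, with the weight on $\mathcal{C}$ read off from the period map. Your weight computation is fine.

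\textbf{The gap.} Pro-representability of $\mathrm{PD}_X$ (or $\mathrm{PD}_Y$) makes the morphism of \emph{bases} unique, but not the isomorphism of total spaces. That isomorphism is determined only up to Poisson automorphisms of the deformation over the base that restrict to the identity on the central fibre, and there are many of these (Hamiltonian flows).

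Take the $A_1$ case, with $\mathcal{X}=\{xy=z^2+t\}$ and $\tilde\lambda_s(x,y,z,t)=(s^2x,s^2y,s^2z,s^4t)$. The derivation $\{tx,-\}$ is locally nilpotent, so $\phi=\exp\{tx,-\}$ is an algebraic Poisson automorphism over $\mathcal{B}$ that is the identity on $t=0$. Conjugation by $\tilde\lambda_s$ rescales $\{tx,-\}$ by $s^{4}$ (acting on functions), so $\phi$ does not commute with $\tilde\lambda$. Hence $\phi\circ\tilde\lambda\circ\phi^{-1}$ is a different $\mathbb{C}^*$-action extending $\lambda$ and inducing the same action on $\mathcal{B}$.

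Consequences:
\begin{itemize}
\item Lifts $\Lambda_t$ chosen separately for each $t$ are not canonical, and $\Lambda_s\Lambda_t\Lambda_{st}^{-1}$ can be a nontrivial element of this pro-unipotent group. So ``uniqueness forces the cocycle condition'' is false.
\item The same objection applies to your derivation of uniqueness of $\tilde\lambda$. That uniqueness holds only up to such conjugation, which is the sense in which the paper uses ``unique'' in Proposition~\ref{prop:quat str of formal deform}.
\item It also applies to your derivation of compatibility with $\nu$ and $q$.
\end{itemize}

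\textbf{What is missing: linear reductivity of $\mathbb{C}^*$.} Work inductively in $n$, with $A_n=\mathrm{PAut}(\eta_n;\mathrm{id}|_{\eta_{n-1}})$ a rational $\mathbb{C}^*$-module. The obstruction to correcting the lifts into an honest action lies in $H^2(\mathbb{C}^*,A_n)$, and the remaining ambiguity lies in $H^1(\mathbb{C}^*,A_n)$. Both groups vanish. This is the content of Rim's theorem on equivariant versal deformations, adapted to $\mathrm{PD}_X$. It is also exactly the mechanism the paper runs for $\mathbb{Z}/4\mathbb{Z}$ in step (3) of Proposition~\ref{prop:quat str of formal deform}, and invokes for $\mathbb{C}^*$ in Remark~\ref{remark:uniqueness of f_n^m, C^*-equiv ver}. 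You must also arrange that the lifts depend algebraically on $t$, so that $A_n$ really is a rational module and the cochains are regular.

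\textbf{Two further points.}
\begin{itemize}
\item Compatibility with $\nu$ and $q$ should come from the $\mathbb{C}^*$-equivariant form of universality (Lemma~\ref{lemma: C^* equiv embed to Xcal}), which again rests on reductivity, not on bare uniqueness.
\item ``The same argument applied to $Y$'' assumes that $\lambda$ lifts to $Y$ at all, and this needs its own argument. For example, $\mathrm{Pic}(Y)\cong H^2(Y;\mathbb{Z})$ is discrete, so the connected group fixes the class of a $\pi$-ample line bundle. The birational self-map of $Y$ over $\lambda_t$ is then biregular.
\end{itemize}
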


Next, we define a good triple on a Poisson deformation.

\begin{definition} \label{def:good triple on PD}
    Let $X$ be a conical symplectic variety with a good triple.
    Assume that a Poisson deformation $(\mathcal{X},\tilde{\omega})$ of $X$ admits a $\mathbb{C}^*$-action $\tilde{\lambda}$ acting with positive weight and a quaternionic structure $\tilde{j}$.
    We say that the triple $(\tilde{\lambda}, \tilde{\omega}, \tilde{j})$ is a \textit{good triple} on the Poisson deformation $\mathcal{X}$ if it satisfies the following three conditions:
    \begin{align*}
       \text{(1)  }& \tilde{\lambda}^*\tilde{\omega} = \lambda^2\tilde{\omega},\\
       \text{(2)  }& \tilde{j}^*\tilde{\omega}=\overline{\tilde{\omega}},\\
       \text{(3)  }& \tilde{j}\circ \tilde{\lambda}=\overline{\tilde{\lambda}}\circ \tilde{j}.
    \end{align*}
\end{definition}

In this section, we show the following proposition:

\begin{proposition}\label{prop: good triple on Yuni}
    Let $X$ be a conical symplectic variety with a good triple $(\lambda,\omega,j)$. Assume that $X$ admits a crepant resolution $Y$.
    Let $\mathcal{Y}\rightarrow\mathcal{C}$ be the universal Poisson deformation of the crepant resolution $Y$.
    Then, the affinization $\mathcal{X}'\rightarrow\mathcal{C}$ of $\mathcal{Y}$ admits a good triple $(\tilde{\lambda}', \tilde{\omega}', \tilde{j}')$ naturally induced from the good triple on $X$.
    Moreover, this good triple naturally lifts to a unique triple $(\tilde{\lambda}_Y, \tilde{\omega}_Y, \tilde{j}_Y)$ on $\mathcal{Y}$.
\end{proposition}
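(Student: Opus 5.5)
The plan is to build the triple on $\mathcal{X}'$ and $\mathcal{Y}$ component by component. The $\mathbb{C}^*$-action and the relative symplectic form are already available from Namikawa. The new ingredient is the quaternionic structure. We obtain it from the universal property of Poisson deformations, applied to the ``conjugate'' deformation.

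First, by Theorem~\ref{Namikawa Thm:CM and univ Poisson} the deformation $\mathcal{Y}\to\mathcal{C}$ carries a relative symplectic form $\tilde{\omega}_Y$. By Theorem~\ref{Namikawa Thm: C^*-action on Yuni} we get $\tilde{\lambda}_Y$ lifting $\tilde{\lambda}$, acting on $\mathcal{C}$ by weight-$2$ scalar multiplication. The identity $\tilde{\lambda}_Y^*\tilde{\omega}_Y=\lambda^2\tilde{\omega}_Y$ is checked by uniqueness. Both sides are relative symplectic forms for Poisson deformations pulled back along the base map $t\mapsto \lambda^2 t$, and they agree on the central fibre $Y$, where (1) holds. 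Since $\mathcal{Y}$ is universal and Poisson automorphisms of the universal family inducing the identity on the base and on $Y$ are trivial, they coincide. I would phrase this formally, over $\hat{\mathcal{C}}$, and then algebraize using the $\mathbb{C}^*$-equivariance, following Namikawa's method. The same data then descend to $\mathcal{X}'=\mathcal{X}\times_{\mathcal{B}}\mathcal{C}$, since $\mathcal{X}'=\operatorname{Spec}\Gamma(\mathcal{Y},\mathcal{O})$.

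Next comes the quaternionic structure. Lift $j$ from $X$ to $Y$: because $j$ is antiholomorphic and satisfies $j^*\omega=\bar\omega$, the conjugate variety $\overline{Y}$ is again a crepant resolution of $\overline{X}\cong X$ (via $j$). The crepant resolution used is the one fixed by the universal deformation, and I will assume (or check via the uniqueness of the universal family) that $j$ lifts to $j_Y:Y\to Y$. Consider the complex-conjugate family $\overline{\mathcal{Y}}\to\overline{\mathcal{C}}$ with form $\overline{\tilde{\omega}_Y}$. Composed with $j_Y$ on the central fibre, it is a Poisson deformation of $(Y,\omega)$. Universality gives a unique map of bases $\sigma:\overline{\mathcal{C}}\to\mathcal{C}$ and an isomorphism $\overline{\mathcal{Y}}\cong\sigma^*\mathcal{Y}$ extending $j_Y$. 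Reading this as a map on $\mathcal{Y}$ yields an antiholomorphic $\tilde{j}_Y$ with $\tilde{j}_Y^*\tilde{\omega}_Y=\overline{\tilde{\omega}_Y}$. Under the period map, $\sigma$ is the antilinear map on $H^2(Y;\mathbb{C})$ induced by $j_Y$.

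The relations $\tilde{j}_Y^2=\tilde{\lambda}_{Y,-1}$ and $\tilde{j}_Y\circ\tilde{\lambda}_Y=\overline{\tilde{\lambda}_Y}\circ\tilde{j}_Y$ then follow from the same uniqueness argument. In each case both sides are isomorphisms between the same pair of Poisson deformations, and they restrict to equal maps on $Y$ by the corresponding identities for the good triple on $X$, lifted to $Y$. Finally, $\tilde{j}_Y$ descends to $\tilde{j}'$ on $\mathcal{X}'$, compatibly with $q$ and with the action on $\mathcal{X}$ obtained by the same argument for $\mathcal{X}\to\mathcal{B}$.

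The main obstacle is the uniqueness and rigidity step: showing that an automorphism of the universal Poisson deformation which is trivial on $Y$ and on the base is trivial. This amounts to the vanishing of Poisson automorphisms, i.e.\ $H^0(Y,\Theta_Y)^{\text{Ham}}$ issues and $H^1(Y,\mathcal{O}_Y)=0$ together with the symplectic form's rigidity. A related difficulty is passing from formal to actual deformations for antiholomorphic maps. I would first prove everything on the formal completion, order by order, using that the universal formal Poisson deformation is pro-representable with $H^1(Y,\mathcal{O}_Y)=0$, which holds because $Y$ is a crepant resolution of a rational singularity. I would then use $\mathbb{C}^*$-equivariance (relation (3)) to show that $\tilde{j}_Y$ preserves the graded pieces, which lets the formal structure extend to the algebraic family $\mathcal{Y}\to\mathcal{C}$.
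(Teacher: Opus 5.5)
\textbf{Main gap: the lift $j_Y$.} The gap is the step you set aside: the existence of an antiholomorphic lift $j_Y\colon Y\to Y$ of $j$. Universality cannot produce it. Universality only classifies deformations of the fixed resolution $Y$, whereas $\overline{Y}\to\overline{X}$, transported by $j$, is a priori just \emph{some} crepant resolution of $X$, and it can be a non-isomorphic one.

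Here is an example. Take $X=\overline{\mathcal{O}_{\mathrm{min}}}\subset\mathfrak{sl}_3$, whose points are $A=vw^{T}$ with $w^{T}v=0$. Take $j(A)=-A^{*}$, the Cartan involution of the paper's nilpotent-cone example. It is also the quaternionic structure of the hyperkähler-quotient cone metric $\mathbb{H}^3/\!/\!/U(1)$, induced by $(a,b)\mapsto(\bar b,-\bar a)$. Let $Y=T^*\mathbb{P}^2$, which records $\operatorname{im}A$. Since $\operatorname{im}j(A)=[\bar w]$, let $\tfrac1k vw^{T}\to 0$ with $[v]$ fixed, for two non-proportional choices of $w$. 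The only candidate lift then has two different limits at the same point of the zero section, so it does not extend: $j$ interchanges $T^*\mathbb{P}^2$ with its Mukai flop $T^*(\mathbb{P}^2)^*$.

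Everything downstream in your plan rests on $j_Y$: the conjugate family as a deformation of $Y$, the base map $\sigma$, the relations, and even the descent to $\mathcal{X}'$. So an extra hypothesis such as ``$j$ lifts to $Y$'' is unavoidable.

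The paper works in the opposite order. It builds $\tilde{j}$ on $\mathcal{X}\to\mathcal{B}$ by order-by-order extension corrected via $H^2(\mathbb{Z}/4,A_n)=0$, then algebraizes through $\mathbb{C}^*$-eigenvectors, and pulls back along $q$. So its $\mathcal{X}'$ part needs no lift, and it holds in the example with $\tilde{j}'(A)=-A^{*}$. However, its final lift to $\mathcal{Y}$, by Hartogs across the codimension-$\ge 2$ exceptional locus of $\nu$, meets the same obstruction. Hartogs extends maps into affine targets such as $\mathcal{X}'$, not into $\mathcal{Y}$. The Mukai flop is exactly a birational map that is an isomorphism off codimension two yet does not extend.

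\textbf{Secondary gap: rigidity.} The rigidity you invoke for the relations is false as stated. Take a linear coordinate $t$ on $\mathcal{C}$ and a nonconstant $f\in\Gamma(\mathcal{Y},\mathcal{O})$. The $t$-adic flow $\exp(tX_f)$ of the relative Hamiltonian vector field is a nontrivial Poisson automorphism of the formal universal family. It is the identity on $Y$ and on the base. The vanishing $H^1(Y,\mathcal{O}_Y)=0$ is irrelevant, because these automorphisms come from $H^0(Y,\mathcal{O}_Y)$.

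What is true is equivariant rigidity. Every Poisson vector field on $Y$ is Hamiltonian, since $H^1(Y;\mathbb{C})=0$. The bracket has weight $-2$ and base coordinates have weight $2$. Hence a $\mathbb{C}^*$-invariant order-$n$ term $t^{\alpha}X_f$ with $|\alpha|=n\ge 1$ requires $\operatorname{wt}f=2-2|\alpha|\le 0$, and positivity of the grading then forces $X_f=0$.

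So, granted $j_Y$, you should define $\tilde{j}_Y$ through the $\mathbb{C}^*$-equivariant form of universality, intertwining $\lambda$ on the source with $\bar\lambda$ on the target. Then conditions (2) and (3) hold by construction, and $\tilde{j}_Y^2=\tilde{\lambda}_{Y,-1}$ follows from equivariant uniqueness.
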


The fact that the $\mathbb{C}^*$-action extends to the universal Poisson deformation is precisely the statement of Theorem~\ref{Namikawa Thm: C^*-action on Yuni}. In what follows, we show that the quaternionic structure extends to the universal Poisson deformation in a similar manner.

\medskip
First, we show that the quaternionic structure extends to the formal universal Poisson deformation of a conical symplectic variety.
For this proof, we recall the following lemma:

\begin{lemma} \label{lemma:vanishing of group cohomology}
    Let $G$ be a finite group.
    Then, the higher group cohomology of any $\mathbb{C}[G]$-module $M$ vanishes.
    That is, for any $i\in\mathbb{Z}_{>0}$,
    \[H^i(G,M)=0.\]
\end{lemma}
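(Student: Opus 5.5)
The plan is the standard averaging argument: since $|G|$ is invertible in $\mathbb{C}$, the averaging operator $\frac{1}{|G|}\sum_{g\in G} g$ is available. I will compute $H^i(G,M)$ with the standard (inhomogeneous) cochain complex $C^\bullet(G,M)$ and build an explicit contracting homotopy in positive degrees. Alternatively, one can note that $\mathbb{C}[G]$ is semisimple by Maschke's theorem. Then every $\mathbb{C}[G]$-module is projective and injective, so $H^i(G,M)=\Ext^i_{\mathbb{C}[G]}(\mathbb{C},M)=0$ for $i>0$. I will present the cochain-level argument, because it is self-contained and works for arbitrary (possibly infinite-dimensional) $M$.

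\textbf{Step 1 (contracting homotopy).} For $n\ge 1$, define $h\colon C^n(G,M)\to C^{n-1}(G,M)$ by
\[
(hf)(g_1,\dots,g_{n-1})=\frac{1}{|G|}\sum_{g\in G} f(g_1,\dots,g_{n-1},g).
\]
I will use the inhomogeneous differential
\[
(df)(g_1,\dots,g_{n+1})=g_1f(g_2,\dots,g_{n+1})+\sum_{i=1}^{n}(-1)^i f(\dots,g_ig_{i+1},\dots)+(-1)^{n+1}f(g_1,\dots,g_n).
\]
The goal is the identity $dh+hd=\pm\,\mathrm{id}$ on $C^n$ for $n\ge1$. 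In the expansion of $hd f$, the last two summands are $(-1)^n f(g_1,\dots,g_{n-1},g_ng)$ and $(-1)^{n+1}f(g_1,\dots,g_n)$. Averaging over $g$, the reindexing $g\mapsto g_n^{-1}g$ turns the first into an average of $f(g_1,\dots,g_{n-1},g)$, and the second is independent of $g$. All other terms cancel against the corresponding terms of $dhf$. The result is $(dh+hd)f=(-1)^{n+1}f$.

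\textbf{Step 2.} Consequently every $n$-cocycle with $n\ge1$ satisfies $f=\pm d(hf)$, so it is a coboundary. Hence $H^n(G,M)=0$ for $n>0$.

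\textbf{Main obstacle.} The only delicate point is the sign bookkeeping in Step 1, in particular checking that the term $g_1 f(\dots)$ and the middle terms of $hdf$ match those of $dhf$ exactly. If that bookkeeping becomes messy, I will fall back to a shorter argument. Maschke's theorem shows that $\mathbb{C}$ is a direct summand of the free module $\mathbb{C}[G]$, via the splitting $1\mapsto \frac{1}{|G|}\sum_g g$, so $\mathbb{C}$ is a projective $\mathbb{C}[G]$-module. Therefore $\Ext^i_{\mathbb{C}[G]}(\mathbb{C},M)=0$ for all $i>0$ and all $M$. Only projectivity of the trivial module is needed here, so no finiteness assumption on $M$ is required.
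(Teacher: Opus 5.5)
Your proof is correct, but your main route differs from the paper's, and Step 1 has a sign slip. In the averaged expansion of $hdf$, every term except $(-1)^{n+1}f$ reproduces the corresponding term of $dhf$ exactly. So these terms cancel in the difference, not the sum, and the identity you actually obtain is $(hd-dh)f=(-1)^{n+1}f$ on $C^n$, not $(dh+hd)f=(-1)^{n+1}f$. If you want a genuine contracting homotopy, use $h'=(-1)^n h$ on $C^n$, which gives $dh'+h'd=\mathrm{id}$ in positive degrees. Step 2 is unaffected: for a cocycle one still gets $f=(-1)^n d(hf)$.

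The paper's proof is your fallback argument. Maschke's theorem makes $\mathbb{C}[G]$ semisimple, so every module is projective, and $H^i(G,M)\simeq\Ext^i_{\mathbb{C}[G]}(\mathbb{C},M)=0$. Your version is slightly sharper. The paper phrases the step as projectivity of $M$, but the vanishing of $\Ext^i(\mathbb{C},-)$ really rests on projectivity of the first argument $\mathbb{C}$, which your splitting $1\mapsto\frac{1}{|G|}\sum_g g$ exhibits directly. Both routes handle infinite-dimensional $M$, so that is not a distinguishing feature.

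What the cochain route buys is an explicit primitive for each cocycle, without identifying group cohomology with $\Ext$. It also uses only that $|G|$ acts invertibly on $M$, never $\mathbb{C}$-linearity of the action. So it applies verbatim when $G$ acts only $\mathbb{R}$-linearly, e.g.\ by conjugation with an anti-holomorphic map as in the proof of Proposition~\ref{prop:quat str of formal deform}; there the Maschke argument would have to be rerun over $\mathbb{R}[G]$, where it also works. The paper's route is shorter and more conceptual, at the price of quoting Maschke's theorem and the $\Ext$ description of group cohomology.
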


\begin{proof}
    By Maschke's theorem, the group ring $\mathbb{C}[G]$ is semisimple.
    Therefore, any $\mathbb{C}[G]$-module $M$ is a projective module.
    This implies that for any $i\in\mathbb{Z}_{>0}$,
    \begin{equation*}
        H^i(G,M)\simeq \Ext_{\mathbb{C}[G]}^i(\mathbb{C}, M)=0. \qedhere
    \end{equation*}
\end{proof}

We prove the following lemma on the uniqueness of automorphisms on formal Poisson deformations:
\begin{lemma}\label{lemma:uniqueness of f_n^m}
    Let $X$ be a conical symplectic variety.
    Let $\{f_n\}_n$ and $\{f_n'\}_n$ be two Poisson automorphisms on a formal Poisson deformation $\{(X_n\rightarrow S_n,\omega_n)\}_n$ of $X$. Assume they satisfy the following three conditions:
  \begin{enumerate}
   \item They agree on the central fiber $X$. That is, $f_0=f'_0$.
   \item They agree on the base space $S_n$ for each $n$. That is, for the projection $\pi_n:X_n\rightarrow S_n$, we have $\pi_n\circ f_n = \pi_n\circ f_n'$.
   \item There exists a natural number $m$ such that $f_n^m=\mathrm{id}={f_n'}^m$.
  \end{enumerate}
    Then, the two morphisms $\{f_n\}_n$ and $\{f_n'\}_n$ are conjugate.
\end{lemma}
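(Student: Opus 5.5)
The plan is to build the conjugating automorphism order by order, using Lemma~\ref{lemma:vanishing of group cohomology} to kill the obstruction at each step. Set $G=\mathbb{Z}/m\mathbb{Z}$. By condition (3), $\{f_n\}$ and $\{f'_n\}$ define two actions of $G$ on the formal Poisson deformation. These actions agree on the central fiber by (1) and on the base $S_n$ by (2).

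I want compatible Poisson automorphisms $\{h_n\}_n$ over $S_n$, with $h_0=\mathrm{id}$ and $h_n\circ f_n\circ h_n^{-1}=f'_n$ for all $n$. I would construct them by induction on $n$. Here $S_n=\operatorname{Spec}\mathbb{C}[t]/(t^{n+1})$, or its multiparameter analogue with the maximal ideal $\mathfrak{m}$ in place of $(t)$.

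\textbf{Base case and setup.} At $n=0$ take $h_0=\mathrm{id}$. Suppose $h_{n-1}$ has been constructed. Choose any Poisson automorphism $\tilde h_n$ of $X_n$ over $S_n$ lifting $h_{n-1}$. Such a lift exists because $h_{n-1}$ is a Poisson automorphism of the restricted deformation, and I would argue it can be chosen to lift. Failing that, I can first replace $f'_n$ by $\tilde h_n^{-1}f'_n\tilde h_n$ after constructing the lifts along with the $h$'s. Either way, after conjugating I may assume $f_{n-1}=f'_{n-1}$.

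\textbf{The correction term.} Poisson automorphisms of $X_n$ over $S_n$ restricting to the identity on $X_{n-1}$ are of the form $\exp(\theta)$. Here $\theta$ lies in $M:=\Gamma(X,\Theta^{P}_X)\otimes(\mathfrak{m}^n/\mathfrak{m}^{n+1})$, the Poisson (i.e.\ symplectic) vector fields tensored with the kernel of $\mathcal{O}_{S_n}\to\mathcal{O}_{S_{n-1}}$. This set is a torsor under the additive group $M$, and composition corresponds to addition. Condition (2) ensures that $f'_n f_n^{-1}$ lies over the identity of $S_n$; together with $f_{n-1}=f'_{n-1}$ this gives
\[
f'_n=\exp(\theta)\circ f_n \quad\text{for some } \theta\in M.
\]
The group $G$ acts on $M$ by conjugation via $f_0$, which is $\mathbb{C}$-linear. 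Define $c(g):=f'^{\,g}_n f_n^{-g}\in M$. This is a $1$-cocycle for $G$ with values in $M$.

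\textbf{Conclusion via vanishing cohomology.} By Lemma~\ref{lemma:vanishing of group cohomology}, $H^1(G,M)=0$, so $c$ is a coboundary: $c(g)=\eta - g\cdot\eta$ for some $\eta\in M$. Then $h_n:=\exp(\eta)$, composed with the chosen lift, conjugates $f_n$ to $f'_n$ and restricts to $h_{n-1}$. Passing to the limit yields the formal conjugacy $\{h_n\}_n$.

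\textbf{Main obstacle.} The delicate point is ensuring that $M$ is an honest $\mathbb{C}[G]$-module to which the lemma applies. $M$ is infinite-dimensional, but the lemma holds for any module, so that is not the issue. The real work is to check carefully that the difference of two liftings is exactly an element of $M$ and that the cocycle identity holds. The first of these needs $X_n\to S_n$ flat, and it needs the vector fields to preserve $\omega_n$; for the latter, symplectic vector fields on $X_{\mathrm{reg}}$ are the relevant ones, using normality of $X$ with codimension $\ge 2$ singularities to extend. A secondary point is the existence of a Poisson lift of $h_{n-1}$. I would handle this by carrying the conjugated family, rather than $h_n$ itself, through the induction.
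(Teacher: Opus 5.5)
Your strategy is the paper's. You induct on $n$ and treat the Poisson automorphisms of $X_n$ over $S_n$ that are the identity on $X_{n-1}$ as a $\mathbb{Z}/m\mathbb{Z}$-module. You then form the $1$-cocycle $g\mapsto f_n'^{\,g}f_n^{-g}$ and kill it with Lemma~\ref{lemma:vanishing of group cohomology}. Your normalization is actually the consistent way to set this up: first conjugate so that $f_{n-1}=f'_{n-1}$, then let $G$ act on $M$ by conjugation through $f_n$. The paper's action ${}^\varepsilon u=f_n\circ u\circ f_n'^{-1}$ only lands in $A_n$ after that normalization, and its cocycle check silently switches to conjugation by $f_n$. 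One small correction: condition (2) does not say $f_n$ lies over the identity of $S_n$. So conjugation by $f_n$ acts on the factor $\mathfrak{m}^n/\mathfrak{m}^{n+1}$ too, not only ``via $f_0$''. This is harmless for the cohomology vanishing.

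The genuine gap is the step you left open: the existence of a Poisson automorphism $\tilde h_n$ of $X_n$ over $S_n$ restricting to $h_{n-1}$. Without it you cannot compare $f_n$ and $f'_n$ at level $n$ at all. Your fallback of ``carrying the conjugated family'' is circular, because conjugating $f'_n$ already requires an automorphism of $X_n$ restricting to $h_{n-1}$. The same problem recurs at every step, since $h_n=\exp(\eta)\circ\tilde h_n$ must in turn be lifted to $X_{n+1}$. Flatness and your description of $M$ do not give this lift; it is an extra input. The missing ingredient is Namikawa's theorem that the Poisson deformation functor $\mathrm{PD}_{(X,\omega)}$ of a conical symplectic variety is pro-representable. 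By the Schlessinger--Rim criterion, this is equivalent to the following lifting property: for every small extension $A'\to A$ and Poisson deformation $\xi'$ over $A'$, every Poisson automorphism of $\xi'|_A$ over $A$ that is the identity on $X$ lifts to a Poisson automorphism of $\xi'$. Since $h_{n-1}|_X=h_0=\mathrm{id}$ and $S_{n-1}\subset S_n$ factors into small extensions, this produces $\tilde h_n$. This is exactly what the paper invokes when it says $\theta_n$ can be chosen to extend $\theta_{n-1}$ because $\mathrm{PD}_{(X,\omega)}$ is pro-representable. With that input inserted, the rest of your argument goes through as written.
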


\begin{proof}
    To simplify the notation, we denote the Poisson deformation $(X_n,\omega_n)$ by $\eta_n$. Let
    \[A_n:=\mathrm{PAut}(\eta_n; \mathrm{id}|_{\eta_{n-1}})\]
    be the set of Poisson automorphisms of $\eta_n$ whose restrictions to $\eta_{n-1}$ are the identity.
    This set naturally forms a $\mathbb{C}$-vector space.
    Let $G$ be the cyclic group of order $m$, $\mathbb{Z}/m\mathbb{Z}=\langle\varepsilon:\varepsilon^m=1\rangle$.
    We prove the claim by induction on $n$.
    For $n-1$, assume that there exists $\theta_{n-1}\in A_{n-1}$ such that
    \[f_{n-1} = \theta_{n-1}^{-1} \circ f_{n-1}' \circ \theta_{n-1}.\]
    Then, the group $G$ acts on the vector space $A_n$ by defining ${}^\varepsilon u := f_n\circ u\circ {f_n'}^{-1} \in A_n$ for any $u\in A_n$.
    For $g=\varepsilon^k\in G$, if we set $f_n(g):=f_n^k$ and $f_n'(g) = {f_n'}^k$, then the action of an element $g$ on $u$ can be written as
    \[{}^gu:= f_n(g)\circ u\circ {f_n'}(g)^{-1}.\]
    For any $g\in G$, define an element $u_g \in A_n$ by
    \[u_g:= f_n(g)\circ {f_n'}(g)^{-1}.\]
    Then, $\{u_g\}_{g\in G}$ defines a $1$-cocycle for the group cohomology of the rational representation $A_n$ of $G$. Indeed, for any $g,h\in G$,
  \begin{align*}
    u_{gh} &= f_n(gh) \circ f_n'(gh)^{-1}\\
    &= f_n(g)\circ f_n(h) \circ f_n'(h)^{-1} \circ f_n'(g)^{-1}\\
    &= f_n(g)\circ u_h \circ f_n(g)^{-1}\circ u_g\\
    &= {}^gu_h \circ u_g = u_g \circ {}^gu_h \quad (\text{since } A_n \text{ is abelian}).
  \end{align*}
  Since $H^1(G,A_n)=0$ by Lemma \ref{lemma:vanishing of group cohomology}, $\{u_g\}_{g\in G}$ is a $1$-coboundary. Therefore, there exists an element $\theta_n \in A_n$ such that, in particular,
  $u_\varepsilon = {}^\varepsilon \theta_n \circ \theta_n^{-1}$.
  Rearranging this, it follows that $f_n$ and $f_n'$ are conjugate.
  Furthermore, since the Poisson deformation functor of $X$ is pro-representable, one can verify that we can choose $\theta_n$ to be an extension of $\theta_{n-1}$ (cf.~Proof of Prop.~\ref{prop:quat str of formal deform} (2)).
  Thus, the two morphisms $\{f_n\}_n$ and $\{f_n'\}_n$ on the formal Poisson deformation are conjugate.
\end{proof}

\begin{remark}\label{remark:uniqueness of f_n^m, C^*-equiv ver}
    In the assumption of Lemma \ref{lemma:uniqueness of f_n^m}, assume further that the formal Poisson deformation admits a $\mathbb{C}^*$-action and that the two automorphisms $\{f_n\}_n$ and $\{f_n'\}_n$ are equivariant with respect to this $\mathbb{C}^*$-action.
    By replacing the vector space $A_n$ used in the proof of Lemma \ref{lemma:uniqueness of f_n^m} with its $\mathbb{C}^*$-invariant subspace $A_n^{\mathbb{C}^*}$, a similar argument shows that $\{f_n\}_n$ and $\{f_n'\}_n$ are $\mathbb{C}^*$-equivariantly conjugate.
\end{remark}

With these preparations, we show the following proposition.

\begin{proposition}\label{prop:quat str of formal deform}
    Let $X$ be a conical symplectic variety with a good triple $(\lambda,\omega,j)$.
Then, the quaternionic structure $j$ on $X$ uniquely extends to a quaternionic structure $\{j_n\}_n$ on the formal universal Poisson deformation $\{(X_n,\omega_n)\}_n$ of $X$.
\end{proposition}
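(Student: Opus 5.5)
The plan is to build the extension in two stages. First, existence will come from universality applied to a ``conjugated'' Poisson deformation. Second, the relation $j_n^2=\lambda_{-1}$ will be enforced by the group-cohomology averaging of Lemma~\ref{lemma:uniqueness of f_n^m}.

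For existence, let $\{(X_n\to S_n,\omega_n)\}_n$ be the formal universal Poisson deformation, with $S_n=\operatorname{Spec} R/\mathfrak{m}^{n+1}$ and $R=\widehat{\mathcal{O}}_{\mathcal{B},0}$. Form the complex-conjugate family $\overline{X_n}\to\overline{S_n}$ with the form $\overline{\omega_n}$. Here $\overline{X_n}$ has the same underlying ringed space, but the structure sheaf carries the conjugated $\mathbb{C}$-algebra structure, so it is again a $\mathbb{C}$-scheme. Using $j\colon X\to X$, which is anti-holomorphic, we identify the central fiber of $\overline{X_n}$ with $X$ itself. Condition (2) of Definition~\ref{def: good triple on X}, $j^*\omega=\overline{\omega}$, says that this identification is Poisson. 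Hence $\{(\overline{X_n},\overline{\omega_n})\}_n$, with central fiber identified with $X$ via $j$, is a formal Poisson deformation of $(X,\omega)$. By universality (pro-representability, Namikawa) there is a unique classifying map $\overline{S_n}\to S_n$ together with Poisson isomorphisms $\overline{X_n}\simeq X_n\times_{S_n}\overline{S_n}$ compatible in $n$ and restricting to the identification above on the central fiber. Composing gives anti-holomorphic Poisson maps $j_n\colon X_n\to X_n$ with $j_n^*\omega_n=\overline{\omega_n}$, lifting $j$ and covering an anti-linear involution-type map on $S_n$. For uniqueness on the base we use that the universal base is $T^1$-type, i.e.\ naturally $H^2$-like. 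Uniqueness of the classifying map then shows that the induced map on $S_n$ is determined by $j$.

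Next I would arrange $j_n^2=\lambda_{-1}$ and compatibility with $\tilde\lambda$. The automorphism $j_n^2$ is a holomorphic Poisson automorphism of $\{X_n\}$ restricting to $j^2=\lambda_{-1}$ on $X$. On the base it agrees with $\tilde\lambda_{-1}$, by the uniqueness of the classifying map and Theorem~\ref{Namikawa Thm: C^*-action on Yuni}. Both $j_n^2$ and $\tilde\lambda_{-1}$ have finite order, since $j_n^4$ lifts the identity and is therefore trivial up to elements of $A_n$. This needs checking; I would first impose $\mathbb{C}^*$-equivariance via the weight decomposition.

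The step I expect to be hardest is then correcting $j_n$ inductively by an element $\theta_n\in A_n$ so that $j_n^2=\tilde\lambda_{-1}$ exactly. I would set this up as a $\mathbb{Z}/2$ (or $\mathbb{Z}/4$) cocycle problem. The cyclic group generated by an anti-linear element acts semilinearly on the $\mathbb{C}$-vector space $A_n$, and we need vanishing of $H^1$ for this twisted action. Lemma~\ref{lemma:vanishing of group cohomology} does not directly cover semilinear actions. However, $A_n$ is a real vector space, where averaging over the finite group (dividing by $|G|$) still kills $H^1$, so the argument of Lemma~\ref{lemma:uniqueness of f_n^m} and Remark~\ref{remark:uniqueness of f_n^m, C^*-equiv ver} goes through. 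Pro-representability lets the correction at level $n$ extend the one at level $n-1$.

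Uniqueness follows by the same mechanism. Two extensions differ by a holomorphic Poisson automorphism $j_n'\circ j_n^{-1}$ that is trivial on $X$ and on the base. Such an automorphism lies in the unipotent group built from the $A_k$ and is compatible with the finite-order conditions. The vanishing of group cohomology forces it to be a coboundary, giving uniqueness up to the natural conjugation; with $\mathbb{C}^*$-equivariance imposed, the weight-zero part, and hence the ambiguity, vanishes.
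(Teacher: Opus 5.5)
\textbf{There is a genuine gap, at the step you yourself flag, and it is the core of the proposition.}

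After the universality step you have an anti-holomorphic Poisson lift $j_n$ of $j_{n-1}$ with $j_n^2=v\circ\tilde\lambda_{-1}$ for some $v\in A_n$. You propose to finish with the conjugacy argument of Lemma~\ref{lemma:uniqueness of f_n^m}, applied to $j_n^2$ and $\tilde\lambda_{-1}$. That is an $H^1$ argument, and it needs both maps to have finite order, i.e.\ $j_n^4=\mathrm{id}$. Your reason ($j_n^4$ lifts the identity, hence is trivial up to $A_n$) only gives $j_n^4\in A_n$. A nonzero element of the vector space $A_n$ has infinite order, so finite order is exactly what remains unproved.

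Whether $j_{n-1}$ admits a lift with $j_n^4=\mathrm{id}$ at all is an obstruction problem. The group generated by $A_n$ and $j_n$ is an extension of the cyclic group $\langle j_{n-1}\rangle$ by $A_n$, and its class in $H^2$ is the obstruction. The paper's proof kills it with the $2$-cocycle $f(\sigma,\tau)=\sigma_n\tau_n(\sigma\tau)_n^{-1}$ and $H^2(\mathbb{Z}/4,A_n)=0$.

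Concretely, let $J$ be the semilinear conjugation action of $j_n$ on $A_n$, written additively. Then the equation $(j_n\theta)^2=\tilde\lambda_{-1}$ becomes $(J+J^2)\theta=-v$. This is solvable exactly when $v$ has no component in the $(-1)$-eigenspace of $J$. That condition does hold: $Jv-v$ corresponds to the commutator $\tilde\lambda_{-1}j_n\tilde\lambda_{-1}^{-1}j_n^{-1}\in A_n$, which is annihilated by $1+J^2$ because $\tilde\lambda_{-1}^2=\mathrm{id}$. Some such verification is the missing content. Once a lift with $j_n^4=\mathrm{id}$ is in hand, your $H^1$ comparison of $j_n^2$ with $\tilde\lambda_{-1}$ does complete the step.

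Your suggestion to impose $\mathbb{C}^*$-equivariance first would give another route, but it is only asserted. If each $j_n$ satisfied $j_n\tilde\lambda_c=\tilde\lambda_{\bar c}j_n$, then $j_n^2\tilde\lambda_{-1}^{-1}$ would lie in $A_n^{\mathbb{C}^*}$. You would still have to prove two things: that such an equivariant lift exists, and that this weight-zero part vanishes. The latter is the claim you make at the end without argument; it requires comparing the weights of Poisson derivations of $\mathcal{O}_X$ with those of $\mathfrak{m}^n/\mathfrak{m}^{n+1}$.

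Two smaller points:
\begin{enumerate}
\item Universality gives a unique classifying map, but not by itself isomorphisms compatible in $n$. The paper's step (2) arranges this by extending the Poisson automorphism $j_{n-1}\circ(j_n'|_{X_{n-1}})^{-1}$ to $X_n$.
\item You are right that the action on $A_n$ is only semilinear. The vanishing of group cohomology should therefore be applied to $A_n$ as a real vector space rather than through Lemma~\ref{lemma:vanishing of group cohomology} for $\mathbb{C}[G]$-modules. This tightens the paper's argument.
\end{enumerate}
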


\begin{proof}
    We represent the formal universal Poisson deformation $\{(X_n,\omega_n)\}_n$ of $X$ as follows:
    \begin{equation}\label{diagram:formal deform of X}
       \vcenter{
       \xymatrix{
          X_0 \ar[d]\ar[r] &X_1 \ar[d] \ar[r]^{} & \cdots \ar[r]& X_n \ar[d] \ar[r]&\cdots \\
          S_0 \ar[r]&S_1 \ar[r]^{} & \cdots \ar[r]&  S_n \ar[r] &\cdots \\
       }}
    \end{equation}
    where $(X_0,\omega_0)=(X,\omega)$.
    We prove the claim by induction on the subscript $n$. For $n-1$, assume that a quaternionic structure $j_{n-1}$ on $X_{n-1}$ has been obtained.
    We show the following four claims in order.

    \begin{enumerate}[wide=\parindent, itemsep=\medskipamount]
       \item \textit{Extension to the base space}: By the universality of $(X_n,\omega_n)$, we obtain morphisms $j_n'$ and $\sigma_n$ making the following diagram commutative:
       \begin{equation}\label{diagram:formal j_n}
          \vcenter{
          \xymatrix{
                X \ar@{^{(}->}[r] \ar[d]_{j} &X_n \ar[d]_{\exists  j'_n} \ar[r]^{} \ar@{}[dr]
                & S_n \ar[d]^{\exists ! \sigma_n} \\
                \overline{X} \ar@{^{(}->}[r]&\overline{X_n} \ar[r]^{} & \overline{S_n} \\
          }}
       \end{equation}
       Since the morphism $\sigma_n$ is uniquely determined, using the universality again and the fact that $(j^2)^*\omega = \omega$, we can show that $\sigma_n^2=\mathrm{id}_{S_n}$.
       That is, we obtain a real structure $\{\sigma_n\}$ on the base space $\{S_n\}$.
       Note that since the morphism $j_n'$ on the total space is not unique, it does not necessarily define a quaternionic structure in general.

       \item \textit{Existence of induced Poisson morphisms $\{j_n\}$ on the total space}:
       We show that by rechoosing the morphism $j_n'$, we can obtain a Poisson morphism $\{j_n\}$ on $\{X_n\}$.
       Let $j'_n$ be the morphism obtained from the diagram (\ref{diagram:formal j_n}). Then,
       $j_{n-1}\circ (j_n'|_{X_{n-1}})^{-1}$
       is a Poisson automorphism on $X_{n-1}$. Moreover, restricting this morphism to $X$ gives the identity. Therefore, since the Poisson deformation functor $\mathrm{PD}_{(X,\omega)}$ of $(X,\omega)$ is pro-representable, this morphism extends to a Poisson morphism $\psi$ on $X_n$.
       Thus, if we set
       $j_n:=\psi \circ j_n'$,
       then $j_n$ is a Poisson morphism extending $j_{n-1}$.

      \item \textit{Existence of quaternionic structure $\{j_n\}_{n}$}:
      To simplify the notation, we denote the Poisson deformation $(X_n,\omega_n)$ by $\eta_n$.
      Let
      \[
         A_n \coloneqq \mathrm{PAut}(\eta_n; \mathrm{id}|_{\eta_{n-1}})
      \]
      be the set of Poisson automorphisms of $\eta_n$ whose restrictions to $\eta_{n-1}$ are the identity.
      We denote the Poisson morphism induced by $j_n$ as $j_n:\eta_n\rightarrow \bar{\eta}_n$.
      By composing $j_n$ with the complex conjugation $\overline{(\cdot)}:\bar{\eta_n}\rightarrow \eta_n$, we obtain a Poisson morphism $\hat{j}_n :\eta_n\rightarrow \eta_n$.
      We aim to modify $j_n$ so that $\hat{j}_n \circ \hat{j}_n = -\mathrm{id}_{\eta_n}$.

      Let $G \coloneqq \mathbb{Z}/4\mathbb{Z} = \langle i : i^2 = -1 \rangle$.
      We define an action of $G$ on the vector space $A_n$ by
      \[
         {}^{i} u \coloneqq \hat{j}_n\circ u \circ \hat{j}_n^{-1} \quad \text{for } u \in A_n.
      \]
      Indeed, since $\hat{j}_{n-1}^2=-\mathrm{id}_{\eta_{n-1}}$, there exists an element $v\in A_n$ such that $\hat{j}_n^2=-v$. Since $A_n$ is abelian, we have
      \[
         {}^{i}({}^{i} u)=\hat{j}_n^2\circ u \circ \hat{j}_n^{-2} = (-v) \circ u \circ (-v)^{-1} = {}^{-1}u.
      \]

      Next, we define a $2$-cocycle. Let $1_n \coloneqq \mathrm{id}_{\eta_n}$ and $i_n \coloneqq \hat{j}_n$.
      For any $\sigma, \tau \in G$, we define $f(\sigma,\tau) \in A_n$ by
      \[
         f(\sigma,\tau) \coloneqq \sigma_n\circ\tau_n\circ(\sigma \tau)^{-1}_n.
      \]
      Then, $f$ satisfies the cocycle condition:
      \[
         f(\sigma\tau,\rho)\circ f(\sigma,\tau\rho)^{-1}\circ f(\sigma,\tau) = {}^{\sigma}f(\tau,\rho),
      \]
      where ${}^{\sigma}f(\tau,\rho) \coloneqq \sigma_n\circ f(\tau,\rho)\circ \sigma_n^{-1}$.
      Therefore, $f$ defines a class in $H^{2}(G, A_n)$.
      Since $H^{2}(G, A_n)= 0$ by Lemma \ref{lemma:vanishing of group cohomology},
      $f$ is a $2$-coboundary.
      Thus, there exists a $1$-cochain $\{u_\sigma\}_{\sigma\in G}$ satisfying
      \[
         f(\sigma,\tau) = {}^\sigma u_\tau \circ u_{\sigma\tau}^{-1}\circ u_{\sigma}.
      \]
      Moreover, since $f(-1,1)=\mathrm{id}$, it follows that $u_{-1}=\mathrm{id}$.
      Then, we have
        \[
        -\hat{j}_n\circ \hat{j}_n = f(i,i) = {}^i u_i\circ \mathrm{id} \circ u_i = \hat{j}_n\circ u_i\circ \hat{j}_n^{-1}\circ u_i.
        \]
        Therefore, if we define
        \[
        \tilde{j}_n \coloneqq j_n\circ u_i^{-1},
        \]
        it satisfies $\overline{\tilde{j}_n}\circ \tilde{j}_n = -\mathrm{id}_{\eta_n}$ and defines a quaternionic structure on $X_n$ extending the Poisson morphism $j_{n-1}$.

       \item \textit{Uniqueness of quaternionic structure}:
       Given two quaternionic structures $j_n$ and $j_n'$, let $f_n$ and $f_n'$ be their compositions with the complex conjugation $\overline{(\cdot)}:\bar{\eta_n}\rightarrow \eta_n$, respectively.
       By Lemma \ref{lemma:uniqueness of f_n^m}, $f_n$ and $f_n'$ are conjugate, which implies that $j_n$ and $j_n'$ are also conjugate.\qedhere
    \end{enumerate}
\end{proof}

\begin{remark} \label{rem:good triple on formal PD}
    By using the uniqueness of the $\mathbb{C}^*$-action $\{\lambda_n\}_n$ on the formal universal Poisson deformation $\{(X_n,\omega_n)\}_n$, we can rechoose the quaternionic structure $\{j_n\}_n$ so that it satisfies the compatibility conditions of a good triple on a Poisson deformation in Definition~\ref{def:good triple on PD}. We call the triple $\{(\lambda_n,\omega_n,j_n)\}_n$ thus obtained a \textit{good triple} on the formal universal Poisson deformation.
\end{remark}

Next, we show that by algebraizing the good triple on the formal universal Poisson deformation, we obtain the good triple on the universal Poisson deformation.

\begin{corollary}\label{cor:algebraization of quat str}
    Let $X$ be a conical symplectic variety with a good triple $(\lambda,\omega,j)$.
    Then, the good triple on $X$ naturally induces a unique good triple $(\tilde{\lambda}, \tilde{\omega}, \tilde{j})$ on the universal Poisson deformation $\mathcal{X}\rightarrow\mathcal{B}$ of $X$.
\end{corollary}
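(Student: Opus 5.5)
The plan is to algebraize the good triple $\{(\lambda_n,\omega_n,j_n)\}_n$ from Remark~\ref{rem:good triple on formal PD}, with the $\mathbb{C}^*$-action carrying the weight. Recall Namikawa's construction of $\mathcal{X}\to\mathcal{B}$. The formal universal Poisson deformation is pro-represented by the completion $\hat{\mathcal{O}}_{\mathcal{B},0}$. The coordinate rings $R_n:=\Gamma(X_n,\mathcal{O}_{X_n})$ carry compatible $\mathbb{C}^*$-actions with positive weights. Taking the direct sum of $\mathbb{C}^*$-eigenspaces in $\varprojlim R_n$ gives a finitely generated graded ring $R$ over the graded polynomial ring $\mathcal{O}(\mathcal{B})$, and $\mathcal{X}=\operatorname{Spec} R$. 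By Theorem~\ref{Namikawa Thm: C^*-action on Yuni}, $\tilde{\lambda}$ on $\mathcal{X}$ is the one induced by $\{\lambda_n\}$. The Poisson bracket of weight $-2$ likewise restricts to $R$ and gives $\tilde{\omega}$.

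First I would pass $j$ to the limit. The maps $j_n$ correspond to ring isomorphisms $j_n^\sharp : R_n \to \overline{R_n}$, that is, conjugate-linear automorphisms of $R_n$. These are compatible by construction, so they give a conjugate-linear automorphism $\hat{j}^\sharp$ of $\varprojlim R_n$. It lies over the real structure $\{\sigma_n\}$ on the base.

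The key step is to show that $\hat{j}^\sharp$ preserves the graded subring $R$. Condition (3), $j\circ\lambda=\bar{\lambda}\circ j$ at each level $n$, says that $\hat{j}^\sharp$ sends the $t^k$-eigenspace of $\lambda_t$ to the $t^k$-eigenspace. The sign conventions have to be checked against the way $\bar{\lambda}$ acts on $\overline{R_n}$, but the degree should be preserved because $t\mapsto \bar t$ conjugates the character $t^k$ to $\bar t^{\,k}$ and conjugate-linearity undoes it. So $\hat{j}^\sharp$ maps each eigenspace to itself, and hence maps $R=\bigoplus_k R^{(k)}$ to itself. The same argument applied to the base shows that $\sigma$ preserves $\mathcal{O}(\mathcal{B})$ and acts on the vector space $\mathcal{B}$ as a degree-preserving anti-linear involution.

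This gives an anti-holomorphic automorphism $\tilde{j}$ of $\mathcal{X}$ over $\sigma$. The identities $\tilde{j}^2=\tilde{\lambda}_{-1}$, $\tilde{j}^*\tilde{\omega}=\overline{\tilde{\omega}}$ and (3) hold on every $R_n$, so they hold on the dense subring $R\subset\varprojlim R_n$, and therefore on $\mathcal{X}$. Uniqueness follows from the uniqueness in Proposition~\ref{prop:quat str of formal deform}, understood up to the conjugacy given by Lemma~\ref{lemma:uniqueness of f_n^m}. Remark~\ref{remark:uniqueness of f_n^m, C^*-equiv ver} makes that conjugating automorphism $\mathbb{C}^*$-equivariant, so it also descends to $R$.

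I expect the main obstacle to be the eigenspace bookkeeping in the key step. Concretely, one must verify that ``$\mathbb{C}^*$-finite'' elements of $\varprojlim R_n$ are sent to $\mathbb{C}^*$-finite elements, and that the quaternionic relation $j^2=\lambda_{-1}$ is compatible with conjugate-linearity on each graded piece.
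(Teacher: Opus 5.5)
Your proposal is correct and follows essentially the same route as the paper: pass the compatible $j_n$ to a conjugate-linear automorphism of $\varprojlim P_n$, use $j\circ\lambda=\bar{\lambda}\circ j$ to see that it preserves Namikawa's subalgebra generated by $\mathbb{C}^*$-eigenvectors (the coordinate ring of $\mathcal{X}$), and restrict. Your weight computation showing that $f\mapsto\overline{f\circ j}$ preserves each eigenspace, and your use of the $\mathbb{C}^*$-equivariant form of the conjugacy lemma to make uniqueness descend from the formal deformation to $\mathcal{X}$, fill in details that the paper leaves implicit.
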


\begin{proof}
    By Proposition~\ref{prop:quat str of formal deform} and Remark~\ref{rem:good triple on formal PD}, the good triple on $X$ induces a good triple $\{(\lambda_n,\omega_n,j_n)\}_n$ on the formal universal Poisson deformation $\{X_n\}_n$ of $X$. Let $P_n$ be the coordinate ring of $X_n$, and let
    $P=\varprojlim P_n$
    be the projective limit with respect to $n$.
    The good triple $\{(\lambda_n,\omega_n,j_n)\}_n$ on $\{X_n\}_n$ induces the triple $(\tilde{\lambda}, \tilde{\omega}, \tilde{j})$ on $P$.
    According to Namikawa's proof of Theorem~\ref{Namikawa Thm: C^*-action on Yuni} (cf.~\cite[Prop.~A.7]{Namikawa08}), the coordinate ring $B$ of the universal Poisson deformation $\mathcal{X}$ is characterized as the finitely generated $\mathbb{C}$-algebra generated by the eigenvectors of the $\mathbb{C}^*$-action on the ring $P$. Here, an eigenvector $v$ of the $\mathbb{C}^*$-action on the ring $P$ is an element $v$ of $P$ such that there exists an integer $w$ satisfying $\lambda\cdot v = \lambda^w v$ for any $\lambda\in\mathbb{C}^*$.
    Since the quaternionic structure $\tilde{j}$ on $P$ is compatible with the $\mathbb{C}^*$-action $\tilde{\lambda}$, it preserves the subalgebra $B\subset P$. Therefore, by restricting the triple $(\tilde{\lambda}, \tilde{\omega}, \tilde{j})$ on $B$, we obtain the good triple on the universal Poisson deformation $\mathcal{X}$.
\end{proof}

With these preparations, we prove Proposition~\ref{prop: good triple on Yuni}.

\begin{proof}[Proof of Proposition~\ref{prop: good triple on Yuni}]
\textit{Restatement of the claim:
    Let $X$ be a conical symplectic variety with a good triple $(\lambda,\omega,j)$. Assume that $X$ admits a crepant resolution $Y$.
    Let $\mathcal{Y}\rightarrow\mathcal{C}$ be the universal Poisson deformation of the crepant resolution $Y$.
    Then, the affinization $\mathcal{X}'\rightarrow\mathcal{C}$ of $\mathcal{Y}$ admits a good triple $(\tilde{\lambda}', \tilde{\omega}', \tilde{j}')$ naturally induced from the good triple on $X$.
    Moreover, this good triple naturally lifts to a unique triple $(\tilde{\lambda}_Y, \tilde{\omega}_Y, \tilde{j}_Y)$ on $\mathcal{Y}$.}

    \medskip

    We give the proof below. By Corollary~\ref{cor:algebraization of quat str}, we obtain the good triple $(\tilde{\lambda}, \tilde{\omega}, \tilde{j})$ on the universal Poisson deformation $\mathcal{X}\rightarrow\mathcal{B}$ of $X$.
    The deformation $\mathcal{X}'\rightarrow\mathcal{C}$ is, by definition, the Poisson deformation obtained as the pullback of $\mathcal{X}\rightarrow\mathcal{B}$ via the quotient map $q:\mathcal{C}\rightarrow\mathcal{B}$ (cf.~Theorem~\ref{Namikawa Thm:CM and univ Poisson}).
    As seen in the proof (1) of Proposition~\ref{prop:quat str of formal deform}, the quaternionic structure $j$ induces a real structure on the base space $\mathcal{B}$. Thus, one can choose a real structure $\sigma_\mathcal{C}$ on $\mathcal{C}$ that is compatible with the quotient map $q$. With this choice, using the universality of the pullback of the deformation $\mathcal{X}'$, we obtain the good triple $(\tilde{\lambda}', \tilde{\omega}', \tilde{j}')$ on $\mathcal{X}'$.
    Next, we show that the good triple $(\tilde{\lambda}', \tilde{\omega}', \tilde{j}')$ on the deformation $\mathcal{X}'$ lifts to its simultaneous resolution $\mathcal{Y}\rightarrow\mathcal{C}$. By Theorem~\ref{Namikawa Thm:CM and univ Poisson}, the exceptional set of the simultaneous resolution $\nu:\mathcal{Y}\rightarrow \mathcal{X}'$ has codimension at least 2.
    Therefore, by Hartogs' extension theorem, it follows that the triple lifts to the deformation $\mathcal{Y}\rightarrow\mathcal{C}$.
\end{proof}

\begin{remark} \label{rem:C^* and real str on base sp}
    The triple on the deformation $\mathcal{Y}$ naturally induces scalar multiplication of weight 2 and a real structure on the base space $\mathcal{C}$ (cf.~Theorem~\ref{Namikawa Thm: C^*-action on Yuni} and the proof (1) of Prop.~\ref{prop:quat str of formal deform}).
\end{remark}

\begin{remark} \label{rem:triple on Y}
    By restricting the triple $(\tilde{\lambda}_Y, \tilde{\omega}_Y, \tilde{j}_Y)$ on $\mathcal{Y}$ to the central fiber $Y$, we obtain a triple $(\lambda_Y, \omega_Y, j_Y)$.
    Consequently, this implies that the good triple on $X$ naturally lifts to a triple on the crepant resolution $Y$.
\end{remark}

\section{Principal twistor model}\label{sec:PTM}

In this section, we first give the definitions of a twistor model, which serves as a candidate for a twistor space, and a principal twistor model obtained by generalizing it. Next, we show that when a conical symplectic variety with a good triple admits a crepant resolution, we can construct the principal twistor model $\mathcal{Y}(1)$ by using the $\mathbb{C}^*$-action to glue the universal Poisson deformation of the resolution.
Finally, we show the universality of the principal twistor model, stating that any algebraic twistor space corresponding to a hyperkähler metric with asymptotic behavior can be obtained by slicing the principal twistor model.

\subsection{Definitions}\label{sec:def of PTM}
After recalling the definition of a twistor space, we introduce the notions of a twistor model as a candidate for it, and a principal twistor model as a generalization of the latter.
We then explain that, given a principal twistor model, one can obtain a family of twistor models by slicing it along arbitrary real sections.

\medskip
First, we recall the definition of the twistor space of a hyperkähler manifold.

\begin{definition}[Twistor space]\label{def:twistor space}
    Let $n$ be a natural number. Consider the following four conditions for a $(2n+1)$-dimensional complex manifold $Z$:
    \begin{enumerate}
       \item[(T1)] $Z$ admits a holomorphic fibration structure $\pi:Z\rightarrow \mathbb{P}^1$.
       Here, a \textit{fibration} means a surjective submersion.

       \item[(T2)] $Z$ admits a real structure $\tau$ compatible with the antipodal map $\sigma_{\mathrm{ap}}(u)\coloneqq-\dfrac{1}{\bar{u}}$ on $\mathbb{P}^1$. That is, it satisfies $\pi\circ \tau = \sigma_{\mathrm{ap}} \circ \pi$.

       \item[(T3)] There exists a relative holomorphic symplectic $2$-form $\omega$ on $\pi$ twisted by $\pi^*\mathcal{O}(2)$. That is, $\omega \in \Gamma(Z, \wedge^2T'^*_{\pi}(2)).$
           Furthermore, $\omega$ is compatible with the real structure $\tau$:
           \[\tau^*\omega=\overline{\omega}.\]

       \item[(T4)] There exists a family of twistor lines $\{\ell_x\}_{x\in M}$ parameterized by a real $4n$-dimensional manifold $M$, and this family forms a foliation of $Z$.
           Here, a \textit{twistor line} is a holomorphic section $\ell$ of the fibration $\pi$ satisfying the following two conditions:
       \begin{enumerate}
           \item The section $\ell$ is a real section of $\pi$. That is, it satisfies $\tau\circ \ell = \ell \circ \sigma_{\mathrm{ap}}$.
           \item The normal bundle of $\ell$ in $Z$ is isomorphic to $N_{\ell/Z}\simeq \mathbb{C}^{2n}\otimes \mathcal{O}(1)$.
       \end{enumerate}
    \end{enumerate}
    \medskip
    When a $(2n+1)$-dimensional complex manifold $Z$ satisfies the above four conditions, we call the tuple $(Z,\omega,\tau,\{\ell_x\}_{x \in M})$ a \textit{twistor space}.
\end{definition}

\begin{remark}
    We sometimes abbreviate the tuple and simply call the complex manifold $Z$ a twistor space.
\end{remark}

The fundamental theorem on twistor spaces is as follows.
\begin{theorem} [Twistor correspondence \cite{HKLR}] \label{thm:twistor correspondence}
    Twistor spaces $(Z,\omega,\tau,\{\ell_x\}_{x \in M})$ are in natural one-to-one correspondence with hyperkähler structures $(M,g,I,J,K)$ on the parameter space $M$ of the family of twistor lines, up to isomorphism.
\end{theorem}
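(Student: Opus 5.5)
The plan is to prove the two directions of the correspondence in Theorem~\ref{thm:twistor correspondence} separately, following Hitchin--Karlhede--Lindstr\"om--Ro\v{c}ek, and then check that they are mutually inverse up to isomorphism.

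\emph{From a hyperkähler structure to a twistor space.} Starting from $(M,g,I,J,K)$, I will set $Z=M\times S^2$ as a smooth manifold. For $u\in S^2\simeq\mathbb{P}^1$, the point $u$ corresponds to the complex structure $I_u=aI+bJ+cK$ with $a^2+b^2+c^2=1$. The almost complex structure on $Z$ is $I_u\oplus I_{\mathbb{P}^1}$, and integrability follows from the Newlander--Nirenberg theorem, using that $I,J,K$ are parallel.

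The remaining twistor data are then defined explicitly:
\begin{itemize}
\item the projection $\pi$ gives (T1);
\item $\tau(x,u)=(x,\sigma_{\mathrm{ap}}(u))$ gives (T2), since $I_{\sigma_{\mathrm{ap}}(u)}=-I_u$;
\item writing $\omega_I,\omega_J,\omega_K$ for the Kähler forms, the form $\omega=(\omega_J+i\omega_K)+2u\,\omega_I-u^2(\omega_J-i\omega_K)$ is fibrewise $I_u$-holomorphic symplectic, quadratic in $u$, and hence a section of $\wedge^2T'^*_\pi(2)$; the reality condition $\tau^*\omega=\overline{\omega}$ is a direct check, giving (T3);
\item the horizontal sections $\ell_x=\{x\}\times\mathbb{P}^1$ are holomorphic and real, and their normal bundle is $T_xM\otimes\mathbb{C}$ twisted by the tautological quaternionic structure, which is $\mathbb{C}^{2n}\otimes\mathcal{O}(1)$, giving (T4).
\end{itemize}

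\emph{From a twistor space to a hyperkähler structure.} Since $N\simeq\mathbb{C}^{2n}\otimes\mathcal{O}(1)$ has $H^1(N)=0$, Kodaira's theorem shows that the twistor lines form a complete family of complex dimension $4n$. Its real points, those fixed by $\tau$, form $M$, and $T_xM\otimes\mathbb{C}=H^0(\ell_x,N)=\mathbb{C}^{2n}\otimes\mathbb{C}^2$.

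\begin{itemize}
\item \emph{Metric.} Restricting $\omega$ to $N$ gives a section of $\wedge^2N^*(2)$, hence a constant skew form $\epsilon$ on $\mathbb{C}^{2n}$. Tensoring with the standard form on $H^0(\mathcal{O}(1))=\mathbb{C}^2$ yields a complex symmetric form on $T_xM\otimes\mathbb{C}$. The reality condition $\tau^*\omega=\overline{\omega}$ makes it real, giving $g$.
\item \emph{Complex structures.} For each $u$, evaluation $H^0(N)\to N_{\ell(u)}$ has a kernel, namely the sections vanishing at $u$; declaring this kernel to be the $(0,1)$-space defines $I_u$.
\item \emph{Kähler condition.} Expanding $\omega$ along $\ell$ in $u$ recovers three closed real $2$-forms. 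They are closed because $\omega$ is closed on fibres and varies holomorphically. By Hitchin's lemma, closedness of the three Kähler forms gives integrability and parallelism, so the structure is hyperkähler.
\end{itemize}

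\emph{Inverses and the main obstacle.} I expect the main obstacle to be positivity: the signature of $g$ must be positive definite rather than merely nondegenerate. I would try to extract this from (T4) plus the foliation condition, which forces the real lines to be disjoint and to fill $Z$. Restricting to a small real slice, the definiteness should follow from the twistor lines being disjoint and from $\tau$ having no fixed points (since $\sigma_{\mathrm{ap}}$ has none); otherwise the form could be indefinite.

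Finally, I will show the two constructions are inverse up to isomorphism. Starting from $(Z,\omega,\tau,\{\ell_x\})$, the map $M\times\mathbb{P}^1\to Z$, $(x,u)\mapsto\ell_x(u)$, is a diffeomorphism because the lines foliate $Z$, and it intertwines all the data. In the other direction, the lines $\{x\}\times\mathbb{P}^1$ constructed from $M$ return the original metric by the explicit formula above.
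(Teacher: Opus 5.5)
\textbf{The positivity step fails.} Your outline is the standard Hitchin--Karlhede--Lindstr\"om--Ro\v{c}ek argument; the paper gives no proof of its own and only cites it. Most of it is fine: the Kodaira step, the constant skew form $\epsilon$, the kernels of the evaluation maps as $(0,1)$-spaces, the quadratic expansion of $\omega$ in $u$, and Hitchin's lemma.

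The genuine gap is the step you yourself call the main obstacle. Positive definiteness of $g$ cannot be extracted from (T1)--(T4). In particular it does not follow from disjointness of the real lines or from $\tau$ being fixed-point free. Those properties do not involve $\omega$ at all, whereas the signature of $g$ is read off from $\epsilon$, i.e.\ from $\omega$.

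Here is a concrete counterexample. Let $Z=\mathcal{O}(1)^{\oplus 4}$ carry the real structure $\hat{j}$ obtained by the $\mathbb{C}^*$-gluing construction from $j(z_1,z_2,z_3,z_4)=(\bar z_2,-\bar z_1,\bar z_4,-\bar z_3)$. Let $\omega_\pm$ be the relative forms induced by $dz_1\wedge dz_2\pm dz_3\wedge dz_4$; both satisfy $j^*\omega_\pm=\overline{\omega_\pm}$ and have weight $2$. Then $(Z,\omega_+,\hat{j})$ and $(Z,\omega_-,\hat{j})$ satisfy (T1)--(T4) with literally the same foliation by real sections. However, $\omega_-$ yields the flat metric of signature $(4,4)$ on $\mathbb{R}^8$.

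\textbf{What the reverse construction actually gives.} It can only produce a nondegenerate pseudo-hyperk\"ahler metric, of signature $(4p,4q)$, locally constant on $M$. An overall sign change of $\omega$ repairs the negative-definite case but not a mixed signature. This is exactly why the paper remarks, right after the theorem, that positive definiteness is not part of the definition of a twistor space.

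The fix is to drop the attempted positivity argument. Either read the correspondence with pseudo-hyperk\"ahler structures on the right-hand side, or impose definiteness as an extra open condition on the twistor side. Nothing else in your argument needs definiteness: the integrability of the $I_u$, Hitchin's lemma, and the inverse checks are all signature-independent.

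\textbf{Minor sign convention.} With the paper's $\sigma_2$, whose real sections have the form $a+itu+\bar a u^2$, your explicit $\omega$ is anti-real. So the ``direct check'' of $\tau^*\omega=\overline{\omega}$ needs $\omega$ replaced by $i\omega$, or else the HKLR sign convention on $\mathcal{O}(2)$.
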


\begin{remark}
    To simplify the description, we refer to the twistor space $Z$ corresponding to a hyperkähler structure $(M,g,I,J,K)$ simply as the twistor space $Z$ corresponding to the hyperkähler metric $g$, omitting the complex structures.
\end{remark}

\begin{remark}
    In the definition of a twistor space, we do not assume the positive definiteness of the induced metric. If necessary, one can switch the signature of the metric by changing the sign of the holomorphic symplectic form $\omega$.
\end{remark}

Next, we introduce the definition of a twistor model.
We define a twistor model to be a tuple obtained by removing the condition on twistor lines from the conditions for a twistor space:

\begin{definition}[Twistor model]\label{def:twistor model}
    Let $Z$ be a $(2n+1)$-dimensional complex manifold. If $Z$ satisfies conditions (T1), (T2), and (T3) in Definition~\ref{def:twistor space}, we call the tuple $(Z,\omega,\tau)$ a \textit{twistor model}.
\end{definition}

We introduce the definition of a principal twistor model obtained by generalizing the definition of a twistor model.

\begin{definition}[Principal twistor model]\label{def:PTM}
    Let $n$ be a natural number and $d$ a non-negative integer. Consider the following three conditions for a $(2n+d+1)$-dimensional complex manifold $P$:
    \begin{enumerate}
       \item[(P1)] $P$ admits a holomorphic fibration structure $\pi_P:P\rightarrow \mathbb{P}^1$. Moreover, for a complex $d$-dimensional vector space $\mathcal{C}$, there exists a holomorphic bundle map $\varphi$ to the total space of the vector bundle $\pi_{\mathcal{C}}:\mathcal{C}(2)=\mathcal{C}\otimes \mathcal{O}(2)\rightarrow \mathbb{P}^1$ making the following diagram commute:
       \begin{equation*}
           \xymatrix{
           P\ar[rr]^-{\varphi}\ar[dr]_-{\pi_P}&&\mathcal{C}(2)\ar[dl]^-{\pi_{\mathcal{C}}}\\
           &\mathbb{P}^1&
           }
       \end{equation*}

       \item[(P2)] $P$ admits a real structure $\tau_P$ compatible with the real structure $\sigma_2$ on the vector bundle $\mathcal{C}(2)$ (cf.~Example~\ref{ex:quat str on O(k)}). That is, the following diagram commutes:
       \begin{equation*}
           \xymatrix{
           P\ar[r]^-{\tau_P}\ar[d]_-{\varphi}&\overline{P}\ar[d]^-{\bar{\varphi}}\\
           \mathcal{C}(2)\ar[r]_-{\sigma_2}&\overline{\mathcal{C}(2)}
           }
       \end{equation*}

       \item[(P3)] There exists a relative holomorphic symplectic $2$-form $\omega_P$ on $\varphi$ twisted by $\pi_P^*\mathcal{O}(2)$ on $P$. That is, $\omega_P \in \Gamma(P, \wedge^2T'^*_{\varphi}(2)).$
           Furthermore, $\omega_P$ is compatible with the real structure $\tau_P$:
           \[\tau_P^*\omega_P=\overline{\omega_P}.\]

    \end{enumerate}

    When a $(2n+d+1)$-dimensional complex manifold $P$ satisfies the above three conditions, we call the tuple $(\varphi:P\rightarrow \mathcal{C}(2),\omega_P,\tau_P)$ a \textit{principal twistor model}.
\end{definition}

\begin{remark}
    We sometimes abbreviate the tuple and simply call the complex manifold $P$ a principal twistor model.
    Note that when $d=0$, the definition of a principal twistor model coincides with that of a twistor model.
\end{remark}

Next, we define a real section of the vector bundle $\mathcal{C}(2)$:

\begin{definition}\label{def:real section of C(2)}
    Consider the natural real structure $\sigma_2$ on the vector bundle $\mathcal{C}(2)\rightarrow\mathbb{P}^1$ (cf.~Example~\ref{ex:quat str on O(k)}). Let $\sigma_{\mathrm{ap}}$ be the antipodal map on $\mathbb{P}^1$.
    We say that a holomorphic section $s$ of the vector bundle $\mathcal{C}(2)$ is a \textit{real section} if the section $s$ is compatible with the real structures, that is,
    \[\sigma_2 \circ s = s \circ \sigma_{\mathrm{ap}}.\]
\end{definition}

The following is a fundamental proposition on principal twistor models. We state that, for any choice of a real section $s$ of the vector bundle $\mathcal{C}(2)$, we obtain a twistor model $Z_s$ by slicing the principal twistor model along the section $s$.
The proof of the following proposition is straightforward and thus omitted.
\begin{proposition}\label{prop:slicing of PT}
Let $(\varphi:P\rightarrow \mathcal{C}(2),\omega_P,\tau_P)$ be a principal twistor model.
For any real section $s\in H^0(\mathbb{P}^1,\mathcal{C}(2))^{\sigma_2}$ of $\pi_\mathcal{C}:\mathcal{C}(2)\rightarrow \mathbb{P}^1$, if we define
\begin{align*}
    \pi_s&\coloneqq s^*\varphi : Z_s\rightarrow \mathbb{P}^1,\\
    \omega_s&\coloneqq\omega_P|_{Z_s},\\
    \tau_s&\coloneqq\tau_P|_{Z_s},
\end{align*}
then the tuple $(Z_s,\omega_s,\tau_s)$ is a twistor model.
\end{proposition}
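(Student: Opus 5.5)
We define $Z_s$ as the fibre product of $\varphi:P\to\mathcal{C}(2)$ with the holomorphic map $s:\mathbb{P}^1\to\mathcal{C}(2)$, that is, $Z_s=\varphi^{-1}(s(\mathbb{P}^1))$, and then verify (T1)--(T3) of Definition~\ref{def:twistor space} in turn. The hypotheses give three things: $s$ is a holomorphic section of $\pi_{\mathcal{C}}$, so $s(\mathbb{P}^1)$ is a closed complex submanifold of $\mathcal{C}(2)$ of dimension $1$ and codimension $d$; $s$ is real; and $\varphi$ is a bundle map over $\mathbb{P}^1$.

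\emph{Smoothness and (T1).} We use the reading of (P1) and (P3) on which $\varphi$ is a holomorphic submersion. This is implicit in (P3), since the relative cotangent sheaf $T'^*_{\varphi}$ is supposed to carry a fibrewise symplectic form, which presupposes relative dimension $2n$ everywhere. Then $Z_s=\varphi^{-1}(s(\mathbb{P}^1))$ is a closed complex submanifold of $P$ of dimension $(2n+d+1)-d=2n+1$. The map $\pi_s:=\pi_P|_{Z_s}$ coincides with $s^{-1}\circ\varphi|_{Z_s}$, i.e.\ with the base change of $\varphi$ along $s$. Since surjective submersions are stable under base change, $\pi_s$ is a surjective submersion onto $\mathbb{P}^1$, which gives (T1).

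\emph{(T2).} By (P2) we have $\bar\varphi\circ\tau_P=\sigma_2\circ\varphi$. Realness of $s$ means $\sigma_2(s(\mathbb{P}^1))=s(\mathbb{P}^1)$. Hence $\tau_P$ maps $Z_s$ to itself, and its restriction $\tau_s$ is an antiholomorphic involution of $Z_s$. For the compatibility with $\sigma_{\mathrm{ap}}$, take $z\in Z_s$ with $\varphi(z)=s(u)$. Then $\varphi(\tau_s z)=\sigma_2(s(u))=s(\sigma_{\mathrm{ap}}(u))$, and therefore $\pi_s\circ\tau_s=\sigma_{\mathrm{ap}}\circ\pi_s$.

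\emph{(T3).} Since $Z_s$ is a union of fibres of $\varphi$, and $\pi_s$ restricted to $Z_s$ has the same fibres as $\varphi$, the relative tangent bundle satisfies $T'_{\pi_s}=T'_{\varphi}|_{Z_s}$. Moreover $\pi_P^*\mathcal{O}(2)|_{Z_s}=\pi_s^*\mathcal{O}(2)$. So restricting $\omega_P$ gives a section $\omega_s\in\Gamma(Z_s,\wedge^2T'^*_{\pi_s}(2))$, and it is fibrewise nondegenerate because $\omega_P$ is. The reality condition $\tau_s^*\omega_s=\overline{\omega_s}$ follows by restricting $\tau_P^*\omega_P=\overline{\omega_P}$ to $Z_s$, since $\tau_P$ preserves $Z_s$.

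\emph{Main obstacle.} The only delicate point is the first step: making sure that $\varphi$ is a submersion, so that $Z_s$ is a manifold and $\pi_s$ is a fibration. If (P1) is read as giving only a bundle map, this must be extracted from (P3). If it cannot be, the statement should be understood with the submersion hypothesis added; in the concrete case $P=\mathcal{Y}(1)$ it holds automatically, because $\mathcal{Y}\to\mathcal{C}$ is a smooth family.
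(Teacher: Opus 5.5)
\textbf{Verdict.} Your verification is the routine check that the paper omits as ``straightforward'', and the (T2) and (T3) parts are correct as written. There is one unjustified step, the surjectivity half of (T1), and the extra hypothesis you propose should be strengthened accordingly.

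\textbf{Submersivity from (P3).} Your extraction works, but state it via local freeness rather than fibre dimension. Constant fibre dimension does not give a submersion (consider $z\mapsto z^2$). What works is this: if $\Omega_\varphi=T'^*_{\varphi}$ is locally free of rank $2n=\dim P-\dim\mathcal{C}(2)$, then exactness of $\varphi^*\Omega_{\mathcal{C}(2)}\to\Omega_P\to\Omega_\varphi\to 0$ forces the first map to be injective on every fibre. That is, $d\varphi$ is onto everywhere.

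\textbf{The gap: surjectivity in (T1).} You write that ``surjective submersions are stable under base change''. This needs $\varphi$ to be surjective, at least onto $s(\mathbb{P}^1)$. Nothing in (P1)--(P3) gives this, and unlike submersivity it cannot be recovered from the other axioms. Here is a counterexample:
\begin{itemize}
\item Take a principal twistor model with $\varphi$ surjective (e.g.\ $\mathcal{Y}(1)$, see below), with $d\ge 1$.
\item Let $U\subset\mathcal{C}(2)$ be a $\sigma_2$-invariant disc bundle around the zero section. Then $\varphi^{-1}(U)$, with the restricted $\varphi,\omega_P,\tau_P$, again satisfies (P1)--(P3).
\item Take a nonzero real section $s$ and a point $u$ with $s(u)\neq 0$. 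For $R>0$ large, $Rs(u)\notin U$, so the fibre of $\pi_{Rs}$ over $u$ is empty and (T1) fails.
\end{itemize}
So the hypothesis to add is that $\varphi$ is a fibration in the same sense as $\pi_P$, i.e.\ a surjective submersion, not merely a submersion. This is presumably what the paper intends, since it calls $\varphi_Y(1)$ a fibre bundle.

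\textbf{Why it holds for the paper's model.} The map $\mathcal{Y}\to\mathcal{C}$ is smooth, so its image is open. The image contains $0$ and is $\mathbb{C}^*$-stable. Since $\mathbb{C}^*$ acts on $\mathcal{C}$ with positive weight, every point is moved into any neighbourhood of $0$, so the image is all of $\mathcal{C}$. Hence the glued map $\mathcal{Y}(1)\to\mathcal{C}(2)$ is a surjective submersion.
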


\begin{remark}
    The idea of constructing a twistor space by slicing a larger manifold, such as a principal twistor model, along a real section is also used in Santa-Cruz ('97) \cite{Santa-Cruz97} and Bielawski-Foscolo ('21) \cite{Bielawski-Foscolo21}.
\end{remark}

\subsection{$\mathbb{C}^*$-gluing construction} \label{subsec:C^*-gluing construction}
In this subsection, we explain the $\mathbb{C}^*$-gluing construction, which naturally constructs a holomorphic fiber bundle $M(1)$ over $\mathbb{P}^1$ from a complex manifold $M$ with a $\mathbb{C}^*$-action. We also show that the $\mathbb{C}^*$-action, quaternionic structure, and differential forms on $M$ naturally extend to the fiber bundle $M(1)$ in order.
Finally, we show that the principal twistor model $\mathcal{Y}(1)$ can be constructed by applying the $\mathbb{C}^*$-gluing construction to the triple on the universal Poisson deformation.

\subsubsection{Definition} \label{subsubsec:def of C^*-gluing}

Let $M$ be a complex manifold equipped with a $\mathbb{C}^*$-action. We define a complex manifold $M(1)$ obtained by gluing two copies of $M\times\mathbb{C}$ via the following gluing function $\psi$:
\begin{align}
    M\times\mathbb{C}^* &\overset{\psi}{\longrightarrow}M\times\mathbb{C}^*\notag\\
    (x,u)&\longmapsto \left(\dfrac{1}{u}\cdot x,\dfrac{1}{u}\right)\eqqcolon(x',v). \label{gluing fn}
\end{align}
By the natural projection to the second factor, $M(1)$ admits a holomorphic fiber bundle structure $M(1)\rightarrow \mathbb{P}^1$.
We call the above construction the \textit{$\mathbb{C}^*$-gluing construction}.

\begin{remark}
    The notation $M(1)$ implies that the gluing has weight $1$ with respect to the given $\mathbb{C}^*$-action.
    Note also that the same construction applies equally well even if $M$ is a complex variety with singularities.
\end{remark}

\begin{example}
    When $M$ is a vector space $V$, if we give a $\mathbb{C}^*$-action on $V$ as scalar multiplication of weight $k\in \mathbb{Z}$, then we have
    \[M(1)=V(k)\coloneqq V\otimes \mathcal{O}(k).\]
\end{example}

\subsubsection{Extension of $\mathbb{C}^*$-action}

We state that the $\mathbb{C}^*$-action on the complex manifold $M$ naturally induces two types of $\mathbb{C}^*$-actions on the fiber bundle $M(1)$. The proof of the following proposition is straightforward and thus omitted.
\begin{proposition}\label{prop:C^*- on M(1)}
    Let $M$ be an arbitrary complex manifold with a $\mathbb{C}^*$-action, and let $M(1)$ be the fiber bundle constructed by the $\mathbb{C}^*$-gluing construction. Let $\psi$ be the gluing function \eqref{gluing fn} for the fiber bundle $M(1)$, and let $(x,u)$ and $(x',v)$ denote the points on the two copies of $M\times\mathbb{C}$ associated with it. Then, the following two types of $\mathbb{C}^*$-actions are naturally induced on $M(1)$:
    \begin{enumerate}[leftmargin=5em]
       \item[\textit{Type 1} : ] For any $\lambda \in\mathbb{C}^*$,
          \begin{equation}
             \left\{
             \begin{aligned}
                \lambda \cdot (x,u) &= (\lambda \cdot x, \lambda^2u),\\
                \lambda \cdot (x',v)&= (\lambda^{-1} \cdot x', \lambda^{-2}v).
             \end{aligned} \label{eq:type 1}
             \right.
          \end{equation}

       \item[\textit{Type 2} : ] For any $\lambda \in\mathbb{C}^*$,
          \begin{equation}
             \left\{
             \begin{aligned}
                \lambda \cdot (x,u) &= (\lambda \cdot x, u),\\
                \lambda \cdot (x',v)&= (\lambda \cdot x', v).
             \end{aligned} \label{eq:type 2}
             \right.
          \end{equation}
    \end{enumerate}
\end{proposition}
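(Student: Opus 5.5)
The approach is to write each action in the two charts of $M(1)$ and check directly that it commutes with the gluing map $\psi$ of \eqref{gluing fn}. Concretely, $M(1)=(M\times\mathbb{C})_u\cup_\psi(M\times\mathbb{C})_v$. A pair of maps $F_u$ and $F_v$ on the two charts descends to a well-defined holomorphic map on $M(1)$ exactly when
\[\psi\circ F_u=F_v\circ\psi \quad\text{on } M\times\mathbb{C}^*.\]
Each map $F_u$, $F_v$ must also preserve the overlap $M\times\mathbb{C}^*$. Once this compatibility is established, the group-action axioms on $M(1)$ hold chartwise. They follow from those of the given action on $M$ together with the obvious ones on the $\mathbb{C}$-factor. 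Holomorphicity of $\mathbb{C}^*\times M(1)\to M(1)$ is also local, so it follows chartwise as well.

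\textbf{Type 2.} Take $(x,u)$ with $u\neq0$. On one side, $\psi(\lambda\cdot x,u)=(u^{-1}\lambda\cdot x,u^{-1})$. On the other, $\lambda\cdot\psi(x,u)=(\lambda u^{-1}\cdot x,u^{-1})$. These agree because $\mathbb{C}^*$ is abelian, so $u^{-1}\lambda=\lambda u^{-1}$. Hence the fiberwise action is well defined, and it covers the identity on $\mathbb{P}^1$.

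\textbf{Type 1.} On the overlap, $\psi(\lambda\cdot x,\lambda^2u)=(\lambda^{-2}u^{-1}\lambda\cdot x,\lambda^{-2}u^{-1})=(\lambda^{-1}u^{-1}\cdot x,\lambda^{-2}v)$. This equals the prescribed $v$-chart action $(\lambda^{-1}\cdot x',\lambda^{-2}v)$ applied to $(x',v)=(u^{-1}\cdot x,u^{-1})$. Both charts preserve $u\neq0$ and $v\neq0$ respectively, so the overlap is respected. The induced action on the base is $u\mapsto\lambda^2u$. This extends holomorphically over $u=\infty$, where it reads $v\mapsto\lambda^{-2}v$, so the projection $M(1)\to\mathbb{P}^1$ is equivariant. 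Writing $\mathbb{P}^1=\mathbb{P}(\mathcal{O}\oplus\mathcal{O})$, this is the action of weight $2$ on $\mathcal{O}(-1)$ coordinates.

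The only point needing care is the Type 1 bookkeeping, namely that the exponent $2$ on $u$ is exactly what makes the factor $\lambda^{-2}$ from $u^{-1}$ combine with $\lambda$ into $\lambda^{-1}$. It is therefore worth remarking that a base weight $k$ with $u\mapsto\lambda^k u$ would give the factor $\lambda^{1-k}$ on $x'$. The specific formula \eqref{eq:type 1} is then just the $k=2$ instance, matching the weight-$2$ base action of Theorem~\ref{Namikawa Thm: C^*-action on Yuni}. No genuine obstacle is expected; everything reduces to commutativity of $\mathbb{C}^*$.
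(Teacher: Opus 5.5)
Your proposal is correct and is the direct verification the paper intends; the paper omits this proof as straightforward. You check that each chartwise action preserves the overlap and commutes with $\psi$, and your computations are right for both types, e.g.\ $\psi(\lambda\cdot x,\lambda^2u)=(\lambda^{-1}\cdot x',\lambda^{-2}v)$ for Type 1, while the aside about $\mathbb{P}(\mathcal{O}\oplus\mathcal{O})$ is unnecessary but harmless.
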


Next, we state that a $\mathbb{C}^*$-equivariant morphism between two complex manifolds $M$ and $N$ naturally extends to a $\mathbb{C}^*$-equivariant bundle map on the fiber bundles. The proof of the following proposition is straightforward and thus omitted.

\begin{proposition}\label{prop: C^* equiv map on M(1)}
    Let $M$ and $N$ be arbitrary complex manifolds with $\mathbb{C}^*$-actions, and let $M(1)$ and $N(1)$ be the fiber bundles constructed by the $\mathbb{C}^*$-gluing construction. Given a $\mathbb{C}^*$-equivariant map $\varphi:M\rightarrow N$ between $M$ and $N$, a bundle map $\varphi(1):M(1)\rightarrow N(1)$ between the fiber bundles $M(1)$ and $N(1)$ is naturally defined by $(x,u)\mapsto (\varphi(x),u)$. Then, the bundle map $\varphi(1)$ is equivariant with respect to the $\mathbb{C}^*$-action of \textit{Type 1} \textup{(}resp. \textit{Type 2}\textup{)} on the fiber bundles $M(1)$ and $N(1)$.
\end{proposition}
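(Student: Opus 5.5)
We must show that $\varphi(1)$ is well defined and holomorphic, that it is a bundle map over $\mathbb{P}^1$, and that it intertwines both actions of Proposition~\ref{prop:C^*- on M(1)}. The strategy is to check everything chart by chart on the two standard trivializations $M\times\mathbb{C}$ and $N\times\mathbb{C}$, using only the equivariance identity $\varphi(\mu\cdot x)=\mu\cdot\varphi(x)$ for all $\mu\in\mathbb{C}^*$.

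\emph{Well-definedness.} On the first chart we set $\varphi(1)(x,u)=(\varphi(x),u)$, and on the second chart $\varphi(1)(x',v)=(\varphi(x'),v)$. These local maps are clearly holomorphic, and they commute with the projections to the $\mathbb{C}$-factor, so over $\mathbb{P}^1$ they are bundle maps. It remains to see that they glue, i.e.\ that $\psi_N\circ(\varphi\times\mathrm{id})=(\varphi\times\mathrm{id})\circ\psi_M$ on $M\times\mathbb{C}^*$. For $(x,u)$ with $u\neq0$:
\begin{itemize}
\item the left-hand side is $\bigl(u^{-1}\cdot\varphi(x),u^{-1}\bigr)$;
\item the right-hand side is $\bigl(\varphi(u^{-1}\cdot x),u^{-1}\bigr)$.
\end{itemize}
These agree by equivariance with $\mu=u^{-1}$. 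This is the only place where equivariance is genuinely needed, and it is essentially immediate.

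\emph{Equivariance.} We verify each type separately on each chart.
\begin{itemize}
\item Type 2 on the first chart: $\varphi(1)(\lambda\cdot x,u)=(\varphi(\lambda\cdot x),u)=(\lambda\cdot\varphi(x),u)$, which is $\lambda$ applied to $\varphi(1)(x,u)$. The second chart is identical.
\item Type 1 on the first chart: $\varphi(1)(\lambda\cdot x,\lambda^2u)=(\lambda\cdot\varphi(x),\lambda^2u)$, as required.
\item Type 1 on the second chart: the computation is the same with $\lambda^{-1}$ acting on $x'$ and $\lambda^{-2}$ on $v$.
\end{itemize}
The actions themselves are well defined on $M(1)$ and $N(1)$ by Proposition~\ref{prop:C^*- on M(1)}, and the two charts cover $M(1)$. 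Hence agreement on each chart gives global equivariance.

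There is no real obstacle here. The one step needing care is the compatibility of $\varphi(1)$ with the gluing $\psi$, and it reduces to a single application of equivariance. If $\varphi$ is only assumed equivariant as a map of varieties, the same computation applies verbatim, in line with the remark that the construction works for singular $M$.
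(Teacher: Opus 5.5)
Your proof is correct: the paper omits the argument as straightforward. Your chart-by-chart check (gluing compatibility via $\varphi(u^{-1}\cdot x)=u^{-1}\cdot\varphi(x)$, then direct verification of the Type 1 and Type 2 actions on each chart) is exactly the intended verification, though equivariance of $\varphi$ is not used only in the gluing step, since your Type 1 and Type 2 computations also invoke $\varphi(\lambda\cdot x)=\lambda\cdot\varphi(x)$.
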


\subsubsection{Extension of quaternionic structure}

We show that when a quaternionic structure is given on a complex manifold $M$ with a $\mathbb{C}^*$-action, a natural real structure is induced on the fiber bundle $M(1)$. First, we show the following proposition on anti-holomorphic maps.

\begin{proposition}\label{prop:anti-hol map on M(1)}
    Let $M$ and $N$ be two complex manifolds with $\mathbb{C}^*$-actions. Given an anti-holomorphic map $f:M\rightarrow \bar{N}$ that is compatible with the $\mathbb{C}^*$-actions in the following sense:
    For any $\lambda \in \mathbb{C}^*$,
    \[
    f(\lambda\cdot x)=\bar{\lambda}\cdot f(x).
    \]
    Then, a natural anti-holomorphic map $\hat{f}:M(1)\rightarrow \bar{N}(1)$ on $M(1)$ is determined as follows:
    \begin{equation}
       \left\{
       \begin{aligned}
          \hat{f}(x,u)&=\left(-\dfrac{1}{\bar{u}}\cdot f(x),-\dfrac{1}{\bar{u}}\right),\\
          \hat{f}(x',v)&=\left(-\dfrac{1}{\bar{v}}\cdot -f(x'),-\dfrac{1}{\bar{v}}\right),
       \end{aligned} \label{defeq:hat f}
       \right.
    \end{equation}
    where $-f$ denotes the composition with the action of $-1$, i.e., $-f\coloneqq-1\cdot f(x)$.

\end{proposition}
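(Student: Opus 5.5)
The plan is to check directly that the two local formulas in \eqref{defeq:hat f} are anti-holomorphic and agree on the overlap. Once they are compatible with the gluing function $\psi$ of \eqref{gluing fn}, they define a single anti-holomorphic map $\hat f: M(1)\to \bar N(1)$.

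\textbf{Step 1: local anti-holomorphicity and well-definedness.} On the chart $M\times\mathbb{C}$ with coordinates $(x,u)$, the first formula is built from three ingredients:
\begin{itemize}
\item the anti-holomorphic map $f$;
\item the anti-holomorphic function $u\mapsto -1/\bar u$;
\item the holomorphic action map $\mathbb{C}^*\times N\to N$.
\end{itemize}
Composing these shows that $(x,u)\mapsto\bigl(-\tfrac{1}{\bar u}\cdot f(x),-\tfrac1{\bar u}\bigr)$ is anti-holomorphic into $N\times\mathbb{C}^*$. The second formula is handled the same way on the other chart.

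The point $u=0$ needs care. There the image coordinate $-1/\bar u$ lies at $\infty$, so the image should be read in the second chart of $\bar N(1)$ with fibre coordinate $\bar v$-type. I will therefore show that the first formula, rewritten in the target's $(x',v)$-chart via $\psi$, becomes
\[
(x,u)\mapsto\bigl(-\bar u\cdot(-\tfrac1{\bar u})\cdot f(x),\,-\bar u\bigr)=\bigl(f(x),-\bar u\bigr)
\]
up to the sign twist coming from the action of $-1$. This expression extends smoothly and anti-holomorphically across $u=0$.

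\textbf{Step 2: compatibility with $\psi$.} Take a point $(x,u)$ with $u\in\mathbb{C}^*$, identified with $(x',v)=(u^{-1}\cdot x,u^{-1})$. I will compute $\hat f(x',v)$ using the second formula and the equivariance $f(\lambda\cdot x)=\bar\lambda\cdot f(x)$:
\[
\hat f(x',v)=\bigl(-u\cdot(-1)\cdot f(u^{-1}\cdot x),-u\bigr)=\bigl((-u)(-1)\bar u^{-1}\cdot f(x),-u\bigr).
\]
I will then check that this equals the $\psi$-image of $\bigl(-\tfrac1{\bar u}\cdot f(x),-\tfrac1{\bar u}\bigr)$. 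Here the fibre coordinate is inverted, so the action factor is multiplied by $(-1/\bar u)^{-1}=-\bar u$. The fibre coordinates should match, provided the target gluing is applied with the conjugate variable, as the notation $\bar N(1)$ indicates.

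\textbf{Main obstacle.} The delicate step is the bookkeeping of signs and conjugations. One must confirm that the factor $-1$ inserted in the second chart, namely $-f$, exactly compensates the discrepancy $\lambda$ versus $-\lambda$ that arises because the antipodal map $u\mapsto -1/\bar u$ is not simply inversion. This requires verifying that $\mathbb{C}^*$ acts through a homomorphism, so that $(-1)\cdot(-1)\cdot$ and the scalars combine correctly, and that one fixes consistently which copy of $M\times\mathbb{C}$ in $\bar N(1)$ receives each chart. I would first do the computation for $M=N=V$ with scalar action of weight $k$, where the conclusion should recover the standard real structure $\sigma_k$ on $V(k)$. That calibrates the conventions, and the general case then follows by the same identities using only equivariance of $f$.
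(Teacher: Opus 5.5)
Your strategy is the same as the paper's. The paper's proof is the single identity $\hat f\circ\psi_M(x,u)=(f(x),-\bar u)=\psi_N\circ\hat f(x,u)$ on $M\times\mathbb{C}^*$, and your Step~1 computation $\psi_N\circ\hat f(x,u)=(f(x),-\bar u)$ is one half of it. There is no ``sign twist'' there, since $(-\bar u)\cdot(-1/\bar u)=1$ exactly.

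The problem is Step~2, the only substantive computation, which fails as written. With $v=u^{-1}$ you have $\bar v=\bar u^{-1}$, so $-1/\bar v=-\bar u$, not $-u$. Using $-u$ you arrive at $\bigl((u/\bar u)\cdot f(x),-u\bigr)$, which does not match $(f(x),-\bar u)$ in either component. Your proposed rescue, applying the target gluing ``with the conjugate variable'', is not legitimate. $\bar N(1)$ is glued by the same map $\psi_N(y,w)=(w^{-1}\cdot y,w^{-1})$ as $N(1)$; the bar only records that $\hat f$ reverses the complex structure. In any case, conjugating only the base coordinate would still leave the stray factor $u/\bar u$ acting on $f(x)$.

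The repair is immediate:
\[
\hat f(x',v)=\Bigl((-\bar u)\cdot(-1)\cdot f(u^{-1}\cdot x),\,-\bar u\Bigr)=\Bigl((-\bar u)(-1)\bar u^{-1}\cdot f(x),\,-\bar u\Bigr)=(f(x),-\bar u),
\]
which agrees with $\psi_N(\hat f(x,u))$ exactly. This also makes precise the role of $-f$ that you describe under ``main obstacle''. On the second chart, the equivariance $f(u^{-1}\cdot x)=\bar u^{-1}\cdot f(x)$ combines with $-1/\bar v=-\bar u$ to give a factor $-1$. On the first chart, $\psi_N$ contributes the factor $+1$. The inserted action of $-1$ cancels exactly this discrepancy.

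To extend across $v=0$, rewrite the second formula in the first target chart as $(x',v)\mapsto\bigl((-1)\cdot f(x'),-\bar v\bigr)$, which parallels your treatment of $u=0$ in Step~1. With that and the corrected Step~2, your argument is complete. It is slightly more explicit than the paper's, which leaves the extension over $u=0$ and $v=0$ implicit.
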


\begin{proof}
  Let $\psi_M$ and $\psi_N$ be the gluing functions of $M(1)$ and $N(1)$ given in the definition of the $\mathbb{C}^*$-gluing construction (\S\ref{subsubsec:def of C^*-gluing}), respectively. In terms of coordinates $(x,u)$ on $M\times\mathbb{C}^*$,
  a simple calculation yields the following identity:
\[\hat{f}\circ \psi_M (x,u)=(f(x),-\bar{u})=\psi_N\circ \hat{f}(x,u).\]
Therefore, the anti-holomorphic map $\hat{f}$ is well-defined.
\end{proof}

\begin{remark}
    By construction, $\hat{f}$ is compatible with the antipodal map $\sigma_{\mathrm{ap}}$ on $\mathbb{P}^1$.
\end{remark}

Next, we show the following proposition on the extension of quaternionic structures and real structures.

\begin{proposition}\label{prop:real str on M(1)}
    Let $M$ be a complex manifold equipped with a $\mathbb{C}^*$-action.
    Suppose that an anti-holomorphic automorphism $f$ on $M$ compatible with the $\mathbb{C}^*$-action (cf.~Prop.~\ref{prop:anti-hol map on M(1)}) is given.
    Then, for the anti-holomorphic map $\hat{f}$ on the fiber bundle $M(1)$ given by Proposition~\ref{prop:anti-hol map on M(1)}, the following hold:

    \begin{enumerate}
       \item The map $\hat{f}$ is a real structure on the fiber bundle $M(1)$ if and only if the map $f$ is a quaternionic structure on $M$ (cf.~Def.~\ref{def:quat str}).
       \item The map $\hat{f}$ is a quaternionic structure on the fiber bundle $M(1)$ with respect to the (fiberwise) $\mathbb{C}^*$-action of \textit{Type 2} \eqref{eq:type 2} if and only if the map $f$ is a real structure on $M$.
    \end{enumerate}

\end{proposition}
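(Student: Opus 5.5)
The plan is to compute $\hat f\circ\hat f$ directly in the two charts of $M(1)$, using the explicit formula \eqref{defeq:hat f} and the compatibility $f(\lambda\cdot x)=\bar\lambda\cdot f(x)$. Here $M=N$, and $\hat f$ is regarded as an anti-holomorphic self-map of $M(1)$. Proposition~\ref{prop:anti-hol map on M(1)} already gives that $\hat f$ is well defined, so it suffices to work in the chart $(x,u)$ with $u\in\mathbb{C}^*$ and then extend to the whole chart by continuity. The chart $(x',v)$ is handled identically, or follows from the gluing.

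\textbf{Step 1: computing $\hat f^2$.} First apply $\hat f$ to $(x,u)$, which gives $\bigl(-\tfrac{1}{\bar u}\cdot f(x),-\tfrac1{\bar u}\bigr)$. Apply $\hat f$ again, now with base coordinate $u_1=-1/\bar u$, so that $-1/\bar u_1=u$. The fiber coordinate becomes
\[
u\cdot f\Bigl(-\tfrac{1}{\bar u}\cdot f(x)\Bigr)=u\cdot\Bigl(-\tfrac1u\Bigr)\cdot f(f(x))=(-1)\cdot f^2(x).
\]
Hence
\[
\hat f^2(x,u)=\bigl(\lambda_{-1}\circ f^2(x),\,u\bigr).
\]
The same computation in the $(x',v)$ chart gives $\bigl((-1)^2\cdot\lambda_{-1}f^2(x'),v\bigr)$, since the two extra signs from $-f$ cancel. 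This is again $\lambda_{-1}f^2$ on the fiber, consistent with the fact that $\lambda_{-1}$ commutes with the gluing.

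\textbf{Step 2: reading off the two equivalences.} For (1), $\hat f$ is a real structure iff $\hat f^2=\mathrm{id}$, i.e.\ iff $\lambda_{-1}\circ f^2=\mathrm{id}$, i.e.\ iff $f^2=\lambda_{-1}$ (using $\lambda_{-1}^2=\mathrm{id}$). That is exactly Definition~\ref{def:quat str}. For (2), the Type~2 action of $-1$ is $(x,u)\mapsto(\lambda_{-1}x,u)$. So $\hat f^2=\lambda^{\mathrm{Type2}}_{-1}$ iff $\lambda_{-1}f^2=\lambda_{-1}$ iff $f^2=\mathrm{id}$, i.e.\ $f$ is a real structure. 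For (2) one should also note that $\hat f$ intertwines the Type~2 action with its conjugate: $\hat f(\lambda\cdot x,u)=\bigl(-\tfrac1{\bar u}\cdot\bar\lambda f(x),-\tfrac1{\bar u}\bigr)=\bar\lambda\cdot\hat f(x,u)$, because the $\mathbb{C}^*$-action is abelian. This is the natural compatibility required of a quaternionic structure on a $\mathbb{C}^*$-space.

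\textbf{Main obstacle.} The only delicate point is the sign bookkeeping in Step~1, namely that the $-1/\bar u$ factors combine to exactly $\lambda_{-1}$ and not the identity, together with checking the $v$-chart convention with $-f$. The "only if" directions also need care: the equality of maps holds on the dense open set $u\neq 0,\infty$, but since it restricts fiberwise to an identity on $M$, it holds for all $x$.
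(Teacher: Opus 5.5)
Your proposal is correct and follows essentially the same route as the paper: the paper also computes $\hat{f}^2(x,u)=(-f^2(x),u)$, i.e.\ $\lambda_{-1}\circ f^2$ on each fiber, and reads off both equivalences directly from this identity. Your additional checks are details the paper leaves implicit; these are the $(x',v)$-chart computation, the extension across $u=0,\infty$, and the compatibility of $\hat{f}$ with the Type~2 action.
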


\begin{proof}
    As in \S\ref{subsubsec:def of C^*-gluing}, in terms of coordinates $(x,u)$ on $M\times\mathbb{C}^*$, a simple calculation yields the following identity:
    \[\hat{f}^2(x,u) = (-f^2(x),u).\]
    The assertion follows immediately from this.
\end{proof}

We introduce specific examples of extensions of quaternionic structures and real structures. The following examples appear naturally in the discussion of twistor spaces.

\begin{example}\label{ex:quat str on O(k)}
    Let $M=V$ be a complex vector space, and consider a $\mathbb{C}^*$-action on it by scalar multiplication of weight $k\in\mathbb{Z}$. Then, we obtain the vector bundle $M(1)=V(k)$.
    Consider the real structure $f(z) = \bar{z}$ on $V$ given by complex conjugation.
    Then, the real structure $f$ induces a quaternionic structure $\sigma_k\coloneqq\hat{f}$ on $V(k)$:
\[\sigma_k(z,u)=\left((-1)^k\dfrac{1}{\bar{u}^k}\bar{z},-\dfrac{1}{\bar{u}}\right).\]
\end{example}

\begin{remark}
When $V=\mathbb{C}$, this quaternionic structure $\sigma_k$ coincides with the anti-holomorphic map on $\mathcal{O}(k)$ considered by Hitchin \cite{Hitchin79}.
\end{remark}

\begin{example} \label{ex:real str on O(1)+O(1)}
    Let $V=\mathbb{C}^2$ and consider a $\mathbb{C}^*$-action by scalar multiplication.
    Then, we obtain the vector bundle $V(1)=\mathcal{O}(1)\oplus \mathcal{O}(1)$.
    If we consider the quaternionic structure $j(z_1,z_2)\coloneqq(\bar{z_2},-\bar{z_1})$ on $V$, then we obtain a real structure $\tau\coloneqq\hat{j}$ on the vector bundle $V(1)$:
    \begin{equation*}
       \tau(z_1,z_2,u) = \left(-\dfrac{1}{\bar{u}}\bar{z_2},\dfrac{1}{\bar{u}}\bar{z_1},-\dfrac{1}{\bar{u}}\right).
    \end{equation*}
\end{example}

\begin{remark}
    This real structure $\tau$ coincides with the real structure on the twistor space $\mathcal{O}(1)\oplus \mathcal{O}(1)$ corresponding to the flat metric on $V=\mathbb{C}^2$.
\end{remark}

\subsubsection{Extension of holomorphic differential forms}

We show that when a holomorphic differential form on a manifold $M$ with a $\mathbb{C}^*$-action is compatible with the $\mathbb{C}^*$-action, a relative holomorphic differential form is naturally induced on the fiber bundle $M(1)$. The proof of the following proposition is straightforward and thus omitted.

\begin{proposition} \label{prop:diff form on M(1)}
    Let $M$ be a manifold with a $\mathbb{C}^*$-action. Suppose that a holomorphic $p$-form $\omega$ satisfying $\lambda^*\omega=\lambda^k\omega\ (k\in \mathbb{Z})$ is given.
    Let $pr_1$ be the projection to the first factor of $M\times \mathbb{C}^*$. By gluing the differential form $pr_1^*\omega$ on $M\times \mathbb{C}^*$ via the $\mathbb{C}^*$-action, we obtain a relative holomorphic differential form $\hat{\omega}$ on $M(1)$ twisted by $\mathcal{O}(k)$:
\[\hat{\omega}\in \Gamma(M(1),\wedge^pT'^*_\varphi(k)).\]
\end{proposition}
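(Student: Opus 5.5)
We have to prove Proposition~\ref{prop:diff form on M(1)}: the form $pr_1^*\omega$ on the chart $M\times\mathbb{C}$ (coordinates $(x,u)$) and the same form on the second chart (coordinates $(x',v)$) agree under the gluing map $\psi$ up to the transition function of $\mathcal{O}(k)$. Here $\varphi:M(1)\to\mathbb{P}^1$ is the bundle projection, so $T'^*_\varphi$ is the relative holomorphic cotangent bundle.

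The plan is a direct computation in the two charts. On each chart $U_0=M\times\mathbb{C}$ and $U_\infty=M\times\mathbb{C}$, define local relative $p$-forms
\[
\hat{\omega}_0\coloneqq pr_1^*\omega \bmod du, \qquad \hat{\omega}_\infty\coloneqq pr_1^*\omega \bmod dv.
\]
These are sections of $\wedge^pT'^*_\varphi$ over the respective charts. On $U_0$ the relative cotangent bundle is $pr_1^*T'^*M$, because the fibers of $\varphi$ are the slices $M\times\{u\}$. A twisted form $\hat{\omega}\in\Gamma(M(1),\wedge^pT'^*_\varphi(k))$ amounts to a pair of local forms satisfying $\hat{\omega}_0=u^{k}\hat{\omega}_\infty$ on the overlap, for the standard trivialization of $\mathcal{O}(k)$ with transition $u^k$ (equivalently $v^{-k}$). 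The sign convention should be fixed to match Example~\ref{ex:quat str on O(k)}, where $V(k)$ has fiber coordinate glued by weight $k$.

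The key step is to compute $\psi^*\hat{\omega}_\infty$ on $M\times\mathbb{C}^*$. Since $x'=u^{-1}\cdot x$ and $v=u^{-1}$, restricting to a fiber $u=\mathrm{const}$ kills every term involving $du$. On that fiber the map $x\mapsto x'$ is just the action $\lambda_{u^{-1}}$. Hence, relatively,
\[
\psi^*\hat{\omega}_\infty=\lambda_{u^{-1}}^*\omega=u^{-k}\omega=u^{-k}\hat{\omega}_0 .
\]
Thus $\hat{\omega}_0=u^{k}\,\psi^*\hat{\omega}_\infty$, which is exactly the cocycle of $\mathcal{O}(k)$. The two local forms therefore glue to a global section of $\wedge^pT'^*_\varphi\otimes\varphi^*\mathcal{O}(k)$.

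The only point needing care, and the mild obstacle, is justifying that the full pullback $\psi^*pr_1^*\omega$ differs from $\lambda_{u^{-1}}^*\omega$ only by terms containing $du$. This follows from the chain rule applied to the action map $\mathbb{C}^*\times M\to M$: the differential splits into a part along the $\mathbb{C}^*$-direction, which is proportional to $du$ and vanishes in the relative bundle, and the fiberwise differential $d\lambda_{u^{-1}}$. Once this is in place, holomorphicity of $\hat{\omega}$ is immediate, since on each chart it is a pullback of the holomorphic form $\omega$.
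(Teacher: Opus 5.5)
Your proposal is correct and is the argument the paper intends; the paper omits the proof as straightforward, and the statement itself describes gluing $pr_1^*\omega$ via the $\mathbb{C}^*$-action. Working modulo $du$, you get $\psi^*\hat{\omega}_\infty=\lambda_{u^{-1}}^*\omega=u^{-k}\hat{\omega}_0$, which has the same transition as the fiber coordinate $z'=u^{-k}z$ of $V(k)$ and is therefore the $\mathcal{O}(k)$ cocycle. The same computation, taken modulo the base directions as well as $du$, also covers the relative version the paper later uses for $\mathcal{Y}(1)\to\mathcal{C}(2)$.
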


\subsubsection{Construction of the principal twistor model}

Let $X$ be a conical symplectic variety with a good triple $(\lambda,\omega,j)$.
Assume that $X$ admits a crepant resolution $Y$.
In this subsection, we show that a principal twistor model $\mathcal{Y}(1)$ can be naturally constructed by applying the $\mathbb{C}^*$-gluing construction to the triple induced on the universal Poisson deformation $\mathcal{Y}$ of the crepant resolution $Y$.

\medskip
As seen in Proposition~\ref{prop: good triple on Yuni}, the good triple on $X$ naturally induces the triple $(\tilde{\lambda}_Y,\tilde{\omega}_Y,\tilde{j}_Y)$ on the total space of the universal Poisson deformation $\varphi_Y:\mathcal{Y}\rightarrow \mathcal{C}$ of the resolution $Y$.
In this case, scalar multiplication $\lambda_\mathcal{C}=\lambda_{\mathrm{Scal}}^2$ of weight $2$ and a real structure given by complex conjugation on $\mathcal{C}$ are induced on the base space $\mathcal{C}$, as seen in Remark~\ref{rem:C^* and real str on base sp}.
Thus, by applying the $\mathbb{C}^*$-gluing construction to these, we obtain the vector bundle $\mathcal{C}(2)$ and the real structure $\sigma_2$ on it (cf.~Example~\ref{ex:quat str on O(k)}).
Since the map $\varphi_Y:\mathcal{Y}\rightarrow \mathcal{C}$ is equivariant with respect to the $\mathbb{C}^*$-action, by Proposition~\ref{prop: C^* equiv map on M(1)}, we obtain a fiber bundle $P\coloneqq\mathcal{Y}(1)\rightarrow \mathbb{P}^1$ and a holomorphic bundle map
\[\varphi_Y(1):P\rightarrow \mathcal{C}(2).\]
This satisfies condition (P1) in the definition of a principal twistor model (Def.~\ref{def:PTM}).
Furthermore, applying Proposition~\ref{prop:real str on M(1)} to the quaternionic structure $\tilde{j}_Y$, we obtain a real structure $\tau_P\coloneqq\widehat{\tilde{j}}_Y$ on $P$.
By construction, the real structure $\tau_P$ on $P$ and the real structure $\sigma_2$ on $\mathcal{C}(2)$ are compatible with respect to the map $\varphi_Y(1)$.
That is, they satisfy condition (P2).
Finally, combined with Proposition~\ref{prop:diff form on M(1)}, we obtain a relative holomorphic symplectic $2$-form $\omega_P\coloneqq\widehat{\tilde{\omega}}_Y \in \Gamma(P, \wedge^2T^*_{\varphi_Y(1)}(2))$.
By definition, the real structure $\tau_P$ and the relative $2$-form $\omega_P$ are compatible.
That is, they satisfy condition (P3).
Therefore, the tuple $(\varphi_Y(1):P\rightarrow \mathcal{C}(2), \omega_P, \tau_P)$ is a principal twistor model.
Consequently, we obtain the following proposition.

\begin{proposition}\label{main prop: PTM of Y}

Let $X$ be a conical symplectic variety with a good triple $(\lambda,\omega,j)$.
Assume that $X$ admits a crepant resolution $Y$.
The good triple on $X$ naturally induces the triple $(\tilde{\lambda}_Y,\tilde{\omega}_Y,\tilde{j}_Y)$ on the total space of the universal Poisson deformation $\varphi_Y:\mathcal{Y}\rightarrow \mathcal{C}$ of the resolution $Y$ by Proposition~\ref{prop: good triple on Yuni}.
By applying the $\mathbb{C}^*$-gluing construction to this triple, we obtain a fiber bundle $\varphi_Y(1):\mathcal{Y}(1)\rightarrow \mathcal{C}(2)$, a real structure $\tau_P$, and a relative holomorphic symplectic $2$-form $\omega_P$ on $\mathcal{Y}(1)$. In this case, the tuple $(\varphi_Y(1):\mathcal{Y}(1)\rightarrow \mathcal{C}(2),\omega_P,\tau_P)$ is a principal twistor model.

\end{proposition}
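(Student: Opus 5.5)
The plan is to verify the three conditions (P1)--(P3) of Definition~\ref{def:PTM} for $P=\mathcal{Y}(1)$ in turn. Each is obtained by applying one of the $\mathbb{C}^*$-gluing results to one component of the triple $(\tilde{\lambda}_Y,\tilde{\omega}_Y,\tilde{j}_Y)$ from Proposition~\ref{prop: good triple on Yuni}. Before starting, I would record that $\mathcal{Y}$ is a smooth complex manifold of dimension $2n+d$, where $2n=\dim Y$ and $d=\dim\mathcal{C}=\dim H^2(Y;\mathbb{C})$. This holds because $\varphi_Y$ is a flat family with smooth fibers over the smooth base $\mathcal{C}$. Since the gluing map $\psi$ in \eqref{gluing fn} is biholomorphic, $\mathcal{Y}(1)$ is then a complex manifold of dimension $2n+d+1$.

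\textbf{(P1).} Theorem~\ref{Namikawa Thm: C^*-action on Yuni} says that $\varphi_Y$ is $\mathbb{C}^*$-equivariant, with $\mathbb{C}^*$ acting on $\mathcal{C}$ by scalar multiplication of weight $2$. Proposition~\ref{prop: C^* equiv map on M(1)} therefore gives a bundle map $\varphi_Y(1):\mathcal{Y}(1)\to\mathcal{C}(1)$ over $\mathbb{P}^1$. By the Example following \S\ref{subsubsec:def of C^*-gluing}, the target $\mathcal{C}(1)$ is exactly $\mathcal{C}\otimes\mathcal{O}(2)=\mathcal{C}(2)$. The projection $\mathcal{Y}(1)\to\mathbb{P}^1$ is a submersion because it is locally the product projection $\mathcal{Y}\times\mathbb{C}\to\mathbb{C}$.

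\textbf{(P2).} Conditions (2) and (3) of Definition~\ref{def:good triple on PD} say that $\tilde{j}_Y$ satisfies $\tilde{j}_Y(\lambda\cdot x)=\bar\lambda\cdot\tilde{j}_Y(x)$, and $\tilde{j}_Y$ is a quaternionic structure. Proposition~\ref{prop:real str on M(1)}(1) then shows that $\tau_P=\widehat{\tilde{j}}_Y$ is a real structure on $\mathcal{Y}(1)$ covering $\sigma_{\mathrm{ap}}$. On the base, the induced anti-holomorphic map on $\mathcal{C}$ is complex conjugation (Remark~\ref{rem:C^* and real str on base sp}), and formula \eqref{defeq:hat f} turns it into $\sigma_2$ (Example~\ref{ex:quat str on O(k)}). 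Because $\varphi_Y\circ\tilde{j}_Y=\overline{(\cdot)}\circ\varphi_Y$, applying \eqref{defeq:hat f} chartwise gives $\bar\varphi_Y(1)\circ\tau_P=\sigma_2\circ\varphi_Y(1)$. Here the action of $-1$ is harmless, since $\varphi_Y$ is equivariant.

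\textbf{(P3).} Condition (1) gives $\tilde\lambda_Y^*\tilde\omega_Y=\lambda^2\tilde\omega_Y$. Proposition~\ref{prop:diff form on M(1)} with $k=2$ then produces $\omega_P\in\Gamma(\mathcal{Y}(1),\wedge^2T'^*_{\varphi_Y(1)}(2))$. It is fiberwise symplectic because on each chart it is just $\tilde\omega_Y$ restricted to fibers of $\varphi_Y$.

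For the reality condition $\tau_P^*\omega_P=\overline{\omega_P}$, I would check it in the chart $(x,u)$. There $\tau_P$ sends $(x,u)\mapsto(-\bar u^{-1}\cdot\tilde{j}_Y(x),-\bar u^{-1})$. Combining $\tilde{j}_Y^*\tilde\omega_Y=\overline{\tilde\omega_Y}$ with the weight-$2$ scaling produces a factor $\bar u^{-2}$. This factor is exactly the transition function of $\mathcal{O}(2)$ compatible with $\sigma_2$.

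I expect this last sign and twisting bookkeeping to be the only delicate point. In particular, one must confirm that the extra $-1$ in the second chart of \eqref{defeq:hat f} contributes $(-1)^2=1$ on the $2$-form, so that no sign appears.
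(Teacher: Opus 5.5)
Your proposal is correct and follows the paper's own argument. Like the paper, you verify (P1)--(P3) by applying Propositions~\ref{prop: C^* equiv map on M(1)}, \ref{prop:real str on M(1)} and \ref{prop:diff form on M(1)} to $\varphi_Y$, $\tilde{j}_Y$ and $\tilde{\omega}_Y$, with the weight-$2$ action and conjugation on $\mathcal{C}$ producing $\mathcal{C}(2)$ and $\sigma_2$. Your extra checks fill in what the paper states holds ``by construction'': the dimension count, the chart computation of $\bar{\varphi}_Y(1)\circ\tau_P=\sigma_2\circ\varphi_Y(1)$, and the $(-1)^2=1$ sign for $\omega_P$.
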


\begin{remark}\label{remark:sing PTM of X}
    The good triple on $X$ induces the good triple $(\tilde{\lambda}',\tilde{\omega}',\tilde{j}')$ on the affinization $\varphi':\mathcal{X}'\rightarrow\mathcal{C}$ of the universal Poisson deformation $\mathcal{Y}\rightarrow\mathcal{C}$ of the resolution $Y$ by Proposition~\ref{prop: good triple on Yuni}.
    By applying the same construction as in Theorem~\ref{main thm: universality of PTM} for this triple, we obtain the tuple $(\varphi'(1):\mathcal{X}'(1)\rightarrow \mathcal{C}(2), \omega_{P}', \tau_{P}')$. We call this tuple the \textit{singular principal twistor model} of $\mathcal{Y}(1)$.
\end{remark}

We define the following for the discussion in the subsequent sections.

\begin{definition}\label{def:twistor cone}
    Let $X$ be a conical symplectic variety with a good triple $(\lambda,\omega,j)$.
    We call the twistor model $(X(1)\rightarrow\mathbb{P}^1,\hat{\omega},\hat{j})$ obtained by using the $\mathbb{C}^*$-gluing construction for this good triple a \textit{twistor cone}.
\end{definition}

\begin{remark}
    The twistor cone $X(1)$ coincides with the singular twistor model obtained by slicing the singular principal twistor model $\mathcal{X}'(1)\rightarrow\mathcal{C}(2)$ along the zero section of $\mathcal{C}(2)$.
\end{remark}

\begin{remark}[Conceptual diagrams of the principal twistor model $\mathcal{Y}(1)$] \label{remark:Conceptual diagrams of PTM}
    \hfill \break
    Let $Y(1)$ be the twistor model obtained by slicing the principal twistor model $\mathcal{Y}(1)$ along the zero section of the vector bundle $\mathcal{C}(2)$.
    Note that $Y(1)$ is the natural simultaneous resolution of the twistor cone $X(1)$.
    Figure \ref{fig:PTM} provides a conceptual diagram of the principal twistor model $\mathcal{Y}(1)$, represented as a rectangular box, where the three coordinate axes correspond to $\mathcal{C}$, $Y$, and $\mathbb{P}^1$. The three faces of this box correspond to the vector bundle $\mathcal{C}(2)$, the universal Poisson deformation family $\mathcal{Y}$, and the resolution $Y(1)$, respectively.
    Furthermore, Figure \ref{fig:twistor model in PTM} is a conceptual diagram illustrating how a twistor model $Z_s$, obtained by slicing $\mathcal{Y}(1)$ along a real section $s$ of $\mathcal{C}(2)$, is embedded inside the principal twistor model $\mathcal{Y}(1)$.
\end{remark}

\begin{figure}[htbp]
\centering
\begin{minipage}[t]{.46\linewidth}%
    \centering
    \includegraphics[height = 4.8cm]{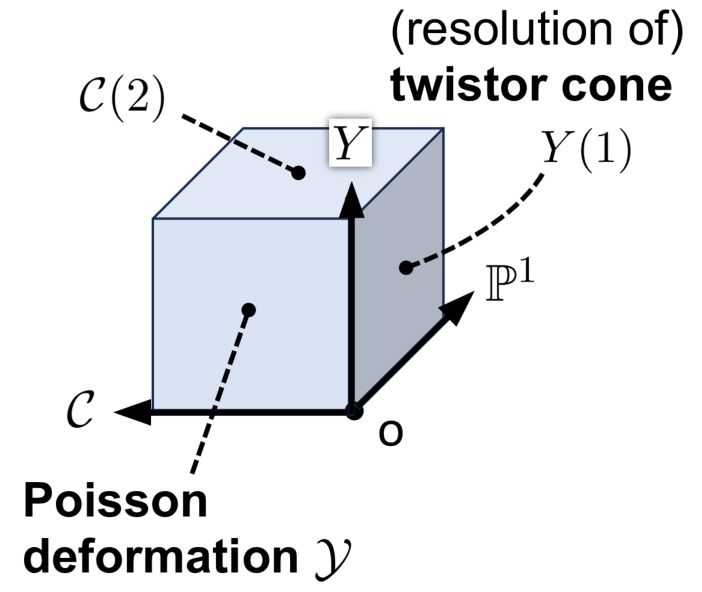}
    \caption{A conceptual diagram of the principal twistor model $\mathcal{Y}(1)$, where the three coordinate axes correspond to $\mathcal{C}$, $Y$, and $\mathbb{P}^1$. The three faces correspond to $\mathcal{C}(2)$, $\mathcal{Y}$, and $Y(1)$.}
    \label{fig:PTM}
\end{minipage}
\hfill
\begin{minipage}[t]{.46\linewidth}%
    \centering
    \includegraphics[height = 4.8cm]{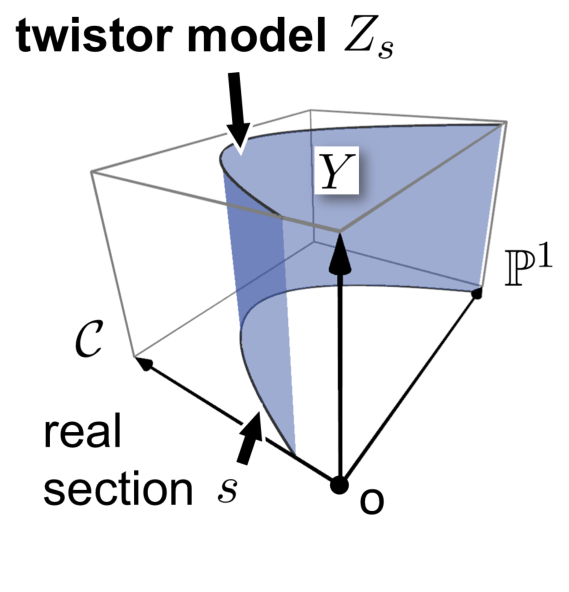}
    \caption{A conceptual diagram showing the twistor model $Z_s$ associated with a real section $s$ embedded inside the principal twistor model $\mathcal{Y}(1)$.}
    \label{fig:twistor model in PTM}
\end{minipage}
\end{figure}

\FloatBarrier
\subsection{Universality of the Principal Twistor Model}

Let $X$ be a conical symplectic variety. Assume that the regular locus $X_{\mathrm{reg}}$ admits an algebraic hyperkähler cone metric $g_0$. Let $Y$ be a crepant resolution of $X$.
In this section, we prove the universality of the principal twistor model, which states that the twistor space corresponding to any algebraic hyperkähler metric on the resolution $Y$ asymptotic to the metric $g_0$ is obtained by slicing the principal twistor model.
To this end, we first show that the algebraic hyperkähler cone metric $g_0$ naturally induces the good triple on $X$. Next, we prove the aforementioned universality for the principal twistor model constructed from this good triple.

\subsubsection{Twistor Cones and Hyperkähler Cone Metrics} \label{subsubsec:good triple from HK cone metric}

Let $X$ be a conical symplectic variety, and assume that its regular locus $X_{\mathrm{reg}}$ admits an algebraic hyperkähler cone metric $g_0$.
In this subsection, we show that this metric $g_0$ naturally induces the good triple on $X$, and that its corresponding twistor space is isomorphic to the twistor cone $X(1)$.

\medskip
First, we introduce the definition of an algebraic twistor model.
\begin{definition}\label{def:algebraic twistor model}
We say that a twistor model $(Z\to\mathbb{P}^1,\omega,\tau)$ is \textit{algebraic} if it satisfies the following three conditions:
Suppose $\mathbb{P}^1=\mathbb{C}^+\cup\mathbb{C}^-$, and let $Z^\pm\to\mathbb{C}^\pm$ denote the restrictions of the fibration $Z$ to $\mathbb{C}^{\pm}$.
\begin{enumerate}
   \item The pair $(Z^\pm, \omega^\pm)$ is an algebraic Poisson deformation, where $\omega^\pm$ denotes the restriction of the relative $2$-form $\omega$ to $Z^\pm$.
   \item The gluing function of $Z^\pm$ is algebraic.
   \item The restriction $\tau^\pm:Z^\pm\to Z^\mp$ of the real structure $\tau$ is an algebraic map.
\end{enumerate}
\end{definition}

We similarly define the notion of being algebraic for twistor spaces and corresponding hyperkähler metrics:
\begin{definition}\label{def:alg for tw and HK met}
   We say that a twistor space is \textit{algebraic} if it is algebraic as a twistor model.
   We call a hyperkähler metric corresponding to an algebraic twistor space an \textit{algebraic hyperkähler metric}.
\end{definition}

\begin{example}\label{ex:alg HK metric}
   We list a few examples and a non-example of algebraic hyperkähler metrics to clarify the definitions:
   \begin{enumerate}
    \setlength{\emergencystretch}{2em}
      \item \textit{ALE gravitational instantons}:~These are typical examples of algebraic hyper\-kähler metrics (cf.~\cite{Hitchin79}).
      \item \textit{ALF gravitational instantons}:~These spaces provide a non-example. The twistor construction for ALF metrics involves transcendental gluing functions (e.g., exponential terms), which fails to satisfy condition (2) of Definition~\ref{def:algebraic twistor model}. Thus, they are not algebraic.
      \item \textit{Quiver varieties and toric hyperkähler manifolds}:~Hyperkähler metrics on Nakajima quiver varieties (cf.~\cite{Nakajima94}) and toric hyperkähler manifolds (cf.~\cite{Bielawski_Dancer00}) are also algebraic.
   \end{enumerate}
\end{example}

Next, we introduce the definition of a cone metric on a conical symplectic variety.
\begin{definition}\label{def:cone met on coni symp vrt}
   Let $X$ be a conical symplectic variety with $\mathbb{C}^*$-action $\lambda$. Assume that the regular locus $X_{\mathrm{reg}}$ admits a metric $g_0$.
   We say that the metric $g_0$ is a \textit{cone metric} on $X_{\mathrm{reg}}$ if there exists a positive integer $k$ such that $\lambda_r^*g_0 = r^k g_0$ for any action $\lambda_r$ with $r>0$.
\end{definition}

The goal of this subsection is to prove the following proposition.
\begin{proposition}\label{prop:good triple from HK cone metric}
   Let $X$ be a conical symplectic variety. Assume that the regular locus $X_{\mathrm{reg}}$ admits an algebraic hyperkähler cone metric $g_0$, and let $(Z\to\mathbb{P}^1,\omega,\tau, \{\ell_x\}_{x\in X_{\mathrm{reg}}})$ be the corresponding algebraic twistor space.
   Let $\omega_0$ be the restriction of the relative holomorphic symplectic form $\omega$ to the central fiber $Z_0 \simeq X$.
   Note that the real structure $\tau$ restricts to an anti-holomorphic map $\tau|_{Z_0}: Z_0 \to \overline{Z}_\infty$. Let $\tau_0$ be the anti-holomorphic involution on $X$ induced by this restriction via the natural identifications $Z_\infty \simeq X$.
   Denoting the $\mathbb{C}^*$-action on $X$ by $\lambda_0$, the tuple $(\lambda_0, \omega_0, \tau_0)$ forms a good triple on $X$.
   Moreover, the twistor space $Z$ is isomorphic as a twistor model to the twistor cone $(X(1)\to\mathbb{P}^1,\hat{\omega}_0,\hat{\tau}_0)$ constructed from this good triple (cf.~Def.~\ref{def:twistor cone}).
\end{proposition}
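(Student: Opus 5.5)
The plan is to build an explicit trivialization of $Z$ from the conical symmetry of $g_0$ and read off the good triple from it. The key input is the standard fact (Hitchin, Swann) that a hyperkähler cone metric carries an isometric $\mathbb{R}_{>0}\times SU(2)$ (or $SO(3)$) symmetry. The $\mathbb{R}_{>0}$ factor is the homothety $\lambda_r$. The $SU(2)$ factor rotates the sphere of complex structures and combines with $\lambda_r$ into the holomorphic $\mathbb{C}^*$-action $\lambda_0$ for the complex structure $I$ at $0\in\mathbb{P}^1$.

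On twistor space, this means the $\mathbb{C}^*$-action lifts to a holomorphic action on $Z$ covering $u\mapsto \lambda^2 u$ on $\mathbb{P}^1$. It preserves $\omega$ up to the factor $\lambda^2$, it sends twistor lines to twistor lines, and it commutes with $\tau$ in the twisted sense $\tau\circ\lambda=\bar\lambda\circ\tau$. I will verify these directly from the HKLR correspondence (Theorem~\ref{thm:twistor correspondence}): an isometry permuting complex structures induces a holomorphic automorphism of $Z$ that preserves the real structure and the twisted symplectic form. Algebraicity of $Z$ is used to ensure the fibers are the algebraic variety $X$ and the action is algebraic.

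With the lift in hand, the trivialization is as follows. Over $\mathbb{C}^+$, the map $Z_0\times\mathbb{C}^*\to Z|_{\mathbb{C}^*}$, $(x,u)\mapsto \lambda_{\sqrt u}\cdot x$, is well defined because $\lambda_{-1}$ acts trivially on the base and fixes $Z_0$ fiberwise. It extends across $u=0$ once we check that the action is free and transitive on the base $\mathbb{C}^*$; I would rather use the weight-one normalization implicit in the gluing \eqref{gluing fn}. Doing the same over $\mathbb{C}^-$ with $Z_\infty$, and identifying $Z_\infty\simeq X$ via the action, produces exactly the gluing $\psi$. This gives a biholomorphism $Z\simeq X(1)$. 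The relative form pulls back to $\hat\omega_0$ by Proposition~\ref{prop:diff form on M(1)}, since $\lambda^*\omega=\lambda^2\omega$. The real structure pulls back to $\hat\tau_0$ by comparing with formula \eqref{defeq:hat f}.

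The good triple conditions can then be read off. Condition (1) is the weight-$2$ property. Condition (3) is the twisted commutation with $\tau$. Condition (2), $\tau_0^*\omega_0=\bar\omega_0$, follows from $\tau^*\omega=\bar\omega$ restricted to $Z_0\to Z_\infty$ and transported by the identification. The relation $\tau_0^2=\lambda_{-1}$ comes from $\tau^2=\mathrm{id}$ together with the identification $Z_\infty\simeq X$, which involves the square root and hence the sign $\lambda_{-1}$; this is the computation in Proposition~\ref{prop:real str on M(1)}(1) read backwards. Positivity of the grading is part of the conical symplectic hypothesis on $X$.

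I expect the main obstacle to be constructing the holomorphic lift of $\lambda$ to $Z$ with the correct base weight. The cone metric only gives $\lambda_r^*g_0=r^kg_0$ for some $k$, so I first need to argue that $k=2$, or rescale to reach it. This uses $\lambda^*\omega=\lambda^2\omega$ and the fact that $\omega_0=g_0(J\cdot,\cdot)+ig_0(K\cdot,\cdot)$. I then need to produce the $U(1)$ rotating $J,K$ as the Reeb-type flow $I\xi$ of the Euler field $\xi$, and check that it is holomorphic for $I$ and rotates $\omega_0$ by weight $2$. I would follow Swann's argument: $\xi$ is a homothety, so $I\xi$ is Killing and preserves $I$. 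Algebraicity then ensures the resulting $\mathbb{C}^*$ agrees with the given algebraic action $\lambda_0$ rather than merely a holomorphic one.
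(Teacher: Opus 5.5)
Your proposal has a genuine gap. Its two central steps both fail, and together they skip the part of the statement that actually needs proof.

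First, HKLR naturality does not give the symmetry you use. The homothety $\lambda_r$ preserves $I,J,K$, so its lift to $Z$ covers the identity of $\mathbb{P}^1$. The isometric circle $e^{i\theta}$ rotating $J,K$ lifts to a map covering $u\mapsto e^{2i\theta}u$. In the language of Proposition~\ref{prop:C^*- on M(1)}, these are the $\mathbb{R}_{>0}$-part of the Type~2 action and the $S^1$-part of the Type~1 action. Their product covers $u\mapsto e^{2i\arg\lambda}u$, which is not holomorphic in $\lambda$. A holomorphic $\mathbb{C}^*$-action covering $u\mapsto\lambda^2u$ would be the complexification of the circle, and its global existence already needs a completeness argument. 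More seriously, it cannot have the properties you list. Project $\tau\circ\lambda=\bar\lambda\circ\tau$ to $\mathbb{P}^1$: one side gives $\sigma_{\mathrm{ap}}(\lambda^2u)=\bar\lambda^{-2}\sigma_{\mathrm{ap}}(u)$, the other gives $\bar\lambda^{2}\sigma_{\mathrm{ap}}(u)$, so the relation fails unless $\lambda^4=1$. The correct relation for Type~1 is $\tau\circ\lambda=\bar\lambda^{-1}\circ\tau$. Moreover, for real $r\neq1$ the action does not preserve twistor lines. On $\mathcal{O}(1)\oplus\mathcal{O}(1)$ the real sections are $(a+\bar c u,\ c-\bar a u)$, and Type~1 multiplication by $r$ sends such a section to $(ra+r^{-1}\bar c u,\ rc-r^{-1}\bar a u)$, which is not real.

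Second, the trivialization is not well defined, and the real content is skipped. The point $0\in\mathbb{P}^1$ is fixed by every symmetry above, so $Z_0$ is invariant. Hence $(x,u)\mapsto\lambda_{\sqrt u}\cdot x$ with $x\in Z_0$ lands in $Z_0$, not in $Z_u$. Also, $\lambda_{-1}$ acts on $Z_0$ as $(\lambda_0)_{-1}$, which is $-\mathrm{id}$ for $\mathbb{C}^{2n}$, not trivially.

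The substantive claim is that $Z|_{\mathbb{C}^\pm}$ is a product near $u=0$, and a $\mathbb{C}^*$-action covering $u\mapsto\lambda^2u$ cannot give this by itself. The twistor spaces of Remark~\ref{rem:I fixed moduli} (those with $s(0)=0$) carry such an action, yet their fibre over $0$ is $Y$ while the fibres over $u\neq0$ are the affine varieties $\mathcal{Y}_{s(u)}$.

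What forces triviality is the cone homothety acting \emph{fiberwise}, which is how the paper proceeds:
\begin{enumerate}
\item It combines the fiberwise homothety with the $\mathbb{R}_{>0}$-equivariant universality of $\mathcal{X}\to\mathcal{B}$ (Lemma~\ref{lemma: C^* equiv embed to Xcal} and Remark~\ref{remark: R^+-embed to Xcal}).
\item The classifying map $\mathbb{C}\to\mathcal{B}$ is then equivariant from the trivial action to a positive-weight action, hence zero (Lemma~\ref{lemma: loc trivial for tw sp of HK cone}).
\item It still has to identify $\tau$ and the gluing with $\hat\tau_0$ and the model gluing. It does this with the equivariant conjugacy Lemma~\ref{lemma:uniqueness of f_n^m} and the diagonal form of $\psi$.
\end{enumerate}
Your ``comparison with the formula of Proposition~\ref{prop:anti-hol map on M(1)}'' presupposes this identification. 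Separately, deriving $k=2$ from $\lambda^*\omega=\lambda^2\omega$ is circular, since that is condition~(1) of the conclusion. The weight should instead come out of the $\mathcal{O}(2)$-twist of $\omega$ once the gluing is known to be $(x,u)\mapsto(u^{-1}\cdot x,u^{-1})$.
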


\begin{remark}\label{remark:weight of HK cone metric}
   By the above proposition, it follows that the hyperkähler cone metric $g_0$ satisfies $(\lambda_0)_r^*g_0 = r^2 g_0$.
\end{remark}

We need the following lemma, which states that the universal Poisson deformation of $X$ is also universal among $\mathbb{C}^*$-equivariant Poisson deformations.
\begin{lemma}[{\cite[Cor.2.4]{namikawa2026notes}}]\label{lemma: C^* equiv embed to Xcal}
   Let $X$ be a conical symplectic variety, and let $\mathcal{X}\to\mathcal{B}$ be its universal Poisson deformation. By Theorem~\ref{Namikawa Thm: C^*-action on Yuni}, $\mathcal{X}$ admits a natural $\mathbb{C}^*$-action. Consider a Poisson deformation $\mathcal{Z}\to \mathcal{D}$ of $X$ equipped with a $\mathbb{C}^*$-action that extends the $\mathbb{C}^*$-action on the central fiber $X$.
   Then, there exists a unique $\mathbb{C}^*$-equivariant map $s:\mathcal{D}\to\mathcal{B}$ between the base spaces such that there is a $\mathbb{C}^*$-equivariant isomorphism from the deformation $\mathcal{Z}$ to the pullback $s^*\mathcal{X}$ of the deformation $\mathcal{X}$ by the map $s$.
\end{lemma}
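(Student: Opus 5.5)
The plan is to obtain the classifying map from the ordinary universality of $\mathcal{X}\to\mathcal{B}$ (Theorem~\ref{Namikawa Thm:CM and univ Poisson}), deduce its $\mathbb{C}^*$-equivariance from uniqueness, and then correct the isomorphism $\mathcal{Z}\simeq s^*\mathcal{X}$ so that it becomes equivariant. The correction is a group-cohomology argument in the spirit of Lemma~\ref{lemma:uniqueness of f_n^m} and Remark~\ref{remark:uniqueness of f_n^m, C^*-equiv ver}.

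\emph{Step 1 (the classifying map).} By universality there is a unique map $s:\mathcal{D}\to\mathcal{B}$ and an isomorphism of Poisson deformations $\Phi:\mathcal{Z}\xrightarrow{\sim}s^*\mathcal{X}$. The map $s$ is a priori only formal or local. I will first work with the formal completions $\{\mathcal{Z}_n\to\mathcal{D}_n\}$ and the formal universal deformation $\{X_n\to S_n\}$, and algebraize only at the end.

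\emph{Step 2 (equivariance of $s$).} Fix $t\in\mathbb{C}^*$. The action $\lambda_t$ on $\mathcal{Z}$ covers the action $t_{\mathcal{D}}$ on $\mathcal{D}$ and satisfies $\lambda_t^*\omega_{\mathcal{Z}}=t^2\omega_{\mathcal{Z}}$. Hence $(\mathcal{Z},t^{-2}\omega_{\mathcal{Z}})$ pulled back along $t_{\mathcal{D}}$ is isomorphic, as a Poisson deformation of $X$, to $(\mathcal{Z},\omega_{\mathcal{Z}})$. This identification uses that $\lambda_t$ acts on $(X,\omega)$ with weight $2$, so that the central fibre is matched with itself.

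The same twisting applied to $\mathcal{X}$ produces $t_{\mathcal{B}}$. Both $s\circ t_{\mathcal{D}}$ and $t_{\mathcal{B}}\circ s$ classify the same Poisson deformation. By uniqueness of the classifying map, $s\circ t_{\mathcal{D}}=t_{\mathcal{B}}\circ s$ for every $t$, so $s$ is $\mathbb{C}^*$-equivariant. Uniqueness of $s$ in the lemma is then automatic.

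\emph{Step 3 (making $\Phi$ equivariant; the main obstacle).} Since $s$ is equivariant, $s^*\mathcal{X}$ carries the induced $\mathbb{C}^*$-action. Transporting it via $\Phi$ gives a second action $\lambda'$ on $\mathcal{Z}$. It covers the same action on $\mathcal{D}$ and agrees with $\lambda$ on $X$.

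I will argue inductively on $n$, as in Lemma~\ref{lemma:uniqueness of f_n^m}. Let
\[
A_n=\mathrm{PAut}(\mathcal{Z}_n;\mathrm{id}|_{\mathcal{Z}_{n-1}}),
\]
which is a vector space (essentially Poisson vector fields on $X$ tensored with $\mathfrak{m}^n/\mathfrak{m}^{n+1}$). It is a rational $\mathbb{C}^*$-module, a direct sum of weight spaces thanks to the positive grading. The family $u_t=\lambda_t\circ\lambda'^{-1}_t$ is a $1$-cocycle of $\mathbb{C}^*$ with values in $A_n$.

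Because $\mathbb{C}^*$ is reductive and $A_n$ is a rational module, $H^1(\mathbb{C}^*,A_n)=0$. Concretely, split $A_n$ into isotypic pieces; each piece is finite-dimensional after restricting to bounded degree, and on it the cocycle is a coboundary. Hence there is $\theta_n\in A_n$ conjugating $\lambda'$ to $\lambda$ at order $n$. Pro-representability lets me choose $\theta_n$ extending $\theta_{n-1}$. Replacing $\Phi$ by $\Phi\circ\theta$ yields a formal $\mathbb{C}^*$-equivariant isomorphism.

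The delicate points are two:
\begin{itemize}
\item justifying that $A_n$ is a rational (locally finite) $\mathbb{C}^*$-module, so that the vanishing of $H^1$ genuinely applies;
\item checking that the inductive choices are compatible.
\end{itemize}
If the local-finiteness is problematic, I would instead restrict to the finitely many weights that actually occur in $u_t$.

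\emph{Step 4 (algebraization).} As in the proof of Corollary~\ref{cor:algebraization of quat str}, the coordinate rings of $\mathcal{Z}$ and of $s^*\mathcal{X}$ are recovered from the inverse-limit rings as the subalgebras generated by $\mathbb{C}^*$-eigenvectors. This uses the positive grading and \cite[Prop.~A.7]{Namikawa08}. An equivariant formal isomorphism preserves eigenvectors, so it restricts to an algebraic $\mathbb{C}^*$-equivariant isomorphism $\mathcal{Z}\simeq s^*\mathcal{X}$. Likewise the formal map $s$, being equivariant between positively graded bases, is given by graded polynomials and is therefore a global algebraic map $\mathcal{D}\to\mathcal{B}$.
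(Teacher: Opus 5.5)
Your argument is correct and follows essentially the route the paper relies on. The paper gives no proof of its own and cites Namikawa's Cor.~2.4, but Remark~\ref{remark: R^+-embed to Xcal}, together with Lemma~\ref{lemma:uniqueness of f_n^m}, Remark~\ref{remark:uniqueness of f_n^m, C^*-equiv ver} and Corollary~\ref{cor:algebraization of quat str}, indicates the same scheme: a formal classifying map, equivariance of $s$ from uniqueness, vanishing of rational $H^1(\mathbb{C}^*,A_n)$ to make the isomorphism equivariant, and algebraization via $\mathbb{C}^*$-eigenvectors.

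Do state explicitly, as your Step~4 tacitly assumes, that $\mathbb{C}^*$ acts on $\mathcal{D}$ (and on the coordinate ring of $\mathcal{Z}$) with positive weights. The paper's formulation leaves this implicit, but it is genuinely needed. Take $\mathcal{D}=\mathbb{C}^*$ with trivial action and twist $X\times\mathcal{D}$ by a finite-order graded Poisson automorphism of $X$ lying outside the identity component, e.g.\ $(x,y,z)\mapsto(y,-x,-z)$ on $xy=z^3$. This gives an equivariant Poisson deformation for which no such $s$ exists.
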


\begin{remark}\label{remark: R^+-embed to Xcal}
   Since higher group cohomology with respect to rational representations of $\mathbb{R}^+$ also vanishes, the assertion on $\mathbb{C}^*$-actions in Lemma \ref{lemma: C^* equiv embed to Xcal} holds true even if replaced with $\mathbb{R}^+$-actions.
\end{remark}

We prove the following lemma on the structure as a holomorphic fiber bundle of the twistor space corresponding to a hyperkähler cone metric on $X$.

\begin{lemma}\label{lemma: loc trivial for tw sp of HK cone}
  Let $X$ be a conical symplectic variety. Assume that the regular locus $X_{\mathrm{reg}}$ admits an algebraic hyperkähler cone metric $g_0$, and let $(Z\to\mathbb{P}^1,\omega,\tau,\allowbreak \{\ell_x\}_{x\in X_{\mathrm{reg}}})$ be the corresponding algebraic twistor space.
   Let $\mathbb{P}^1=\mathbb{C}^+\cup\mathbb{C}^-$, and let $Z^\pm\to\mathbb{C}^\pm$ be the restrictions of the fibration $Z$ to $\mathbb{C}^{\pm}$.
   Then, the Poisson deformation $Z^\pm$ on $\mathbb{C}^\pm$ is isomorphic to the trivial Poisson deformation $X\times\mathbb{C}$.
\end{lemma}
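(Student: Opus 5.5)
The plan is to use the cone structure of $g_0$ to produce a $\mathbb{C}^*$-action on $Z^+$, which exhibits $Z^+\to\mathbb{C}^+$ as a $\mathbb{C}^*$-equivariant Poisson deformation of $X$. We then apply the universality Lemma~\ref{lemma: C^* equiv embed to Xcal} and show that the classifying map is constant. By the real structure $\tau$, which interchanges $Z^+$ and $Z^-$, it suffices to treat $Z^+$.

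\textbf{Step 1: lifting the cone action.} The homotheties $(\lambda_0)_r$, $r>0$, act on $X_{\mathrm{reg}}$ and scale $g_0$ by $r^k$. We want them to preserve the hyperkähler structure up to this scaling. Moreover, the circle part of $\lambda_0$ is holomorphic for the complex structure of the central fiber and scales $\omega_0$ by a weight. Such a circle action rotates the 2-sphere of complex structures fixing $I_0$, so it acts on $\mathbb{P}^1$ by rotation $u\mapsto e^{i\theta\cdot w}u$.

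By the twistor correspondence (Theorem~\ref{thm:twistor correspondence}), isometries up to scale and triholomorphic/rotating symmetries of $(X_{\mathrm{reg}},g_0)$ lift to holomorphic automorphisms of $Z$ covering Möbius maps of $\mathbb{P}^1$. The homotheties act trivially on $\mathbb{P}^1$, since they preserve $I,J,K$, and rescale $\omega$ by a positive factor. The circle rotates $\mathbb{P}^1$ about $0$ and $\infty$.

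Combining them gives a holomorphic $\mathbb{C}^*$-action on $Z$ preserving $Z_0$ and $Z_\infty$ and covering $u\mapsto \lambda^{m}u$ for some integer $m$. After normalizing, and using algebraicity to extend from $Z^+$ minus the singular locus to all of $Z^+$, this is an algebraic $\mathbb{C}^*$-action on $Z^+$. It restricts to $\lambda_0$ on $Z_0\simeq X$, and the relative form $\omega^+$ is homogeneous.

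\textbf{Step 2: trivializing $Z^+$.} Now $Z^+\to\mathbb{C}^+$ is a $\mathbb{C}^*$-equivariant Poisson deformation of $X$, by condition (1) of Definition~\ref{def:algebraic twistor model}. Lemma~\ref{lemma: C^* equiv embed to Xcal}, possibly in the $\mathbb{R}^+$ form of Remark~\ref{remark: R^+-embed to Xcal} if only the homothety part is convenient, gives a unique equivariant map $s:\mathbb{C}^+\to\mathcal{B}$ with $Z^+\simeq s^*\mathcal{X}$.

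By Theorem~\ref{Namikawa Thm: C^*-action on Yuni}, $\mathbb{C}^*$ acts on $\mathcal{B}$ with positive weights; equivalently, $\mathcal{C}\to\mathcal{B}$ has weight $2$ on $\mathcal{C}$. An equivariant map between the two spaces is a polynomial whose components are weighted homogeneous. The key point is the weight with which the lifted action acts on the base coordinate $u$. If the homothety $\mathbb{R}^+$-action fixes $u$, which it does because homotheties preserve each complex structure, then equivariance forces $s(u)=r\cdot s(u)$ for all $r>0$. Since the weights on $\mathcal{B}$ are positive, this gives $s(u)=0$. Hence $Z^+\simeq X\times\mathbb{C}^+$ as Poisson deformations.

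\textbf{Main obstacle.} The hard step is Step 1: justifying rigorously that the homotheties of the cone lift to holomorphic automorphisms of $Z$ acting trivially on the base, and that the resulting family is algebraic on the whole (singular) fiber rather than only on $X_{\mathrm{reg}}$. For the lift, I would use the twistor correspondence applied to the scaled metric $r^{-k}(\lambda_0)_r^*g_0=g_0$. For the extension across the singular locus, I would use normality of $X$ together with the algebraicity assumptions, via Hartogs-type extension as in the proof of Proposition~\ref{prop: good triple on Yuni}.
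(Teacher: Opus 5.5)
Your argument is essentially the paper's: the cone homotheties act triholomorphically, hence fiberwise on $Z^+$; the $\mathbb{R}^+$-version of the equivariant universality lemma then gives an $\mathbb{R}^+$-equivariant classifying map $s:\mathbb{C}^+\to\mathcal{B}$; and since $\mathbb{R}^+$ fixes $u$ but acts on $\mathcal{B}$ with positive weights, $s\equiv 0$. Drop the circle part of your Step~1 and work with the fiberwise $\mathbb{R}^+$-action alone, as the paper does: it is not needed, and as stated it cannot combine with the base-fixing homotheties into a holomorphic $\mathbb{C}^*$-action covering $u\mapsto\lambda^m u$ with $m\neq 0$.
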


\begin{proof}
   Since the metric $g_0$ is preserved under the $\mathbb{R}^+$-action, the $\mathbb{R}^+$-action acts tri-holomorphically on the hyperkähler variety $(X,g_0)$. Therefore, the corresponding twistor space $Z$ admits a fiberwise holomorphic $\mathbb{R}^+$-action.
   Since $Z^+\to\mathbb{C}$ is a Poisson deformation, letting $\mathcal{X}\to\mathcal{B}$ be the universal Poisson deformation of $X$, by Lemma \ref{lemma: C^* equiv embed to Xcal} and Remark~\ref{remark: R^+-embed to Xcal}, there exists a unique $\mathbb{R}^+$-equivariant map $s:\mathbb{C}\to\mathcal{B}$ determining an $\mathbb{R}^+$-equivariant isomorphism $Z^+\simeq s^*\mathcal{X}$.
   Since the $\mathbb{R}^+\subset \mathbb{C}^*$-action acts with positive weight on the base space $\mathcal{B}$, if $Z^+$ admits a fiberwise $\mathbb{R}^+$-action, then it follows that $s \equiv 0$.
   That is, $Z^+$ is isomorphic to the trivial Poisson deformation $X\times\mathbb{C}$. The same holds for $Z^-$.
\end{proof}

With these preparations, we prove Proposition~\ref{prop:good triple from HK cone metric}.

\begin{proof}[Proof of Proposition~\ref{prop:good triple from HK cone metric}]
   The real structure $\tau$ on the twistor space $Z$ naturally induces a Poisson isomorphism $\tau^+:Z^+\to \overline{Z}^-$.
   Let $\psi:Z^+\setminus Z_0 \to Z^- \setminus Z_\infty$ be the gluing function between $Z^+$ and $Z^-$.
   Consider the holomorphic fiber bundle $X(1)\to\mathbb{P}^1$ obtained by applying the $\mathbb{C}^*$-gluing construction to the $\mathbb{C}^*$-action $\lambda_0$ on the conical symplectic variety $(X,\omega_0)$.
   As defined in the proposition, let $\tau_0:X\to \overline{X}$ be the anti-holomorphic involution induced by the restriction $\tau|_{Z_0}: Z_0 \to \overline{Z}_\infty$ via the natural identifications $Z_0 \simeq X \simeq Z_\infty$.
   Consider the anti-holomorphic map $\tau'\coloneqq\hat{\tau}_0$ induced on the fiber bundle $X(1)$ by Proposition~\ref{prop:anti-hol map on M(1)}. Let $\psi'$ be the gluing function \eqref{gluing fn} of the fiber bundle $X(1)$.
   In what follows, under the isomorphism $Z^\pm \simeq X\times\mathbb{C}$ by Lemma \ref{lemma: loc trivial for tw sp of HK cone}, we proceed with the discussion by regarding the above maps as $\tau^+,{\tau'}^+:X\times\mathbb{C}\to \overline{X\times\mathbb{C}}$ and $\psi,\psi':X\times\mathbb{C}^*\to X\times \mathbb{C}^*$.
   Moreover, as the $\mathbb{C}^*$-action on $X\times\mathbb{C}$, we consider the fiberwise $\mathbb{C}^*$-action naturally induced by the $\mathbb{C}^*$-action on $X$.
   If we show that the two anti-holomorphic maps $\tau^+,{\tau'}^+$ and the two gluing functions $\psi,\psi'$ are both $\mathbb{C}^*$-equivariantly conjugate as Poisson isomorphisms, then we can prove the assertion of the proposition (proven in Assertion (4) below).
   We prove the following four assertions in order.
    \medskip
  \begin{enumerate}[wide=\parindent, itemsep=\medskipamount]
   \item \textit{The identity $(\tau^+)^4=\mathrm{id}_{X\times\mathbb{C}}$}:
   For any point $x\in X_{\mathrm{reg}}$, a twistor line $\ell_x$ passing through $x$ is given. Since the real structure $\tau$ preserves the twistor lines, its differential $\tau_*$ induces a real structure on the normal bundle $N_{\ell_x/Z}\simeq\mathbb{C}^{2n}\otimes\mathcal{O}(1)$. Moreover, one can verify that the real structure on the vector bundle $\mathbb{C}^{2n}\otimes\mathcal{O}(1)$ is conjugate to the real structure obtained from the standard quaternionic structure on $\mathbb{C}^{2n}$ (cf.~Example~\ref{ex:real str on O(1)+O(1)}).
   Therefore, since the differential $\tau^+_*$ satisfies $(\tau^+_*)^4=\mathrm{id}$, we have $(\tau^+)^4=\mathrm{id}_{X\times\mathbb{C}}$.

   \item \textit{The anti-holomorphic maps $\tau^+$ and ${\tau'}^+$ are conjugate}:
   The two maps $\tau^+$ and ${\tau'}^+$ are Poisson isomorphisms, coincide with the map $\tau_0$ on the central fiber, and coincide with the map $u\mapsto -\bar{u}$ on the base space $\mathbb{C}$. Moreover, they satisfy $(\tau^+)^4=({\tau'}^+)^4=\mathrm{id}_{X\times\mathbb{C}}$.
   Consider the Poisson automorphisms $f,f'$ on $X\times \mathbb{C}$ obtained by composing the two maps $\tau^+,{\tau'}^+$ with the complex conjugation on $X\times \mathbb{C}$. By Lemma \ref{lemma:uniqueness of f_n^m} and Remark~\ref{remark:uniqueness of f_n^m, C^*-equiv ver}, the morphisms induced by the two morphisms $f,f'$ on the formal Poisson deformation of $X\times \mathbb{C}$ are $\mathbb{C}^*$-equivariantly conjugate.
   Therefore, by considering their algebraization (cf.~Cor.~\ref{cor:algebraization of quat str}), it is shown that the two morphisms $f, f'$ on $X\times \mathbb{C}$ are $\mathbb{C}^*$-equivariantly conjugate.
   From this, it follows that the two morphisms $\tau^+$ and ${\tau'}^+$ are $\mathbb{C}^*$-equivariantly conjugate.

   \item \textit{The gluing functions $\psi$ and $\psi'$ coincide}:
   The gluing function $\psi$ commutes with the fiberwise $\mathbb{C}^*$-action on the Poisson deformation $(X\times \mathbb{C}^*, \hat{\omega}_0)$ and satisfies $\psi^*\hat{\omega}_0=u^{-2}\hat{\omega}_0$ for the relative holomorphic symplectic form $\hat{\omega}_0$ determined by $\omega_0$ (cf.~Prop.~\ref{prop:diff form on M(1)}).
   Therefore, for a coordinate system $x=(x_i)_{i=1}^N$ of the affine variety $X\subset\mathbb{C}^N$ on which the $\mathbb{C}^*$-action is diagonal, there exists a tuple of $N$ integers $(k_i)_{i=1}^N$ such that $\psi$ can be written in the form
   \begin{equation}
      \psi(x,u)=\left(\left(\frac{1}{u^{k_i}}x_i\right)_{i=1}^N, \frac{1}{u}\right). \label{map:gluing fn almost form}
   \end{equation}
   In particular, $\psi$ is the identity on the fiber over $u=1$.
   On the other hand, the gluing function $\psi'$ of the fiber bundle $X(1)$ is also the identity on the fiber over $u=1$ and coincides with $\psi$ on the base space by definition.
   Moreover, since $\tau^+$ is a morphism determined as the restriction of the real structure $\tau$ on $Z$, the map $\varepsilon\coloneqq\psi^{-1}\circ \tau^+$ defines a real structure on $Z^+\setminus Z_0\simeq X\times\mathbb{C}^*$. Thus, in particular, it satisfies $\varepsilon^4=\mathrm{id}_{X\times\mathbb{C}^*}$.
   On the other hand, since $\tau_0^4=\mathrm{id}_X$, the map $\varepsilon'\coloneqq{\psi'}^{-1}\circ {\tau'}^+$ also satisfies $(\varepsilon')^4=\mathrm{id}_{X\times\mathbb{C}^*}$.
    Since $\tau^+$ and ${\tau'}^+$ are $\mathbb{C}^*$-equivariantly conjugate, an argument similar to that for Assertion (2) implies that $\varepsilon$ and $\varepsilon'$ are conjugate on a neighborhood of $1 \in \mathbb{C}^*$ in the base space.
   Since we know that the map $\psi$ can be written globally in the form \eqref{map:gluing fn almost form}, we have $\psi=\psi'$.

   \item \textit{Proof of the proposition}:
   By Assertion (3), we have $Z\simeq X(1)$ as holomorphic fiber bundles. Therefore, the real structure $\tau$ of $Z$ can be regarded as a real structure on the fiber bundle $X(1)$, and by Assertion (2), the two anti-holomorphic maps $\tau$ and $\tau'$ are $\mathbb{C}^*$-equivariantly conjugate. Therefore, since the anti-holomorphic map $\tau'=\hat{\tau}_0$ is a real structure, by Proposition~\ref{prop:real str on M(1)}, we deduce that the map $\tau_0$ defines a quaternionic structure on $X$.
   Moreover, from the relation $\psi^*\hat{\omega}_0=u^{-2}\hat{\omega}_0$, we have $(\lambda_0)_c^*\omega_0 = c^2\omega_0$ for any $c\in\mathbb{C}^*$.
   Therefore, the tuple $(\lambda_0,\omega_0,\tau_0)$ forms a good triple on $X$.
   Furthermore, from the discussion so far, it is also deduced that the twistor space $Z$ is isomorphic as a twistor model to the twistor cone $X(1)$ determined by this good triple. Consequently, the assertion of the proposition follows. \qedhere
  \end{enumerate}
\end{proof}

\subsubsection{Universality of the Principal Twistor Model}

Let $X$ be a conical symplectic variety. Assume that the regular locus $X_{\mathrm{reg}}$ admits an algebraic hyper\-kähler cone metric $g_0$. Let $Y$ be a crepant resolution of $X$.
In this subsection, we prove the universality of the principal twistor model, which states that the algebraic twistor space corresponding to any hyperkähler metric on the resolution $Y$ asymptotic to $g_0$ is obtained by slicing the principal twistor model.

\medskip
First, we define the asymptotic behavior of a metric on the resolution $Y$ with respect to the cone metric.
\begin{definition}\label{def:asymp to cone metric}
   Let $X$ be a conical symplectic variety. Assume that $X_{\mathrm{reg}}$ admits a cone metric $g_0$.
   Let $Y$ be a crepant resolution of $X$.
   We say that a metric $g$ on the resolution $Y$ is \textit{asymptotic to the metric $g_0$ at infinity on $Y$} if it satisfies the following condition:
   Identifying the regular locus $X_{\mathrm{reg}}$ with a subset of $Y$ naturally, fix an arbitrary point $x_0\in X_{\mathrm{reg}}$. Then, we have
   \[\lim_{r\rightarrow \infty}||g-g_0||_{g_0(r\cdot x_0)}=0, \]
   where $r\cdot x_0$ denotes the action of $r>0$ on $X$, and $||\cdot||_{g_0(r\cdot x_0)}$ denotes the norm induced by the metric $g_0$ at the point $r\cdot x_0\in X_{\mathrm{reg}}$.
\end{definition}

\begin{remark}
   An assumption on the rate of convergence is not required.
\end{remark}

We now prove the following theorem.

\begin{theorem} \label{main thm: universality of PTM}
Let $X$ be a conical symplectic variety. Assume that the regular locus $X_{\mathrm{reg}}$ admits an algebraic hyperkähler cone metric $g_0$.
By Proposition~\ref{prop:good triple from HK cone metric}, the metric $g_0$ induces the good triple on $X$. For this good triple, consider the principal twistor model $(\mathcal{Y}(1)\to\mathcal{C}(2), \omega_P, \tau_P)$ constructed in Proposition~\ref{main prop: PTM of Y}.
Consider an algebraic hyperkähler metric $g$ on $Y$ that is asymptotic to the metric $g_0$ at infinity on $Y$.
Then, there exists a unique real section $s$ of the vector bundle $\mathcal{C}(2)$ such that the twistor space $Z$ corresponding to the metric $g$ is isomorphic as a twistor model to the model $Z_s$ obtained by slicing the principal twistor model $\mathcal{Y}(1)$ along the real section $s$ by Proposition~\ref{prop:slicing of PT}.

\end{theorem}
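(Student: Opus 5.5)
The plan is to mimic the proof of Proposition~\ref{prop:good triple from HK cone metric}, but with the universal Poisson deformation $\mathcal{Y}\to\mathcal{C}$ of $Y$ in place of that of $X$. Let $(Z\to\mathbb{P}^1,\omega,\tau,\{\ell_y\}_{y\in Y})$ be the algebraic twistor space of $g$, and let $Z^\pm\to\mathbb{C}^\pm$ be the two affine charts. By Definition~\ref{def:algebraic twistor model}, each $(Z^\pm,\omega^\pm)$ is an algebraic Poisson deformation of its central fiber, which is $Y$ equipped with the complex structure $I$ (resp.\ $-I$).

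\textbf{Step 1: classifying maps.} By the universality of $\mathcal{Y}\to\mathcal{C}$ (Theorem~\ref{Namikawa Thm:CM and univ Poisson}), there are unique classifying maps $s^\pm:\mathbb{C}^\pm\to\mathcal{C}$ with $Z^\pm\simeq (s^\pm)^*\mathcal{Y}$ as Poisson deformations. Via the period map, $\mathcal{C}\simeq H^2(Y;\mathbb{C})$, and $s^+(u)$ is the class of the fiberwise symplectic form $[\omega_u]$. For a twistor space this class is quadratic in $u$: $[\omega_u]=[\omega_J+i\omega_K]+2u[\omega_I]+u^2[\omega_J-i\omega_K]$ up to normalization. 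Hence $s^+$ is a polynomial of degree $\le 2$.

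\textbf{Step 2: a global section.} On the overlap, the gluing $\psi$ of $Z$ satisfies $\psi^*\omega=u^{-2}\omega$. Uniqueness of classifying maps together with the weight-$2$ action on $\mathcal{C}$ (Theorem~\ref{Namikawa Thm: C^*-action on Yuni}) then gives $s^-(1/u)=u^{-2}s^+(u)$. So $(s^+,s^-)$ glue to a holomorphic section $s$ of $\mathcal{C}(2)$. Compatibility of $\tau$ with $\omega$, the uniqueness of $\sigma$ from the proof (1) of Proposition~\ref{prop:quat str of formal deform}, and Remark~\ref{rem:C^* and real str on base sp} give $\sigma_2\circ s=s\circ\sigma_{\mathrm{ap}}$, so $s$ is real. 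Uniqueness of $s$ is automatic, since the classifying maps are unique.

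\textbf{Step 3: matching gluing functions and real structures.} This is the hard part. From Step 1 we have isomorphisms $\Phi^\pm:Z^\pm\simeq (Z_s)^\pm$, and we must show they can be chosen so that they intertwine both the gluing functions $\psi$ and $\psi_s$ and the real structures $\tau^+$ and $\tau_s^+$. The discrepancy $\Phi^-\circ\psi\circ(\Phi^+)^{-1}\circ\psi_s^{-1}$ is a Poisson automorphism of $(Z_s)^-|_{\mathbb{C}^*}$ over the identity of the base. I would exploit the asymptotic condition (Definition~\ref{def:asymp to cone metric}). Rescaling by $\lambda_r$ and letting $r\to\infty$, the structures of $Z$ converge on $X_{\mathrm{reg}}$ to those of the twistor cone $X(1)$. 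By Proposition~\ref{prop:good triple from HK cone metric}, this identifies $Z$ at infinity with $Y(1)$ and with $Z_s$. Consequently the discrepancy automorphism is trivial near infinity, i.e.\ on the complement of a compact set in each fiber. Using algebraicity, an automorphism that is the identity on an open set is the identity, so the two gluing functions agree.

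For the real structures, I would compare $\tau^+$ and $\tau_s^+$ by the conjugacy argument of Assertion (2) in the proof of Proposition~\ref{prop:good triple from HK cone metric}. Both are Poisson, both cover $u\mapsto-\bar u$ and $s$, both have fourth power equal to the identity (this uses the twistor lines, as in Assertion (1)), and both agree asymptotically. Lemma~\ref{lemma:uniqueness of f_n^m} then gives a formal conjugacy, which algebraizes as in Corollary~\ref{cor:algebraization of quat str}.

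I expect the main obstacle to be the absence of a $\mathbb{C}^*$-action on $Z$: we cannot trivialize $Z^\pm$ equivariantly as in Lemma~\ref{lemma: loc trivial for tw sp of HK cone}. Rigidity must therefore come from the asymptotics. I would try first to show that the automorphism group of the Poisson deformation $(s^+)^*\mathcal{Y}$ acting trivially at infinity is trivial, arguing via $H^0(Y,T_Y)$ and the positivity of weights on $\Gamma(X,\mathcal{O}_X)$.
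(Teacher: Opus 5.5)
Your Steps 1 and 2 match the paper's Assertion (1): the classifying maps $s^\pm$, the relation $s^-(1/u)=u^{-2}s^+(u)$ coming from $\psi^*\omega=u^{-2}\omega$, and reality and uniqueness via universality. The gap is in Step 3.

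\textbf{Matching the gluing functions.} The asymptotic condition of Definition~\ref{def:asymp to cone metric} only gives \emph{decay} of $g-g_0$ along rays. It does not make the discrepancy $\Phi^-\circ\psi\circ(\Phi^+)^{-1}\circ\psi_s^{-1}$ \emph{equal} to the identity outside a compact set, so the identity theorem has nothing to act on. The intermediate claim is also off: $Z_s$ is not identified with $X(1)$ or $Y(1)$ near infinity, since its fibres are the deformed $\mathcal{Y}_{s(u)}$.

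What you need is a mechanism that turns decay into vanishing, and this is what the paper supplies.
\begin{itemize}
\item Pass to the affinizations $Z'^{\pm}$ and to $Z'_s$, the slice of the singular model $\mathcal{X}'(1)$ along $s$.
\item Embed everything in the affine variety $\mathcal{X}'\times\mathbb{C}^{\pm}$.
\item Write $\psi'(x,u)=(u^{-1}\cdot x+f(x,u),u^{-1})$ and $\tau'(x,u)=(-\bar u^{-1}\cdot\tilde{j}(x)+\overline{g(x,u)},-\bar u^{-1})$. Here $f,g$ are regular by algebraicity and tend to $0$ as $|x|\to\infty$ by the asymptotic condition.
\item A regular function on an affine variety that tends to zero at infinity vanishes identically (restrict to affine curves and use compactness of their normalizations). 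Hence $f\equiv g\equiv 0$ and $Z'\simeq Z'_s$.
\item Finally, $Z\simeq Z_s$ follows by Hartogs, since the exceptional sets of $Z\to Z'$ and $Z_s\to Z'_s$ have codimension at least $2$.
\end{itemize}
On the non-affine resolution you have too few global functions for this Liouville-type argument. That is why your fallback via $H^0(Y,T_Y)$ and positive weights stays vague.

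\textbf{Matching the real structures.} This part has further problems.
\begin{itemize}
\item Lemma~\ref{lemma:uniqueness of f_n^m} needs the two maps to agree on the central fibre. Here that agreement (the restrictions of $\tau$ and $\tau_s$ to the fibre over $0$, compared through $\Phi^\pm$) is part of what must be proved, not a hypothesis you have.
\item The algebraization in Corollary~\ref{cor:algebraization of quat str} uses $\mathbb{C}^*$-eigenvectors. As you note, this is unavailable: $(s^+)^*\mathcal{Y}$ carries no $\mathbb{C}^*$-action when $s^+$ is not homogeneous.
\item Even granting a conjugacy, it would be by some automorphism of $Z^+$ unrelated to the identification that matched the gluing functions. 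You would not get a single isomorphism of twistor models.
\end{itemize}
The paper avoids conjugacy arguments entirely. In the fixed affine coordinates, the real structure differs from the standard one by the decaying regular term $g$, which is killed by the same argument as $f$, so gluing and real structure are matched simultaneously.
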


\begin{proof}
   Let $(Z\to\mathbb{P}^1,\omega,\tau)$ be the twistor model determined from the twistor space corresponding to the metric $g$.
   Let $\mathbb{P}^1=\mathbb{C}^+\cup\mathbb{C}^-$, and let $Z^\pm\to\mathbb{C}^\pm$ be the restrictions of the fibration $Z$ to $\mathbb{C}^{\pm}$.
   We prove the following three assertions in order.

  \begin{enumerate}[wide=\parindent, itemsep=\medskipamount]
   \item \textit{A real section $s$ of the vector bundle $\mathcal{C}(2)$ is determined from the twistor space $Z$}:
   By assumption, since $Z^\pm$ is a Poisson deformation, for the universal Poisson deformation $\mathcal{Y}\to\mathcal{C}$ of $Y$, there exists a unique morphism $s^\pm:\mathbb{C}^\pm\to\mathcal{C}$ between the base spaces such that $Z^\pm \simeq (s^\pm)^*\mathcal{Y}$ as Poisson deformations.
   Moreover, letting $\psi$ be the gluing function between the restrictions of $Z^\pm$ to the base space $\mathbb{C}^*$, $\psi$ satisfies $\psi^*\omega = u^{-2}\omega$.
   From this, using the universality of the universal Poisson deformation, it follows that $u^{-2}s^+(u)=s^-(v)$ holds, where $v=u^{-1}$.
   Therefore, a holomorphic section $s$ of the vector bundle $\mathcal{C}(2)$ is determined.
   Letting $Z_s$ be the twistor model obtained by slicing the principal twistor model $\mathcal{Y}(1)\to\mathcal{C}(2)$ along the real section $s$, we have $Z^\pm \simeq Z_s^\pm$ as Poisson deformations.
   Moreover, for the real structure $\tau$, using the universality of the universal Poisson deformation, one can verify that the section $s$ is compatible with the real structure $\sigma_2$ on $\mathcal{C}(2)$ (cf.~Example~\ref{ex:quat str on O(k)}).

   \item \textit{The singular models $Z', Z'_s$ of the twistor models $Z, Z_s$ are isomorphic}:
   By Proposition~\ref{prop:good triple from HK cone metric}, the algebraic hyperkähler cone metric $g_0$ on $X$ induces the good triple $(\lambda, \omega, j)$ on $X$. Consider the principal twistor model $(\mathcal{Y}(1)\to\mathcal{C}(2),\omega_P,\tau_P)$ constructed from this good triple in Proposition~\ref{main prop: PTM of Y}.
   Consider the affinization $Z'^\pm$ of the Poisson deformation $Z^\pm$, and consider the singular twistor model $Z'$ obtained by gluing these.
   For the singular model $\mathcal{X}'(1)\to\mathcal{C}(2)$ of the principal twistor model $\mathcal{Y}(1)$ (cf.~Remark~\ref{remark:sing PTM of X}), let $Z'_s$ be the singular twistor model obtained by slicing $\mathcal{X}'(1)$ along the real section $s$ of $\mathcal{C}(2)$.
   From the choice of the real section $s$, we have $Z'^\pm\simeq Z'^\pm_s$ as Poisson deformations.
   In what follows, we show $Z'\simeq Z'_s$ as twistor models.
   Letting $(X(1)\to\mathbb{P}^1,\hat{\omega},\hat{j})$ be the twistor cone determined by the good triple on $X$, by Proposition~\ref{prop:good triple from HK cone metric}, the twistor cone $X(1)$ is isomorphic to the twistor model determined from the twistor space corresponding to the metric $g_0$.
   Now, both Poisson deformations $Z'^\pm$ and $X\times\mathbb{C}^\pm$ can be naturally embedded into the affine variety $\mathcal{X}'\times\mathbb{C}^\pm$.
   Since the metric $g$ on $Y$ is asymptotic to the metric $g_0$ at infinity on $Y$, the gluing function $\psi'$ between $Z'^\pm$ and the real structure $\tau'$ can be written in the following forms, respectively:
   in terms of coordinates $(x,u)$ on the affine variety $\mathcal{X}'\times\mathbb{C}^*$,
  \begin{equation*}
    \left\{
      \begin{aligned}
         \psi'(x,u) &= \left(\dfrac{1}{u}\cdot x + f(x,u), \dfrac{1}{u}\right), \\
         \tau'(x,u) &= \left( -\dfrac{1}{\bar{u}}\cdot \tilde{j}(x) + \overline{g(x,u)}, -\dfrac{1}{\bar{u}}\right).
      \end{aligned}
    \right.
  \end{equation*}
    Here, we denote by $\lambda \cdot x$ and $\tilde{j}$ the natural $\mathbb{C}^*$-action and the quaternionic structure on the deformation $\mathcal{X}'$, respectively, both of which are induced from the good triple on $X$.
  Moreover, $f$ and $g$ are holomorphic maps on $\mathcal{X}'\times\mathbb{C}^*$ that satisfy $f(x,u)\to 0$ and $g(x,u)\to 0$ as $|x|\to \infty$. Therefore, we must have $f \equiv 0$ and $g\equiv 0$.
  This shows that the gluing function $\psi'$ between $Z'^\pm$ and the real structure $\tau'$ coincide with the gluing function and the real structure of the singular twistor model $Z'_s$, respectively. That is, we have $Z'\simeq Z'_s$ as singular twistor models.

   \item \textit{The twistor models $Z, Z_s$ are isomorphic}:
   The twistor models $Z, Z_s$ are simultaneous resolutions of the model $Z'\simeq Z_s'$.
   Note that the exceptional set of the resolution $Z\to Z'$ has codimension at least $2$.
   Therefore, by Hartogs' extension theorem, we have $Z\simeq Z_s$ as twistor models. \qedhere
  \end{enumerate}
\end{proof}

\section{Applications}\label{sec:Applications}

In this section, we apply the universality of the principal twistor model (Theorem~\ref{main thm: universality of PTM}) to study the moduli space of algebraic hyperkähler structures with asymptotic behavior.
We also introduce metrics constructed by hyperkähler quotients and QALE hyperkähler metrics as specific examples of metrics to which this study is applicable under certain assumptions.

\subsection{Moduli Space of Asymptotic Hyperkähler Structures}

In this subsection, we investigate the moduli space of algebraic hyperkähler structures with asymptotic behavior.
To this end, we first study the structure of the set of twistor lines on the twistor model obtained by slicing the principal twistor model.
Next, we prove an injectivity theorem, showing that the moduli space admits an inclusion into a finite-dimensional real vector space.
Finally, when $X$ has an isolated singularity, we establish a certain openness property of the twistor space in the principal twistor model, allowing us to determine the dimension of the moduli space.

\subsubsection{Structure of the Set of Twistor Lines}

We establish the following notation.
\begin{notation}\label{notation: sp of tw lines}
   Let $X$ be a conical symplectic variety. Assume that the regular locus $X_{\mathrm{reg}}$ admits an algebraic hyperkähler cone metric $g_0$.
   Let $Y$ be a crepant resolution of $X$.
   Consider the following setup and notation:
  \begin{enumerate}
   \item Let $(\mathcal{Y}(1)\to\mathcal{C}(2), \omega_P, \tau_P)$ be the principal twistor model constructed in Theorem~\ref{main thm: universality of PTM}.
   \item Let $H^0(\mathbb{P}^1,\mathcal{C}(2))^{\sigma_2}$ denote the real vector space of real sections of the vector bundle $\mathcal{C}(2)$.
   \item Let $\Gamma(\mathbb{P}^1,P)^{\tau_P}$ denote the set of real sections of the principal twistor model $P=\mathcal{Y}(1)$ with respect to the real structure $\tau_P$.
   \item Let $Z_s$ denote the twistor model obtained by slicing the principal twistor model $P$ along a real section $s\in H^0(\mathbb{P}^1,\mathcal{C}(2))^{\sigma_2}$.
   \item Let $\mathcal{T}_s$ be the set of twistor lines on the twistor model $Z_s$.
   \item Let $\mathcal{T} \subset \Gamma(\mathbb{P}^1,P)^{\tau_P}$ be the subset defined by $\mathcal{T}\coloneqq\sqcup_s\mathcal{T}_s$.
  \end{enumerate}
\end{notation}

The goal of this subsection is to investigate the structure of the set $\mathcal{T}$ of twistor lines.
To this end, we establish the following lemma on the stability of twistor lines.

\begin{lemma}[cf.~{\cite[Lemma 2.14, Theorem 2.16]{Bielawski-Foscolo21}} ]\label{lemma:stability of tw lines}
   Consider the setup in Notation~\ref{notation: sp of tw lines}.
   Then, the subset $\mathcal{T}=\sqcup_s\mathcal{T}_s \subset \Gamma(\mathbb{P}^1,P)^{\tau_P}$ is a smooth manifold of real dimension $4n+3d$,
   where $4n=\dim_\mathbb{R}X$ and $d = \dim H^2(Y;\mathbb{C})$.
\end{lemma}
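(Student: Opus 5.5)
The plan is deformation theory of sections of $\varphi_Y(1)\circ$, in the style of Kodaira's theorem as used in \cite{Bielawski-Foscolo21}. Take a twistor line $\ell\in\mathcal{T}_{s_0}$. It is a holomorphic section of $\pi_P:P\to\mathbb{P}^1$ contained in $Z_{s_0}$. First I will compute its normal bundle $N_{\ell/P}$. The differential of the submersion $\varphi_Y(1)$ gives an exact sequence
\[
0\to N_{\ell/Z_{s_0}}\to N_{\ell/P}\to s_0^*T_{\mathcal{C}(2)/\mathbb{P}^1}\simeq \mathcal{C}\otimes\mathcal{O}(2)\to 0 .
\]
Here the left term is $\mathbb{C}^{2n}\otimes\mathcal{O}(1)$ by (T4). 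Because $\mathrm{Ext}^1(\mathcal{O}(2),\mathcal{O}(1))=H^1(\mathbb{P}^1,\mathcal{O}(-1))=0$, the sequence splits, so $N_{\ell/P}\simeq \mathcal{O}(1)^{2n}\oplus\mathcal{O}(2)^d$. Hence $H^1(\ell,N_{\ell/P})=0$ and $h^0=4n+3d$. By Kodaira's theorem, the space of holomorphic sections of $\pi_P$ near $\ell$ is then a complex manifold of dimension $4n+3d$, with tangent space $H^0(N_{\ell/P})$.

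The real structure $\tau_P$ induces an antiholomorphic involution on this complex manifold. Its fixed locus $\Gamma(\mathbb{P}^1,P)^{\tau_P}$ near $\ell$ is therefore a real manifold of real dimension $4n+3d$, with tangent space $H^0(N_{\ell/P})^{\tau_P}$. This needs two checks: the fixed locus is nonempty (it contains $\ell$), and the induced involution on $H^0(N)$ is a genuine real structure. For the second check I will use the splitting above. On the $\mathcal{O}(1)^{2n}$ part the structure is the standard one from Example~\ref{ex:real str on O(1)+O(1)}, which gives $4n$ real dimensions. On the $\mathcal{O}(2)^d$ part it is $\sigma_2$, and real sections of $\mathcal{O}(2)$ form a real $3$-dimensional space, which gives $3d$.

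Next I will show that near $\ell$ every real section is a twistor line, so $\mathcal{T}$ is open in $\Gamma(\mathbb{P}^1,P)^{\tau_P}$. Let $\ell'$ be a real section close to $\ell$. Then $s:=\varphi_Y(1)\circ\ell'$ is a real section of $\mathcal{C}(2)$, and $\ell'\subset Z_s$. The normal bundle $N_{\ell'/Z_s}$ is a small deformation of $\mathcal{O}(1)^{2n}$. Since that bundle is rigid ($H^1(\mathrm{End})=0$), $N_{\ell'/Z_s}$ is again $\mathcal{O}(1)^{2n}$, so $\ell'$ satisfies both conditions (a) and (b) of (T4).

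The step I expect to be the main obstacle is the global one. It must be shown that, for each $s$ with $\mathcal{T}_s\neq\emptyset$, the set $\mathcal{T}_s$ is exactly the family of lines foliating $Z_s$, and that $\mathcal{T}$ is a manifold rather than only locally so. Concretely, the open set of real sections must be exactly $\sqcup_s\mathcal{T}_s$, with no stray components. I would handle this as follows. Fix $s$ and define $\mathcal{T}_s$ as the set of real sections of $Z_s$ with the correct normal bundle. This set is an open subset of the real-analytic manifold of real sections, hence smooth. The map $\mathcal{T}\to H^0(\mathbb{P}^1,\mathcal{C}(2))^{\sigma_2}$ given by composition with $\varphi_Y(1)$ is a submersion, because the splitting shows $H^0(N_{\ell/P})^{\tau}\to H^0(\mathcal{C}(2))^{\sigma_2}$ is surjective. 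Its fibres are the $\mathcal{T}_s$, each of dimension $4n$. This argument follows \cite[Lemma 2.14, Theorem 2.16]{Bielawski-Foscolo21}.
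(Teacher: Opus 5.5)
Your proposal is correct and follows essentially the same route as the paper: the sequence $0\to N_{\ell/Z_s}\to N_{\ell/P}\to\mathcal{C}(2)\to 0$, the vanishing of $H^1(N_{\ell/P})$, Kodaira's theorem, and the count $4n+3d$. The differences are minor: the paper obtains $H^1(N_{\ell/P})=0$ from the long exact sequence rather than from your splitting, and it leaves implicit both the passage to $\tau_P$-fixed sections and the openness of the twistor-line condition, which you spell out. Your final ``global obstacle'' is not actually an issue, since $\mathcal{T}_s$ is by definition the set of all twistor lines on $Z_s$ and being a manifold is a local property.
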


\begin{proof}
   Let $\varphi(1):\mathcal{Y}(1)\to\mathcal{C}(2)$ denote the map of the principal twistor model.
   For a twistor line $\ell\in \mathcal{T}_s$, we have the following short exact sequence:
   \begin{equation}
    0\to N_{\ell/Z_s}\to N_{\ell/P}\xrightarrow{d_\ell\varphi(1)} \mathcal{C}(2)\to 0. \label{seq:exact seq of ell in P}
   \end{equation}
   Consider the long exact sequence induced by this short exact sequence (\ref{seq:exact seq of ell in P}):
   \[H^1(N_{\ell/Z_s})\to H^1(N_{\ell/P})\to H^1(\mathcal{C}(2))=0.\]
   Now, since the section $\ell$ is a twistor line, we have $N_{\ell/Z_s}\simeq\mathbb{C}^{2n}\otimes \mathcal{O}(1)$. Thus, since $H^1(N_{\ell/Z_s})=0$, we have $H^1(N_{\ell/P})=0$ by the long exact sequence.
   Therefore, by Kodaira's theorem, it follows that the set $\mathcal{T}$ is a smooth manifold.
   Moreover, we obtain another long exact sequence induced by the short exact sequence (\ref{seq:exact seq of ell in P}):
   \[0\to H^0(N_{\ell/Z_s})\to H^0(N_{\ell/P})\to H^0(\mathcal{C}(2))\to H^1(N_{\ell/Z_s})=0.\]
   Note that $\dim H^0(N_{\ell/Z_s}) =4n$, and $\dim H^0(\mathcal{C}(2)) = 3d.$
   From this, it follows that $\mathcal{T}$ is a smooth manifold of real dimension $4n+3d$. \qedhere
\end{proof}

To investigate the structure of the set $\mathcal{T}$ of twistor lines, we also use the following result.
\begin{lemma}[{P. Slodowy \cite[p.49, \S 4.2 Remark]{Slodowy80}}]\label{Slodowy lemma: C^inf trivialization of Ycal}
Let $X$ be a conical symplectic variety. Assume that $X$ has a crepant resolution $Y$. Then, the universal Poisson deformation $\mathcal{Y}\to\mathcal{C}$ of $Y$ admits a trivialization $\mathcal{Y}\simeq Y\times\mathcal{C}$ as a smooth manifold.
\end{lemma}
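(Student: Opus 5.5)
The plan is to prove the statement by an Ehresmann-type argument for the non-proper submersion $\varphi_Y:\mathcal{Y}\to\mathcal{C}$. Properness is replaced by a weighted-homogeneous exhaustion function, and the $\mathbb{R}^+\subset\mathbb{C}^*$-action of Theorem~\ref{Namikawa Thm: C^*-action on Yuni} controls the behaviour at infinity.

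\emph{Setup.} First, $\varphi_Y$ is a smooth morphism: $Y$ is smooth, so the Poisson deformation $\mathcal{Y}\to\mathcal{C}$ is a smooth family and $\varphi_Y$ is a holomorphic submersion. Next, embed the affinization $\mathcal{X}'\subset\mathbb{C}^N\times\mathcal{C}$ equivariantly, with diagonal $\mathbb{C}^*$-action of positive weights $w_i$ on the coordinates $x_i$; this uses Theorem~\ref{Namikawa Thm:CM and univ Poisson} and positivity of the grading. Set
\[
\rho(x)=\sum_i |x_i|^{2m/w_i},
\]
with $m$ a common multiple of the $w_i$, so that $\rho(r\cdot x)=r^{2m}\rho(x)$ for $r>0$. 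Put $f=\rho\circ\nu$ on $\mathcal{Y}$. Then $f$ is smooth, $(\varphi_Y,f):\mathcal{Y}\to\mathcal{C}\times\mathbb{R}$ is proper because $\nu$ is projective, and $e(f)=2m f$, where $e$ is the Euler vector field of the $\mathbb{R}^+$-action.

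\emph{Key estimate.} The claim to prove is that there is $\delta>0$ such that on the compact set $K=\{f=1,\ |\varphi_Y|\le\delta\}$ the restriction of $f$ to each fiber of $\varphi_Y$ has no critical points. I expect this step to be the main obstacle, since $X$ may have non-isolated singularities, so the level set $\{\rho=1\}\cap X$ need not be smooth and one cannot argue on $\mathcal{X}'$. The remedy is to work upstairs on $\mathcal{Y}$. On the central fiber $Y$, the field $e$ is tangent to $Y$ and satisfies $e(f)=2m\neq 0$ on $\{f=1\}\cap Y$. Near $Y$, write $e=e_{\mathrm{vert}}+e_{\mathrm{hor}}$ using any smooth connection for the submersion; then $e_{\mathrm{hor}}$ is the lift of the weight-$2$ radial field on $\mathcal{C}$, whose size is $O(|c|)$. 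By compactness of $\{f=1\}\cap Y$ (properness of $\nu$ over $\{\rho=1\}$) and continuity, $e_{\mathrm{vert}}(f)>m$ on $K$ for small $\delta$. Rescaling by $\mathbb{R}^+$ then shows that $f|_{\varphi_Y^{-1}(c)}$ has no critical points on the region $\{f\ge R\,|c|^{m}\}$ for a suitable constant $R$.

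\emph{Conclusion.} Fix a ball $B\subset\mathcal{C}$. Lift coordinate vector fields on $B$ to horizontal fields on $\varphi_Y^{-1}(B)$ that are tangent to the level sets of $f$ wherever $f\ge R'$ for a large $R'$; this is possible by the estimate just proved, patched with a partition of unity against an arbitrary lift on the region $\{f\le R'\}$, which is compact over $B$ because $(\varphi_Y,f)$ is proper. The flows of these lifts preserve $f$ outside a compact set and are compactly controlled inside it, so they are complete over $B$ and give a diffeomorphism $\varphi_Y^{-1}(B)\simeq Y\times B$. Finally, since the $\mathbb{R}^+$-action maps $\varphi_Y^{-1}(B)$ diffeomorphically onto $\varphi_Y^{-1}(r^2B)$ and $\mathcal{C}$ is the union of these, conjugating by the action transports the trivialization; alternatively, choose $B=\mathcal{C}$ directly by making the estimate uniform via homogeneity. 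This yields $\mathcal{Y}\simeq Y\times\mathcal{C}$ as smooth manifolds.
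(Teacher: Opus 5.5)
Your argument is correct, and it is essentially the standard $\mathbb{C}^*$-equivariant Ehresmann argument on which Slodowy's remark rests: a homogeneous proper exhaustion function, fiberwise transversality near the central fiber, and then spreading out by the $\mathbb{R}^+$-action. The paper gives no proof of its own beyond citing Slodowy and asserting that his argument transfers, so you have in effect supplied the omitted details.

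Two cosmetic points need tightening. First, the fiberwise non-criticality region should be $\{f\ge R|c|^m\}\cap\{f>0\}$, since over $c=0$ the function $f$ attains its minimum on $\nu^{-1}$ of the vertex. Second, the final globalization is best made explicit via the diffeomorphism $y\mapsto\tilde{\lambda}_{Y,r(\varphi_Y(y))}(y)$ of $\mathcal{Y}$ onto $\varphi_Y^{-1}(B)$, where $B$ is the unit ball and $r(c)^2=(1+|c|^2)^{-1/2}$; this covers the diffeomorphism $c\mapsto c/\sqrt{1+|c|^2}$ of $\mathcal{C}$ onto $B$. That is cleaner than "conjugating" trivializations over the union of the sets $r^2B$, which are not a priori compatible.
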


\begin{remark}
   Although the original statement in \cite{Slodowy80} is formulated for the Slodowy slice of a complex semisimple Lie algebra, the argument applies directly to our setting.
\end{remark}

With these preparations, we prove the following proposition.

\begin{proposition}\label{prop:str of Tw}
   Consider the setup in Notation~\ref{notation: sp of tw lines} and
   the following two maps:
  \begin{enumerate}[leftmargin=*, itemsep=\medskipamount]
   \item Consider the evaluation map $ev_0:\Gamma(\mathbb{P}^1,P)^{\tau_P}\to\mathcal{Y}$ that maps a real section $\ell\in \Gamma(\mathbb{P}^1,P)^{\tau_P}$ to its value $\ell(0)$. Fix a smooth trivialization $\mathcal{Y}\simeq Y\times\mathcal{C}$ of the universal Poisson deformation $\mathcal{Y}$ obtained by Lemma \ref{Slodowy lemma: C^inf trivialization of Ycal}. Let $pr_1$ be the projection from the direct product $Y\times\mathcal{C}$ to $Y$. Then, we define a smooth map $ev:\Gamma(\mathbb{P}^1,P)^{\tau_P}\to Y$ by $ev\coloneqq pr_1\circ ev_0$.

   \item Let $\varphi(1)_*: \Gamma(\mathbb{P}^1,P)^{\tau_P}\to H^0(\mathbb{P}^1,\mathcal{C}(2))^{\sigma_2}$ be the push-forward map that maps a real section $\ell\in \Gamma(\mathbb{P}^1,P)^{\tau_P}$ to the composition $\varphi(1)\circ \ell$ with the map $\varphi(1):P\to\mathcal{C}(2)$ of the principal twistor model.
  \end{enumerate}
  Then, the smooth map
  \[\Phi=(ev, \varphi(1)_*):\mathcal{T}\to Y\times H^0(\mathbb{P}^1,\mathcal{C}(2))^{\sigma_2}\]
  obtained by restricting the maps $ev$ and $\varphi(1)_*$ to the smooth manifold $\mathcal{T} \subset \Gamma(\mathbb{P}^1,P)^{\tau_P}$ is a locally finite covering map.

\end{proposition}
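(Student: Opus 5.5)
The plan is to show that $\Phi$ is a local diffeomorphism, then check a properness-type condition so that it becomes a covering with finite fibres over small neighbourhoods. First I compare dimensions. By Lemma~\ref{lemma:stability of tw lines}, $\mathcal{T}$ is a smooth manifold of real dimension $4n+3d$. The target $Y\times H^0(\mathbb{P}^1,\mathcal{C}(2))^{\sigma_2}$ has real dimension $4n+3d$ too, because $H^0(\mathbb{P}^1,\mathcal{C}(2))\simeq\mathcal{C}\otimes\mathbb{C}^3$ and its $\sigma_2$-real part has real dimension $3d$. It therefore suffices to prove that $d\Phi$ is injective at every $\ell\in\mathcal{T}$.

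Fix $\ell\in\mathcal{T}_s$. By the proof of Lemma~\ref{lemma:stability of tw lines}, $T_\ell\mathcal{T}$ is the $\tau_P$-real part of $H^0(N_{\ell/P})$, and we have the exact sequence
\[0\to H^0(N_{\ell/Z_s})^{\mathrm{real}}\to T_\ell\mathcal{T}\xrightarrow{d(\varphi(1)_*)} H^0(\mathbb{P}^1,\mathcal{C}(2))^{\sigma_2}\to 0.\]
Suppose $\xi\in\ker d\Phi$. Then $d(\varphi(1)_*)\xi=0$, so $\xi$ lies in $H^0(N_{\ell/Z_s})^{\mathrm{real}}\simeq H^0(\mathbb{C}^{2n}\otimes\mathcal{O}(1))^{\mathrm{real}}$. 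Such a section, being vertical, satisfies $d(ev_0)\xi=\xi(0)\in T_{\ell(0)}\mathcal{Y}_{s(0)}$. The smooth trivialization of Lemma~\ref{Slodowy lemma: C^inf trivialization of Ycal} restricts to a diffeomorphism from each fibre $\mathcal{Y}_c$ onto $Y$, so $d(pr_1)$ is injective on fibre tangent vectors. Hence $d(ev)\xi=0$ forces $\xi(0)=0$. A real section of $\mathbb{C}^{2n}\otimes\mathcal{O}(1)$ vanishing at $0$ also vanishes at $\sigma_{\mathrm{ap}}(0)=\infty$, and a section of $\mathcal{O}(1)$ with two zeros is identically zero. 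So $\xi=0$, and by the inverse function theorem $\Phi$ is a local diffeomorphism.

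To upgrade this to a locally finite covering, I would show that $\Phi$ is proper onto its image, or at least that it has the path-lifting property. The argument rests on two ingredients. First, over a fixed $s$, twistor lines through a point of $Z_s$ form a discrete set, and the real structure together with the $\mathcal{O}(1)$ normal bundle makes the evaluation map an open immersion on each connected component. Second, I would use compactness: a sequence $\ell_k\in\mathcal{T}$ with $\Phi(\ell_k)$ converging should have a subsequence of sections converging in $\Gamma(\mathbb{P}^1,P)$. This should follow from the fact that the images of real sections of $P$ lie in the compact set $\varphi(1)^{-1}(\text{bounded})$ intersected with the preimage of a compact subset of $Y$, combined with a Bishop-type area bound coming from the degree of $\ell^*$ of the fibre class. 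The limit is again a real section with normal bundle $\mathbb{C}^{2n}\otimes\mathcal{O}(1)$, since the normal bundle splitting type is upper semicontinuous and the degree is fixed. Hence the limit lies in $\mathcal{T}$, so $\Phi$ is proper on components and thus a finite covering there.

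The main obstacle is this properness step. $P$ is noncompact and twistor lines could escape to infinity in the fibres, so I expect to need the conical structure to control sections. Concretely, I would use the Type~1 $\mathbb{C}^*$-action of Proposition~\ref{prop:C^*- on M(1)} to rescale $\ell_k$ into a bounded region and reduce to convergence there. Local finiteness of the fibres then follows from discreteness of fibres of a local diffeomorphism combined with properness.
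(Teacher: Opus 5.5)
Your first half is correct, and it supplies the verification the paper only asserts (``one can verify that $\Phi$ is a submersion''). The real exact sequence, the injectivity of $d(pr_1)$ on vertical vectors for the fibre-preserving Slodowy trivialization, and the fact that a real section of $\mathcal{O}(1)^{\oplus 2n}$ vanishing at $0$ also vanishes at $\sigma_{\mathrm{ap}}(0)=\infty$ (hence identically) together show that $d\Phi$ is injective. With $\dim_{\mathbb{R}}\mathcal{T}=4n+3d$, this makes $\Phi$ a local diffeomorphism with discrete fibres.

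The gap is in the second half, which is also your only source for finiteness of the fibres. The step that fails is ``the limit is again a real section with normal bundle $\mathbb{C}^{2n}\otimes\mathcal{O}(1)$''. Semicontinuity works against you: $h^0(N_\ell(-2))$ can only jump \emph{up} in a limit. So the balanced splitting $\mathcal{O}(1)^{\oplus 2n}$ is an open condition, and a limit of twistor lines may have normal bundle such as $\mathcal{O}(2)\oplus\mathcal{O}\oplus\mathcal{O}(1)^{\oplus(2n-2)}$. Fixing the degree does not rule this out (it is always $2n$, by the twisted symplectic form), so $\mathcal{T}$ is not closed among real sections.

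The compactness input is also missing. $ev$ only controls $\ell(0)$, so nothing prevents $\ell_k(u)$ from escaping to infinity for other $u$, and there is no area bound on the noncompact $P$. The Type~1 action does not help: for $|\lambda|\neq 1$ it moves $u\mapsto\lambda^2u$ without commuting with $\sigma_{\mathrm{ap}}$, so it destroys reality. The fibrewise Type~2 $\mathbb{R}^+$-action does preserve $\mathcal{T}$ (Lemma~\ref{lemma:R^+-action on Tw}), but it rescales $s$ as well.

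In fact properness cannot hold in general. A proper local diffeomorphism into the connected manifold $Y\times H^0(\mathbb{P}^1,\mathcal{C}(2))^{\sigma_2}$ has open and closed image, hence is surjective. The paper, however, remarks right after this proposition that $\Phi$ is not surjective in general.

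The idea you are missing is the paper's route to finiteness, which needs no compactness. $\Phi^{-1}(y,s)$ is the set of twistor lines of $Z_s$ through one fixed point of the fibre over $0$. It is discrete, by your local-diffeomorphism argument or by the foliation property of twistor lines. Since $Z_s$ is algebraic (charts $\mathcal{Y}\times\mathbb{C}$, gluing by the algebraic $\mathbb{C}^*$-action, algebraic real structure), these lines are solutions of algebraic equations. A discrete semialgebraic set has finitely many points, so the fibre is finite.

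The paper then concludes directly from ``local diffeomorphism with finite fibres'', and in the paper's usage that is all a ``locally finite covering'' means here. So your properness programme is unnecessary for the statement as proved, and in the form you sketch it is false. Genuine covering properties such as path lifting would require more than either argument provides.
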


\begin{proof}
   Fix a twistor line $\ell \in \mathcal{T}$, and let $(y,s)\coloneqq\Phi(\ell)$. Then, the fiber $\Phi^{-1}(y,s)$ over the point $(y,s)$ is precisely the set of twistor lines passing through the point $\ell(0)\in Z_{s(0)}$ on the central fiber on the twistor model $Z_s$.
   Here, using the fact that twistor lines locally form a foliation of the twistor space (cf.\cite[Theorem 3.3]{HKLR}), it follows that the fiber $\Phi^{-1}(y,s)$ is a discrete set.
   In particular, since twistor lines on the twistor model $Z_s$ are described as solutions to algebraic equations, the fiber $\Phi^{-1}(y,s)$ is a finite set.
   Then, one can verify that the map $\Phi$ is a submersion.
   Moreover, by Lemma \ref{lemma:stability of tw lines}, the real dimension of the smooth manifold $\mathcal{T}$ is $4n+3d$, which coincides with the dimension of the codomain of the map $\Phi$.
   Therefore, by the inverse function theorem, the map $\Phi$ is a local diffeomorphism.
   From the above, we conclude that the map $\Phi$ defines a locally finite covering. \qedhere
\end{proof}

\begin{remark}\label{remark:ramification of Phi}
   The map $\Phi$ is not surjective in general.
\end{remark}

For the discussion in the next subsection, we introduce a natural $\mathbb{R}^+$-action on the set $\mathcal{T}$.

\begin{lemma}\label{lemma:R^+-action on Tw}
   Consider the setup in Notation~\ref{notation: sp of tw lines},
   and the fiberwise $\mathbb{C}^*$-action $\tilde{\lambda}$ (\textit{Type 2}) on the principal twistor model $\mathcal{Y}(1)$ (cf.~Prop.~\ref{prop:C^*- on M(1)}).
   Then, writing the action of multiplication by $\varepsilon$ as $\tilde{\lambda}_\varepsilon$ for any positive number $\varepsilon>0$, we have $\tilde{\lambda}_\varepsilon \circ \ell \in \mathcal{T}_{\varepsilon^2s}$ for any twistor line $\ell \in \mathcal{T}_s$.
   That is, the fiberwise $\mathbb{R}^+\subset\mathbb{C}^*$-action preserves the set $\mathcal{T}$.
\end{lemma}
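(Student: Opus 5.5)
The plan is to show that for each $\varepsilon>0$ the map $\tilde{\lambda}_\varepsilon$ restricts to an isomorphism of twistor models $Z_s\xrightarrow{\sim} Z_{\varepsilon^2 s}$ over $\mathbb{P}^1$, up to rescaling $\omega$ by a positive constant. An isomorphism of this kind carries real holomorphic sections with normal bundle $\mathbb{C}^{2n}\otimes\mathcal{O}(1)$ to real holomorphic sections with the same normal bundle, so it carries $\mathcal{T}_s$ into $\mathcal{T}_{\varepsilon^2 s}$.

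\emph{Step 1: $\tilde{\lambda}_\varepsilon$ covers scaling by $\varepsilon^2$ on $\mathcal{C}(2)$.} By Proposition~\ref{prop:C^*- on M(1)}, the Type 2 action is a well-defined holomorphic $\mathbb{C}^*$-action on $P=\mathcal{Y}(1)$ that preserves each fiber over $\mathbb{P}^1$. By Proposition~\ref{prop: C^* equiv map on M(1)}, applied to the $\mathbb{C}^*$-equivariant map $\varphi_Y:\mathcal{Y}\to\mathcal{C}$, the bundle map $\varphi(1)$ intertwines this action with the Type 2 action on $\mathcal{C}(2)$. By Remark~\ref{rem:C^* and real str on base sp}, the action on $\mathcal{C}$ is scalar multiplication of weight $2$, so on $\mathcal{C}(2)$ the map $\tilde{\lambda}_\varepsilon$ acts as fiberwise multiplication by $\varepsilon^2$. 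Consequently $\varphi(1)\circ\tilde{\lambda}_\varepsilon\circ\ell=\varepsilon^2(\varphi(1)\circ\ell)$. It follows that $\tilde{\lambda}_\varepsilon$ maps $Z_s=\varphi(1)^{-1}(s(\mathbb{P}^1))$ biholomorphically onto $Z_{\varepsilon^2 s}$, with inverse $\tilde{\lambda}_{\varepsilon^{-1}}$, compatibly with the projections to $\mathbb{P}^1$. In particular $\tilde{\lambda}_\varepsilon\circ\ell$ is again a holomorphic section of $\pi_{\varepsilon^2 s}$.

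\emph{Step 2: compatibility with the real structures.} For real $\varepsilon$ I will check $\tau_P\circ\tilde{\lambda}_\varepsilon=\tilde{\lambda}_\varepsilon\circ\tau_P$ in the chart $(x,u)$, using formula \eqref{defeq:hat f} for $\tau_P=\widehat{\tilde{j}}_Y$. By condition (3) of Definition~\ref{def:good triple on PD} we have $\tilde{j}_Y(\varepsilon\cdot x)=\bar{\varepsilon}\cdot\tilde{j}_Y(x)=\varepsilon\cdot\tilde{j}_Y(x)$, and the $\mathbb{C}^*$-action is abelian. Together these give
\[
\tau_P(\varepsilon\cdot x,u)=\Bigl(-\tfrac{1}{\bar u}\cdot\varepsilon\cdot\tilde{j}_Y(x),-\tfrac{1}{\bar u}\Bigr)=\tilde{\lambda}_\varepsilon\,\tau_P(x,u),
\]
and the same computation works in the other chart. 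The analogous statement $\sigma_2(\varepsilon^2 c)=\varepsilon^2\sigma_2(c)$ holds by Example~\ref{ex:quat str on O(k)}, so $\varepsilon^2 s$ is again a real section. Hence if $\ell$ is a real section, meaning $\tau_P\circ\ell=\ell\circ\sigma_{\mathrm{ap}}$, then so is $\tilde{\lambda}_\varepsilon\circ\ell$. For completeness I will also note that $\tilde{\lambda}_\varepsilon^*\omega_P=\varepsilon^2\omega_P$, by condition (1) and Proposition~\ref{prop:diff form on M(1)}. This positive rescaling does not affect the definition of a twistor line.

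\emph{Step 3: normal bundle.} Since $\tilde{\lambda}_\varepsilon:Z_s\to Z_{\varepsilon^2s}$ is a biholomorphism sending $\ell$ to $\ell'=\tilde{\lambda}_\varepsilon\circ\ell$, its differential induces an isomorphism $N_{\ell/Z_s}\simeq N_{\ell'/Z_{\varepsilon^2 s}}$ of holomorphic bundles over $\mathbb{P}^1$. Hence $N_{\ell'/Z_{\varepsilon^2 s}}\simeq\mathbb{C}^{2n}\otimes\mathcal{O}(1)$, and therefore $\ell'\in\mathcal{T}_{\varepsilon^2 s}$.

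The main point needing care is Step 2. The Type 2 action must commute with $\tau_P$ in both charts and across the gluing map \eqref{gluing fn}, and this is exactly where the restriction to real $\varepsilon$ enters, since it makes $\bar{\varepsilon}=\varepsilon$. The remaining steps are formal consequences of functoriality of the $\mathbb{C}^*$-gluing construction.
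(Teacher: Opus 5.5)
Your proposal is correct and follows the same route as the paper. Both arguments use three facts: the weight-$2$ action on $\mathcal{C}$ makes $\tilde{\lambda}_\varepsilon\circ\ell$ a holomorphic section of $Z_{\varepsilon^2 s}$; the action is holomorphic, so it preserves the normal bundle; and $\tau_P$ commutes with the real fiberwise action, so the reality condition is preserved. The difference is only in detail: you write out the chart computation (using condition (3) and $\bar{\varepsilon}=\varepsilon$) and the check that $\varepsilon^2 s$ is again real, which the paper covers with ``by definition.''
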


\begin{proof}
   Since the $\mathbb{R}^+\subset \mathbb{C}^*$-action on the universal Poisson deformation $\mathcal{Y}\to\mathcal{C}$ acts with weight $2$ on the base space, for a twistor line $\ell \in \mathcal{T}_s$, the section $\tilde{\lambda}_\varepsilon \circ \ell$ defines a holomorphic section on the twistor model $Z_{\varepsilon^2s}$. Since the $\mathbb{R}^+$-action is holomorphic, it preserves the normal bundle.
   Moreover, the real structure $\tau_P$ on the principal twistor model $\mathcal{Y}(1)$ commutes with the fiberwise $\mathbb{R}^+$-action by definition. Therefore, for a twistor line $\ell \in \mathcal{T}_s$, the real condition
   \[\tau_P \circ \tilde{\lambda}_\varepsilon \circ \ell = \tilde{\lambda}_\varepsilon \circ \ell \circ \sigma_{\text{ap}}\]
   holds (cf.~Def.~\ref{def:twistor space}~(T4)).
   Consequently, the assertion of the lemma follows. \qedhere
\end{proof}

\subsubsection{Injectivity theorem for the moduli space}\label{subsec:injectivety theorem for moduli}

First, we prove the following proposition.
\begin{proposition}\label{prop:uniqueness of tw lines}
   Consider the setup in Notation~\ref{notation: sp of tw lines}.
   Assume that there exists a family of twistor lines $\{\ell_{y,s}\}_{y\in Y}$ on the twistor model $Z_s$, so that $Z_s$ is a twistor space. If the hyperkähler metric $g_s$ on $Y$ corresponding to the twistor space $Z_s$ is asymptotic to the metric $g_0$, then the family of twistor lines $\{\ell_{y,s}\}_{y\in Y}$ is uniquely determined.
\end{proposition}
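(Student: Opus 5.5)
The plan is to compare two families of twistor lines on the same twistor model $Z_s$ and show they coincide. Suppose $\{\ell_{y,s}\}_{y\in Y}$ and $\{\ell'_{y,s}\}_{y\in Y}$ are two families making $Z_s$ a twistor space, with corresponding hyperkähler metrics $g_s$ and $g'_s$ on $Y$, both asymptotic to $g_0$. Both families lie in $\mathcal{T}_s=\mathcal{T}\cap\varphi(1)_*^{-1}(s)$. By Proposition~\ref{prop:str of Tw}, restricting $\Phi$ to $\mathcal{T}_s$ gives a locally finite covering $ev:\mathcal{T}_s\to Y$ onto its image. Each family foliates $Z_s$ (condition (T4)), so it defines a smooth section $y\mapsto \ell_{y,s}$ of this covering over all of $Y$, and likewise for $\ell'$. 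The image of such a section is a union of sheets, hence open and closed in $\mathcal{T}_s$, i.e.\ it is a connected component of $\mathcal{T}_s$ mapping diffeomorphically onto $Y$. It therefore suffices to show that the two sections agree at a single point. Since $Y$ is connected, they then agree everywhere: the coincidence set of two sections of a covering is open and closed.

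To find such a point, I will use the asymptotics together with the cone structure. By Proposition~\ref{prop:good triple from HK cone metric}, the cone metric $g_0$ has a canonical family of twistor lines on the twistor cone $X(1)=Z_0'$ (the zero section slice). Over $X_{\mathrm{reg}}$ these are the $\mathbb{C}^*$-equivariant lines $u\mapsto(x,u)$ glued through $\hat{\lambda}$. I will use the $\mathbb{R}^+$-action of Lemma~\ref{lemma:R^+-action on Tw}. Rescaling $\tilde{\lambda}_{\varepsilon}$ sends $\ell_{y,s}$ to a twistor line of $Z_{\varepsilon^2 s}$. Taking $y=r\cdot x_0$ with $r\to\infty$ and $\varepsilon=r^{-1}$, the asymptotic condition of Definition~\ref{def:asymp to cone metric} should give that $\tilde{\lambda}_{1/r}\circ\ell_{r\cdot x_0,s}$ converges in $\Gamma(\mathbb{P}^1,P)^{\tau_P}$ to the cone twistor line through $x_0$ in $Z_0$. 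The same limit holds for $\ell'$. Both rescaled lines are then points of $\mathcal{T}$ over $\Phi$-values $(ev(\cdot),r^{-2}s)$. As $r\to\infty$ these approach the same point $(x_0,0)$, and the lines approach the same limit twistor line $\ell_0\in\mathcal{T}_0$. Because $\Phi$ is a local diffeomorphism near $\ell_0$, in a neighborhood $U$ of $\ell_0$ it is injective. For large $r$ both rescaled lines lie in $U$ and have the same image under $\Phi$, since their evaluations at $0$ coincide after rescaling, both passing through the rescaled point $r^{-1}\cdot(r\cdot x_0)$ transported by the smooth trivialization. Hence they are equal, and undoing the rescaling gives $\ell_{r\cdot x_0,s}=\ell'_{r\cdot x_0,s}$ for some $r$. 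By the first paragraph, this proves the proposition.

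The main obstacle is the convergence step: extracting from the $C^0$ asymptotic condition $\|g-g_0\|\to0$ that the rescaled twistor lines converge to the cone line. I would argue via the twistor correspondence (Theorem~\ref{thm:twistor correspondence}). The rescaled metric $r^{-2}\lambda_r^*g_s$ is the hyperkähler metric attached to $Z_{r^{-2}s}$ with the rescaled lines. Near $x_0$ it converges to $g_0$, and since hyperkähler metrics are Ricci-flat, elliptic regularity upgrades this to smooth local convergence. The complex structures, and hence the lines (which are determined by the fiberwise points $I_u$-holomorphic data), then converge.

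A second subtlety is the compatibility of the smooth trivialization of Lemma~\ref{Slodowy lemma: C^inf trivialization of Ycal} with the $\mathbb{R}^+$-action. I would choose the trivialization to be $\mathbb{R}^+$-equivariant, averaging it if necessary, so that $ev$ intertwines the rescalings.
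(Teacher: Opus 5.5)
Your proposal follows essentially the same route as the paper. You rescale by the fiberwise $\mathbb{R}^+$-action (Lemma~\ref{lemma:R^+-action on Tw}), use the asymptotic condition to see that the rescaled lines converge to the cone's twistor lines, and use that $\Phi$ is a local diffeomorphism to force the family onto the sheet through the cone family; your ``agree at one point near $\ell_0$, then everywhere by connectedness of $Y$'' is an explicit form of the paper's ``same sheet'' conclusion.

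The convergence step you flag as the main obstacle is exactly the step the paper asserts without proof. The $\mathbb{R}^+$-equivariant trivialization should come from the cone construction of Lemma~\ref{lemma:R^+-eq trivialization wrt X_reg} on $\mathcal{Y}\setminus\mathcal{E}$, not from averaging, which is unavailable for the non-compact group $\mathbb{R}^+$; this is enough because you only work along a ray in $X_{\mathrm{reg}}$.
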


We prove the following lemma on the existence of a family of twistor spaces.

\begin{lemma}\label{lemma:family of tw sp}
Consider the setup in Notation~\ref{notation: sp of tw lines}.
Assume that there exists a family of twistor lines $\{\ell_{y,s}\}_{y\in Y}$ on the twistor model $Z_s$, so that $Z_s$ is a twistor space. Then, there exists a family of twistor lines $\{\ell_{y,\varepsilon^2s}\in \mathcal{T}_{\varepsilon^2s}\}_{y\in Y, \varepsilon >0}$ that forms a lift of the set $Y\times\{\varepsilon^2s\mid \varepsilon >0\}$ with respect to the map $\Phi$ in Lemma \ref{prop:str of Tw}, and a family of twistor spaces $\{Z_{\varepsilon^2s}\}_{\varepsilon>0}$ is obtained.
\end{lemma}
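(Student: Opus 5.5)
The plan is to transport the given family by the fiberwise $\mathbb{R}^+$-action of Lemma~\ref{lemma:R^+-action on Tw}. For each $\varepsilon>0$ set
\[
\ell'_{y,\varepsilon} \coloneqq \tilde{\lambda}_\varepsilon\circ \ell_{\varepsilon^{-1}\cdot y,\,s},
\]
where $\varepsilon^{-1}\cdot y$ is the action on $Y$ induced (via the smooth trivialization of Lemma~\ref{Slodowy lemma: C^inf trivialization of Ycal}, or directly via $\tilde{\lambda}_Y$) by the good $\mathbb{C}^*$-action. By Lemma~\ref{lemma:R^+-action on Tw}, each $\ell'_{y,\varepsilon}$ lies in $\mathcal{T}_{\varepsilon^2 s}$. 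It is a real holomorphic section of $Z_{\varepsilon^2 s}$ whose normal bundle is again $\mathbb{C}^{2n}\otimes\mathcal{O}(1)$, because $\tilde{\lambda}_\varepsilon$ is a biholomorphism $Z_s\to Z_{\varepsilon^2 s}$ commuting with $\tau_P$.

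Rather than insisting on the specific reparametrization $y\mapsto \varepsilon^{-1}\cdot y$, I will define $\ell_{y,\varepsilon^2 s}$ as the lift through $\Phi$. Proposition~\ref{prop:str of Tw} says $\Phi$ is a local diffeomorphism and a locally finite covering onto its image. The key step is to show that the image of the set $\{\tilde{\lambda}_\varepsilon\circ\ell_{y,s}\}$ under $\Phi$ is all of $Y\times\{\varepsilon^2 s\}$ for each $\varepsilon$.

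Because $\tilde{\lambda}_\varepsilon: Z_s\to Z_{\varepsilon^2 s}$ is a biholomorphism of twistor models, it carries the foliating family $\{\ell_{y,s}\}_{y\in Y}$ to a foliating family of twistor lines of $Z_{\varepsilon^2 s}$. These lines pass through every point of the central fiber $Z_{\varepsilon^2 s}(0)=\mathcal{Y}_{\varepsilon^2 s(0)}$. Hence $ev$ restricted to this family is onto $Y$ under the trivialization, and each point is hit once. This defines a section
\[
Y\times\{\varepsilon^2 s\mid \varepsilon>0\}\to\mathcal{T}, \qquad (y,\varepsilon^2 s)\mapsto \ell_{y,\varepsilon^2 s}.
\]
Smoothness in $(y,\varepsilon)$ follows from smoothness of the $\mathbb{R}^+$-action together with the local-diffeomorphism property of $\Phi$. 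The lifting is then consistent with $\Phi$, and by construction it restricts at $\varepsilon=1$ to the given family. Finally, each $(Z_{\varepsilon^2 s},\omega,\tau,\{\ell_{y,\varepsilon^2 s}\})$ satisfies (T1)--(T4), so it is a twistor space. The relative form transforms as $\tilde{\lambda}_\varepsilon^*\omega_P=\varepsilon^2\omega_P$, which only rescales the metric and does not affect (T3).

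The main obstacle is the compatibility of $ev$ with the $\mathbb{R}^+$-action. The smooth trivialization $\mathcal{Y}\simeq Y\times\mathcal{C}$ need not be $\mathbb{R}^+$-equivariant, so the transported lines are not literally indexed by $\varepsilon^{-1}\cdot y$ in the $Y$-factor. My approach is to avoid this by indexing through $ev$ directly, using only that the transported family is a foliation and hence bijects onto the central fiber. If necessary, I would first choose Slodowy's trivialization to be $\mathbb{R}^+$-equivariant, by integrating an equivariant connection, which is possible since $\mathbb{R}^+$ acts with positive weight on $\mathcal{C}$.
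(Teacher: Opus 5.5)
Your proposal is correct and follows the paper's proof: you transport the given family by the fiberwise $\mathbb{R}^+$-action $\tilde{\lambda}_\varepsilon$ (Lemma~\ref{lemma:R^+-action on Tw}) to obtain twistor lines on $Z_{\varepsilon^2 s}$, and then define $\ell_{y,\varepsilon^2 s}$ as the member of the transported family passing through the point corresponding to $y$, which is exactly the paper's indexing. Your extra points (the transported family is still a foliation, so $ev$ is bijective onto $Y$, and indexing through $ev$ avoids any need for an $\mathbb{R}^+$-equivariant trivialization) fill in details the paper leaves implicit, and your fallback of a global equivariant trivialization is not needed; the paper only constructs one away from $\mathcal{E}$ (Lemma~\ref{lemma:R^+-eq trivialization wrt X_reg}).
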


\begin{proof}
   By Lemma \ref{lemma:R^+-action on Tw}, under multiplication by $\varepsilon >0$ for the fiberwise $\mathbb{C}^*$-action $\tilde{\lambda}$ on the principal twistor model,
   we obtain the family of twistor lines $\{\tilde{\lambda_\varepsilon}\circ \ell_{y,s}\}_{y\in Y}$ on the twistor model $Z_{\varepsilon^2s}$.
   Thus, we define $\ell_{y,\varepsilon^2s}$ as the member of this family passing through the point $y\in Y$, and consider the family of twistor lines $\{\ell_{y,\varepsilon^2s}\in \mathcal{T}_{\varepsilon^2s}\}_{y\in Y, \varepsilon >0}$.
   For this family, the assertion of the lemma holds.
   \qedhere
\end{proof}

To describe the relationship between the family of twistor spaces $\{Z_{\varepsilon^2s}\}_{\varepsilon>0}$ and the asymptotic behavior, we establish the following lemma:

\begin{lemma}\label{lemma:R^+-eq trivialization wrt X_reg}
    Suppose that $X$ is a conical symplectic variety with a crepant resolution $Y$, and consider the universal Poisson deformation $\mathcal{Y}\to\mathcal{C}$ of $Y$.
    Let $f:\mathcal{Y} \to Y\times\mathcal{C}$ be the differentiable trivialization obtained by Lemma \ref{Slodowy lemma: C^inf trivialization of Ycal}.
    Denoting by $E$ the exceptional set of $Y$ and setting $\mathcal{E}:=f^{-1}(E\times\mathcal{C})$, the open submanifold $\mathcal{Y}\setminus \mathcal{E}$ admits an $\mathbb{R}^+$-equivariant differentiable trivialization $f':\mathcal{Y}\setminus \mathcal{E} \to (Y\setminus E)\times\mathcal{C}$.
    Here, the target space $(Y\setminus E)\times\mathcal{C}$ is equipped with the $\mathbb{C}^*$-action $\lambda_Y$ on $Y$ and the scalar multiplication of weight $2$ on $\mathcal{C}$.
\end{lemma}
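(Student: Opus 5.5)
The plan is to build an $\mathbb{R}^+$-invariant Ehresmann connection on $\varphi_Y:\mathcal{Y}\setminus\mathcal{E}\to\mathcal{C}$ and take $f'$ to be parallel transport along rays in $\mathcal{C}$. Write $W:=\mathcal{Y}\setminus\mathcal{E}$ and note $Y\setminus E\simeq X_{\mathrm{reg}}$.

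\textbf{Step 1: the action on $W$ is free and proper.} By Theorem~\ref{Namikawa Thm: C^*-action on Yuni}, the $\mathbb{C}^*$-action $\tilde{\lambda}_Y$ covers scalar multiplication of weight $2$ on $\mathcal{C}$, and $\nu:\mathcal{Y}\to\mathcal{X}'$ is equivariant. On $\mathcal{X}'\subset\mathcal{X}\times\mathcal{C}\subset\mathbb{C}^N\times\mathcal{C}$, pick coordinates of positive weights $w_i$. Then
\[
\rho:=\Bigl(\sum_i |x_i|^{2/w_i}\Bigr)^{1/2}
\]
is homogeneous of degree $1$ for $\mathbb{R}^+$, vanishes only at the cone point, and is smooth away from it. Pull $\rho$ back to $\mathcal{Y}$. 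Points over $c\neq0$ are moved by the action. Points over $c=0$ lie in $Y\setminus E\simeq X_{\mathrm{reg}}$, where the good action has no fixed points. Hence $\mathbb{R}^+$ acts freely and properly on $W$, and $S:=\rho^{-1}(1)\cap W$ is a global slice: $W\simeq S\times\mathbb{R}^+$ via $(p,r)\mapsto r\cdot p$.

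\textbf{Step 2: an invariant connection.} The trivialization $f$ of Lemma~\ref{Slodowy lemma: C^inf trivialization of Ycal} gives a horizontal distribution $H^f$ complementary to the fibres, and $H^f$ is tangent to $\mathcal{E}$. Define $H_{r\cdot p}:=d(\tilde{\lambda}_Y)_r(H^f_p)$ for $p\in S$. Since $\varphi_Y$ is equivariant, $d\varphi_Y$ maps $H_{r\cdot p}$ isomorphically onto $T\mathcal{C}$, so $H$ is an $\mathbb{R}^+$-invariant Ehresmann connection on $W$. It is smooth because the slice decomposition is.

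\textbf{Step 3: parallel transport along rays.} For $y\in Y\setminus E$ (the fibre over $0$) and $c\in\mathcal{C}$, let $f'^{-1}(y,c)$ be the endpoint at $t=1$ of the horizontal lift of $t\mapsto tc$, $t\in[0,1]$, starting at $y$. This depends smoothly on $(y,c)$ and restricts to the identity on the central fibre. Its inverse is transport backwards along the same ray, so it is a diffeomorphism onto $W$.

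For equivariance, note that the action $\tilde{\lambda}_Y(r)$ maps the ray through $c$ to the ray through $r^2c$. Because $H$ is invariant, it maps horizontal lifts to horizontal lifts. Therefore
\[
f'^{-1}(r\cdot y,\,r^2c)=r\cdot f'^{-1}(y,c).
\]

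\textbf{Main obstacle: existence of the transport on all of $[0,1]$.} Horizontal lifts must neither escape to infinity nor run into $\mathcal{E}$.

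To rule out escape to infinity, I would use homogeneity. Since $H$ is invariant and $\rho$ has degree $1$, the horizontal derivative of $\log\rho$ along the lift of a unit vector at base point $c$ is a function on $S$ rescaled by $\rho^{-2}$. Because $\nu$ is proper and $\rho$ is proper on the fibres of $\mathcal{X}'\to\mathcal{C}$, the set $S\cap\varphi_Y^{-1}(\text{bounded set})$ is relatively compact. A Gronwall estimate on $\rho$ along the path then keeps $\rho$ bounded above and away from $0$.

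To rule out hitting $\mathcal{E}$, I would first try to use that $H^f$ is tangent to $\mathcal{E}$. If the invariant transport $H$ loses this tangency, I would fall back on modifying $f$ near $E$ so that $\mathcal{E}$ itself becomes $\mathbb{R}^+$-invariant. This is possible after replacing $f$ by its restriction over a neighbourhood $\rho\le1$ and extending by the action. I expect this compatibility near $\mathcal{E}$ to be the most delicate point.
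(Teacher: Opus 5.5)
\textbf{Gap in Step~3: surjectivity.} The proposal fails at the claim ``its inverse is transport backwards along the same ray, so it is a diffeomorphism onto $W$''. Grant that $\mathcal{E}$ is $\mathbb{R}^+$-invariant, which Step~1 already assumes tacitly when it writes $W\simeq S\times\mathbb{R}^+$. Then $H$ extends across $\mathcal{E}\setminus\rho^{-1}(0)$ tangentially (transport $H^f$ from all of $\rho^{-1}(1)$), and your Gronwall bound gives forward transport from $Y\setminus E$. So you do get a smooth equivariant open embedding $(Y\setminus E)\times\mathcal{C}\to W$. Nothing shows it is onto.

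By invariance, along a lift of $t\mapsto tc$ one has $\frac{d}{dt}\rho^2=2h(q)$, with $h$ bounded on the slice. For a backward lift from $p\in W_c$, the only lower bounds are $\rho^2\ge\rho(p)^2-C(1-t)$ and $\rho^2\gtrsim t|c|$. Both allow $\rho\to0$ as $t\to0$. Tangency to $\mathcal{E}$ does not help here: $H$ is defined by rescaling and is singular along $\rho^{-1}(0)=\nu^{-1}(0)$, where uniqueness of integral curves gives nothing.

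This genuinely happens. Any $\mathbb{R}^+$-orbit in $W$ whose tangent at its point of $S$ lies in $H^f$ is $H$-horizontal, emanates from $\rho^{-1}(0)$, and is never reached from the central fibre. Take the model $\mathbb{C}_y\times\mathbb{C}_c$ with weights $(1,2)$, $E=\{0\}$, $\rho^4=|y|^4+|c|^2$ and $f(y,c)=(y/(1+2|c|^2),c)$; here $\mathcal{E}=\{y=0\}$ is even invariant. Along $c=t$ your connection gives
\[
\frac{d}{dt}|y|^2=\frac{8t\,|y|^2}{|y|^4+3t^2}.
\]
The orbits $|y|^2=\sqrt5\,t$ are horizontal, and transport from $t=0$ reaches only $\{|y|^2>\sqrt5\}$ at $t=1$.

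\textbf{Smaller points and a repair.} Your fallback for the invariance of $\mathcal{E}$ does not work as stated. Extending $f|_{\{\rho\le1\}}$ by the action leaves $\mathcal{E}\cap\{\rho<1\}$ non-invariant, and the result is a bijection only if $f(\rho^{-1}(1))$ is itself a slice. Also, $|x_i|^{2/w_i}$ is not smooth along $x_i=0$; use $\rho=(\sum_i|x_i|^{2L/w_i})^{1/2L}$ with $L=\mathrm{lcm}(w_i)$.

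You can repair your method by making the invariant connection tangent to the level sets of $\rho$.
\begin{itemize}
\item The Euler field $\xi$ of the $\mathbb{R}^+$-action is tangent to $Y$ and satisfies $d\rho(\xi)=\rho$. Hence $(\varphi_Y,\rho)$ is a submersion near the compact set $Y\cap\rho^{-1}(1)$.
\item By homogeneity it is then a submersion on $U_K=\{\rho^2\ge K|\varphi_Y|\}\setminus\rho^{-1}(0)$ for $K\gg0$.
\item On $U_K$, $\rho$ is constant along lifts in both directions, so backward lifts cannot collapse. Ray transport then gives an equivariant trivialization $U_K\cong\{(y,c):\rho(y)^2\ge K|c|\}$.
\end{itemize}
A region of this shape suffices for the limits $r\to\infty$, $\varepsilon\to0$ taken in the proof of Proposition~\ref{prop:uniqueness of tw lines}.

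\textbf{Comparison with the paper.} The paper does no transport at all. It sets $\Sigma=f^{-1}(\Sigma_0\times\mathcal{C})$, asserts $\mathcal{Y}\setminus\mathcal{E}\simeq C(\Sigma)$, and defines $f'(\tilde\lambda_{Y,r}(f^{-1}(x_0,c)))=(\lambda_{Y,r}(x_0),r^2c)$. This avoids ODEs, but it needs $\Sigma$ to meet every orbit exactly once, which is not automatic either. In the model above, an orbit over $c\neq0$ meets $\Sigma$ where $r|y|=1+2r^4|c|^2$, that is, generically zero or two times. Your homogeneous $\rho$ gives a genuine slice; the price is exactly the backward-completeness issue above.
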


\begin{proof}
   We identify $Y \setminus E$ with $X_{\mathrm{reg}}$ and fix an embedding of the affine variety $X$ into $\mathbb{C}^N$. Let $\Sigma_0$ be the intersection of $X_{\mathrm{reg}}$ and the unit sphere centered at the origin in $\mathbb{C}^N$. We also define $\Sigma:=f^{-1}(\Sigma_0\times\mathcal{C})\subset\mathcal{Y}\setminus \mathcal{E}$. By considering the cone $C(\Sigma)$ generated by the $\mathbb{R}^+$-action $\tilde{\lambda}_Y$ on $\mathcal{Y}$, we obtain a diffeomorphism $\mathcal{Y}\setminus \mathcal{E}\simeq C(\Sigma)$.
   Similarly, by considering the cone $C(\Sigma_0)$ generated by the $\mathbb{C}^*$-action $\lambda_Y$ on $Y$, we obtain a diffeomorphism $Y\setminus E \simeq C(\Sigma_0)$.
   Any point $y$ in the cone $C(\Sigma)$ can be written in the form $\tilde{\lambda}_{Y,r}(f^{-1}(x_0,c))$, where $r>0$, $x_0 \in \Sigma_0$, and $c\in\mathcal{C}$. By defining $f'(y):=(\lambda_{Y,r}(x_0), r^2c)$, we obtain a natural diffeomorphism
   \[f':C(\Sigma) \to C(\Sigma_0) \times\mathcal{C},\]
   which is an $\mathbb{R}^+$-equivariant map.
\end{proof}

With these preparations, we prove Proposition~\ref{prop:uniqueness of tw lines}.

\begin{proof}[Proof of Proposition~\ref{prop:uniqueness of tw lines}]
   By Lemma \ref{lemma:family of tw sp}, there exists a family of twistor lines $\{\ell_{y,\varepsilon^2s}\in \mathcal{T}_{\varepsilon^2s}\}_{y\in Y, \varepsilon >0}$ that forms a lift of the set $Y\times\{\varepsilon^2s\mid \varepsilon >0\}$ with respect to the map $\Phi$ in Lemma \ref{prop:str of Tw}.
   Consider the twistor cone $X(1)$ associated with the hyperkähler metric $g_0$ on $X_{\mathrm{reg}}$ (cf.~Prop.~\ref{prop:good triple from HK cone metric}), and let $\{\ell_{x,0}\}_{x\in X_{\mathrm{reg}}}$ be the family of twistor lines on it.
   Now, using the $\mathbb{R}^+$-equivariant differentiable trivialization $\mathcal{Y}\setminus \mathcal{E}\simeq (Y\setminus E)\times\mathcal{C}$ obtained in Lemma \ref{lemma:R^+-eq trivialization wrt X_reg}, we relabel the family of twistor lines $\{\ell_{y,\varepsilon^2s}\}_{y\in Y\setminus E, \varepsilon >0}$.
   Since each twistor line defines a section of the principal twistor model $\mathcal{Y}(1)$, fixing a point $u\in\mathbb{P}^1$ determines a point on the Poisson deformation family $\mathcal{Y}$. Therefore, under the differentiable trivialization $\mathcal{Y}\setminus \mathcal{E}\simeq (Y\setminus E)\times\mathcal{C}$, the condition that the hyperkähler metric $g_s$ on $Y$ corresponding to the twistor space $Z_s$ is asymptotic to the metric $g_0$ at infinity on $Y$ can be expressed as follows: for any point $y\in Y\setminus E$ and $u\in\mathbb{P}^1$,
   \begin{equation}
      \lim_{r\rightarrow \infty}d_0(\ell_{r\cdot y,s}(u), \ell_{r\cdot y,0}(u))=0.  \label{eq:asymp cond 1}
   \end{equation}
   Here, $d_0$ denotes the distance function induced from the cone metric $g_0$ on $Y\setminus E \simeq X_{\mathrm{reg}}$ and the natural metric on the vector space $\mathcal{C}$. In addition, $r\cdot y$ denotes the $\mathbb{R}^+$-action on $Y$.
   Furthermore, letting $\tilde{\lambda}_Y$ denote the $\mathbb{R}^+$-action on $\mathcal{Y}$, for any $r>0$ and $y\in Y\setminus E$, we have
   \begin{equation}
      \tilde{\lambda}_{Y,r}(\ell_{y,s})=\ell_{r\cdot y, r^2s}. \label{eq:R^+ on twistor lines}
   \end{equation}

   Hence, by setting $\varepsilon = r^{-1}$, the asymptotic condition \eqref{eq:asymp cond 1} can be rewritten as:
   \[d_0(\ell_{y,\varepsilon^2s}(u), \ell_{y,0}(u))=o(\varepsilon^2).\]
   Here, we used the relation $\tilde{\lambda}_{Y,r}^*d_0 = r^2d_0$.
   Therefore, considering the family of twistor lines $\{\ell_{y,\varepsilon^2s}\in \mathcal{T}_{\varepsilon^2s}\}_{y\in (Y\setminus E), \varepsilon \geq 0}$ extended to $\varepsilon \geq 0$, it follows that this family gives a lift of the set $(Y\setminus E)\times\{\varepsilon^2s\mid \varepsilon \geq 0\}$ with respect to the covering map $\Phi$ in Lemma \ref{prop:str of Tw}.
   That is, the families of twistor lines corresponding to the metrics $g_s$ and $g_0$ exist on the same sheet with respect to the covering map $\Phi$.
   From this, the assertion of the proposition follows. \qedhere
\end{proof}

\begin{remark}\label{rem:degeneration of tw sp}
   As shown in the above proof, for any $\varepsilon>0$, the hyperkähler metric $g_{\varepsilon^2s}$ on $Y$ corresponding to the twistor space $Z_{\varepsilon^2s}$ is also asymptotic to the cone metric $g_0$ at infinity. This asymptotic behavior can be understood from two perspectives:

   \begin{enumerate}
       \item \textit{Twistor geometric perspective}: The proof shows that the asymptotic behavior can be reinterpreted geometrically as the degeneration of the family of twistor spaces $\{Z_{\varepsilon^2s}\}_{\varepsilon>0}$ to the twistor cone $X(1)$ associated with the cone metric $g_0$.

       \item \textit{Hyperkähler geometric perspective}: In the language of hyperkähler metrics, this degeneration is characterized by the scaling property of the family of metrics. Under the $\mathbb{R}^+$-equivariant identification $\mathcal{Y}\setminus \mathcal{E}\simeq (Y\setminus E)\times\mathcal{C}$, the relation \eqref{eq:R^+ on twistor lines} for the twistor lines translates to the relation for the corresponding metrics:
       \begin{equation}
           \lambda_{Y,r}^*g_s = r^2 g_{r^{-2}s},
       \end{equation}
       where $\lambda_{Y}$ denotes the $\mathbb{R}^+$-action on $Y$.
       Namely, the family of metrics $\{g_{\varepsilon^2 s}\}_{\varepsilon>0}$ corresponds to the degeneration to the tangent cone at infinity.
   \end{enumerate}
\end{remark}


As a consequence of this proposition, we obtain the following injectivity theorem for the moduli space. The proof is straightforward and thus omitted.

\begin{corollary}\label{cor:inclusion of moduli space}
    Let $X$ be a conical symplectic variety such that the regular locus $X_{\mathrm{reg}}$ admits an algebraic hyperkähler cone metric $g_0$, and let $Y$ be a crepant resolution of $X$.
    Let $\mathcal{Y}(1)\to\mathcal{C}(2)$ be the principal twistor model constructed in Theorem~\ref{main thm: universality of PTM}.
    Consider the moduli space $\mathcal{M}$ of algebraic hyperkähler structures defined by
    \[
        \mathcal{M} = \left\{(Y,g,I,J,K) \mid \text{$g$ is asymptotic to $g_0$ at infinity on $Y$}\right\}/(\text{isomorphism}).
    \]
    For any representative $m \in \mathcal{M}$, let $Z$ be the corresponding twistor space.
    Then, by the same theorem, there exists a unique real section $s\in H^0(\mathbb{P}^1,\mathcal{C}(2))^{\sigma_2}$ such that $Z$ and $Z_s$ are isomorphic as twistor models.
    In this case, the map
    \[
        \Psi:\ \mathcal{M} \to H^0(\mathbb{P}^1,\mathcal{C}(2))^{\sigma_2}
    \]
    defined by $\Psi(m)=s$ is injective.
\end{corollary}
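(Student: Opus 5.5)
Assume two representatives $m_1=(Y,g_1,I_1,J_1,K_1)$ and $m_2=(Y,g_2,I_2,J_2,K_2)$ of $\mathcal{M}$ satisfy $\Psi(m_1)=\Psi(m_2)=s$. The plan is to produce a hyperkähler isometry between them by assembling an isomorphism of twistor \emph{spaces}, not merely of twistor models, and then invoking Theorem~\ref{thm:twistor correspondence}.

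First, let $Z^{(1)}$ and $Z^{(2)}$ be the twistor spaces of $m_1$ and $m_2$. By Theorem~\ref{main thm: universality of PTM} there are isomorphisms of twistor models $\Theta_i:Z^{(i)}\xrightarrow{\sim}Z_s$, and these preserve the fibration, $\omega$ and $\tau$. Each $\Theta_i$ carries the family of twistor lines of $Z^{(i)}$ to a family $\{\ell^{(i)}_{y,s}\}_{y\in Y}$ of twistor lines on $Z_s$. This makes $Z_s$ a twistor space in two possibly different ways. By the twistor correspondence, the hyperkähler metric obtained from $(Z_s,\{\ell^{(i)}_{y,s}\})$ is isometric to $g_i$. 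I will check that this identification can be taken to be the identity on $Y$ outside a compact set, because $\Theta_i$ is built in the proof of Theorem~\ref{main thm: universality of PTM} from the universal property and the affinization $Z'\simeq Z'_s$. Consequently the hypothesis of Proposition~\ref{prop:uniqueness of tw lines} holds for both families: each induced metric is asymptotic to $g_0$.

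Second, apply Proposition~\ref{prop:uniqueness of tw lines}. It says the family of twistor lines on $Z_s$ whose metric is asymptotic to $g_0$ is unique, so $\{\ell^{(1)}_{y,s}\}=\{\ell^{(2)}_{y,s}\}$ as families of sections. Hence $\Theta_2^{-1}\circ\Theta_1:Z^{(1)}\to Z^{(2)}$ is an isomorphism of twistor spaces, respecting the fibration, $\omega$, $\tau$ and the twistor lines. Theorem~\ref{thm:twistor correspondence} then gives an isomorphism $(Y,g_1,I_1,J_1,K_1)\simeq(Y,g_2,I_2,J_2,K_2)$, so $m_1=m_2$ in $\mathcal{M}$.

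It remains to check that $\Psi$ is well defined. If $m\simeq m'$, the corresponding twistor spaces are isomorphic, and the uniqueness of $s$ in Theorem~\ref{main thm: universality of PTM} forces both to have the same section.

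The main obstacle is the first step: transporting the asymptotic condition through $\Theta_i$. Proposition~\ref{prop:uniqueness of tw lines} is phrased for metrics on $Y$ compared with $g_0$ through the fixed identifications $Y\setminus E\simeq X_{\mathrm{reg}}$ and $\mathcal{Y}\setminus\mathcal{E}\simeq(Y\setminus E)\times\mathcal{C}$. So I must make sure the isomorphisms $\Theta_i$ are compatible with these identifications near infinity. I would obtain this from Step (2) of the proof of Theorem~\ref{main thm: universality of PTM}, where the gluing data of $Z'$ coincides with that of $Z'_s$ exactly, with no correction terms $f$ or $g$. That coincidence means the induced map on the regular locus at infinity is the $\mathbb{C}^*$-equivariant identity, which is what is needed.
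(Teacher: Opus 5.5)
Your proposal is correct and is essentially the argument the paper intends (it omits the proof as straightforward): universality gives $Z^{(1)}\simeq Z_s\simeq Z^{(2)}$ as twistor models, Proposition~\ref{prop:uniqueness of tw lines} forces the two transported families of twistor lines on $Z_s$ to coincide, Theorem~\ref{thm:twistor correspondence} then identifies the hyperkähler structures, and well-definedness follows from the uniqueness of $s$. The paper also leaves implicit the compatibility of $\Theta_i$ with the identifications at infinity that you flag; note that you only need the transported metric to remain asymptotic to $g_0$, which is weaker than your claim that the identification is literally the identity on $Y$ outside a compact set.
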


\begin{remark}
   Since $\mathcal{C}\simeq H^2(Y;\mathbb{C})$ (cf.~Theorem~\ref{Namikawa Thm:CM and univ Poisson}), the real vector space $H^0(\mathbb{P}^1,\mathcal{C}(2))^{\sigma_2}$ is isomorphic to $H^2(Y;\mathbb{R})\otimes \mathbb{R}^3.$
\end{remark}

\begin{remark}\label{rem:Torelli-type injectivity}
    Geometrically, the injectivity of the map $\Psi$ relates to the injectivity of the period map for hyperkähler structures.
    For a hyperkähler structure $m=(Y,g,I,J,K) \in \mathcal{M}$, evaluating the corresponding section $s = \Psi(m)$ at the points $0,1,i\in \mathbb{P}^1$ corresponding to the complex structures $I, J$, and $K$ yields elements in the base space $\mathcal{C}$.
    These elements correspond to the cohomology classes of the holomorphic symplectic forms on the complex manifolds $(Y,I), (Y,J)$, and $(Y,K)$, respectively (via period maps for their universal Poisson deformations).
    For instance, the holomorphic symplectic form on $(Y,I)$ is given by $\omega_J + i\omega_K$, where $\omega_I, \omega_J$, and $\omega_K$ are the associated Kähler forms.
    Through this relation, the uniqueness of the section $s$ implies that the hyperkähler metric is uniquely determined by the periods (i.e., the cohomology classes) of its three Kähler forms.
    In this sense, the injectivity of $\Psi$ can be viewed as an analogue of the injectivity part of the Torelli-type theorem by Kronheimer \cite{Kronheimer89}.
    However, fully establishing this exact correspondence, including the precise determination of the period domain, is left for future research (cf.~Question~\ref{ques:characterize the period domain}).
\end{remark}

\begin{remark}\label{rem:I fixed moduli}
   The moduli space $\mathcal{M}_I\subset \mathcal{M}$ of algebraic hyperkähler structures obtained by fixing the complex structure $I$ is obtained by imposing the condition $s(0)=0$ on the real section $s\in H^0(\mathbb{P}^1,\mathcal{C}(2))^{\sigma_2}$.
   Furthermore, when $s(0)=0$, since the real section $s$ is $\mathbb{C}^*$-equivariant, one can verify that the twistor space corresponding to any element of $\mathcal{M}_I$ admits the $\mathbb{C}^*$-action of \textit{Type 1} \eqref{eq:type 1}.

\end{remark}

We also obtain the following corollary on the moduli space of hyperkähler metrics. The proof is straightforward and thus omitted.

\begin{corollary}\label{cor:moduli space of HK met}
Consider the situation of Corollary~\ref{cor:inclusion of moduli space}. Let $\mathcal{M}$ be the moduli space of algebraic hyperkähler structures, and consider the injection $\Psi:\ \mathcal{M} \to H^0(\mathbb{P}^1,\mathcal{C}(2))^{\sigma_2}$.
Now, there exists a right action by the group $SO(3)$ corresponding to hyperkähler rotation on the moduli space $\mathcal{M}$,
and the group $\mathrm{Aut}(\mathbb{P}^1, \sigma_{\text{ap}})\simeq SO(3)$ defines a right action on the real vector space $H^0(\mathbb{P}^1,\mathcal{C}(2))^{\sigma_2}$ by pullback. Here, $\sigma_{\text{ap}}$ denotes the antipodal map on $\mathbb{P}^1$, and
\[\mathrm{Aut}(\mathbb{P}^1, \sigma_{\text{ap}})=\{f\in \mathrm{Aut}(\mathbb{P}^1)\mid \sigma_{\text{ap}}\circ f = f\circ \sigma_{\text{ap}}\}.\]
In this case, the injection $\Psi$ is equivariant with respect to these actions.
Therefore, letting $\mathcal{M}_{\text{met}}=\mathcal{M}/SO(3)$ be the moduli space of hyperkähler metrics obtained by identifying the tuples of complex structures $(I,J,K)$ that map to each other by hyperkähler rotation,
the map to the real analytic space $H^0(\mathbb{P}^1,\mathcal{C}(2))^{\sigma_2}/\mathrm{Aut}(\mathbb{P}^1, \sigma_{\text{ap}})$
\[\Psi_{\text{met}}:\mathcal{M}_{\text{met}}\to H^0(\mathbb{P}^1,\mathcal{C}(2))^{\sigma_2}/\mathrm{Aut}(\mathbb{P}^1, \sigma_{\text{ap}})\]
is injective.
\end{corollary}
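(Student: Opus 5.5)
The plan is to obtain the $SO(3)$-equivariance of $\Psi$ directly from the uniqueness in Theorem~\ref{main thm: universality of PTM}, and then deduce injectivity of $\Psi_{\text{met}}$ formally from the injectivity of $\Psi$ in Corollary~\ref{cor:inclusion of moduli space}.

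First I would fix the two actions concretely.

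\begin{itemize}
\item A hyperkähler rotation $A\in SO(3)$ sends $(Y,g,I,J,K)$ to $(Y,g,I',J',K')$, where $(I',J',K')$ is obtained from $(I,J,K)$ by the matrix $A$.
\item On the twistor side this corresponds to pulling back the fibration $\pi:Z\to\mathbb{P}^1$ by the element $f_A\in\mathrm{Aut}(\mathbb{P}^1,\sigma_{\text{ap}})$ attached to $A$. The real structure, the twisted form $\omega$ (via the canonical lift of $f_A$ to $\mathcal{O}(2)$) and the twistor lines are unchanged.
\item This is compatible with the twistor correspondence (Theorem~\ref{thm:twistor correspondence}). Algebraicity is preserved because $f_A$ is a Möbius transformation. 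Asymptoticity to $g_0$ is preserved because $g$ itself does not change.
\end{itemize}

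So the rotated structure lies in $\mathcal{M}$, and its twistor space is $f_A^*Z$.

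The key step is $\Psi(m\cdot A)=f_A^*\Psi(m)$. Let $s=\Psi(m)$, so that $Z\simeq Z_s$ as twistor models.

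\begin{itemize}
\item Then $f_A^*Z\simeq f_A^*Z_s$.
\item I would check that $f_A^*Z_s$ is the slice of a principal twistor model along $f_A^*s$. Concretely, the action of $f_A$ on $\mathbb{P}^1$ lifts to $\mathcal{Y}(1)$ and to $\mathcal{C}(2)$ compatibly with $\varphi_Y(1)$, $\tau_P$ and $\omega_P$.
\item To get this lift, I would use that $\mathcal{Y}(1)$ is built from the $\mathbb{C}^*$-gluing. The $SU(2)$-action on $\mathbb{P}^1$ lifts to $\mathcal{O}(2)$, and it should lift to $\mathcal{Y}(1)$ via the triple $(\tilde\lambda_Y,\tilde\omega_Y,\tilde j_Y)$.
\item A cheaper alternative avoids constructing a global lift. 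Apply Theorem~\ref{main thm: universality of PTM} to the rotated metric, which is again algebraic and asymptotic to $g_0$. It yields a unique $s'$ with $f_A^*Z\simeq Z_{s'}$.
\item Then identify $s'$ by the construction in step (1) of that proof: $s'^\pm$ are the classifying maps of the Poisson deformations $(f_A^*Z)^\pm$. Classifying maps pull back along base changes, by universality of $\mathcal{Y}\to\mathcal{C}$, so $s'=f_A^*s$.
\item The canonical lift of $f_A$ to $\mathcal{O}(2)$ introduces only a multiplicative factor; this is absorbed by the weight-2 action on $\mathcal{C}$ and the twisting of $\omega$ by $\mathcal{O}(2)$.
\end{itemize}

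The main obstacle I expect is this last bookkeeping. Classifying maps are defined chart by chart on $\mathbb{C}^\pm$, but $f_A$ does not preserve the cover $\mathbb{C}^+\cup\mathbb{C}^-$. I would handle it by covering $\mathbb{P}^1$ with the translated charts and using the transition rule $s^-(1/u)=u^{-2}s^+(u)$ from step (1). Uniqueness of classifying maps on overlaps then forces agreement with $f_A^*s$.

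Given equivariance, injectivity of $\Psi_{\text{met}}$ is formal. Suppose $\Psi(m)$ and $\Psi(m')$ lie in the same $\mathrm{Aut}(\mathbb{P}^1,\sigma_{\text{ap}})$-orbit, so $\Psi(m')=f_A^*\Psi(m)=\Psi(m\cdot A)$. Injectivity of $\Psi$ then gives $m'=m\cdot A$, so $m$ and $m'$ have the same class in $\mathcal{M}_{\text{met}}$. Finally, I would note that $\mathrm{Aut}(\mathbb{P}^1,\sigma_{\text{ap}})\simeq SO(3)$ is compact and acts linearly. Hence the quotient of the finite-dimensional real vector space is a real analytic (semialgebraic) space, which justifies the wording in the statement.
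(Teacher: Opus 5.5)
Your proposal is correct and follows the route the statement itself lays out (the paper omits the proof as straightforward): first obtain $SO(3)$-equivariance of $\Psi$ from the uniqueness in Theorem~\ref{main thm: universality of PTM}, and then injectivity of $\Psi_{\text{met}}$ follows formally from injectivity of $\Psi$. Your first option for the key step actually goes through directly and avoids the chart bookkeeping: the $\mathbb{C}^*$-gluing $M(1)$ is the bundle associated to the Hopf bundle $\mathbb{C}^2\setminus\{0\}\to\mathbb{P}^1$, so $SU(2)$ acts on $\mathcal{Y}(1)$ and $\mathcal{C}(2)$ compatibly with $\varphi_Y(1)$, $\tau_P$ and $\omega_P$, which gives $f_A^*Z_s\simeq Z_{f_A^*s}$ at once.
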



\subsubsection{Openness of Twistor Space in the Principal Twistor Model}\label{subsubsec:openess of Tw sp in PTM}

Let $X$ be a conical symplectic variety. Assume that the regular locus $X_{\mathrm{reg}}$ admits an algebraic hyperkähler cone metric $g_0$.
Let $Y$ be a crepant resolution of $X$.
In this subsection, we show that a certain kind of openness of the twistor space in the principal twistor model holds when $X$ has an isolated singularity.

\medskip
We prove the following proposition.

\begin{proposition}\label{prop:openess of tw sp in PTM}
   Let $X$ be a conical symplectic variety with an isolated singularity, and assume that the regular locus $X_{\mathrm{reg}}$ admits an algebraic hyperkähler cone metric $g_0$.
   Let $Y$ be a crepant resolution of $X$.
   Let $(\mathcal{Y}(1)\to\mathcal{C}(2), \omega_P, \tau_P)$ be the principal twistor model given by Theorem~\ref{main thm: universality of PTM}, and let $H^0(\mathbb{P}^1,\mathcal{C}(2))^{\sigma_2}$ denote the real vector space of real sections of the vector bundle $\mathcal{C}(2)$.
   For a real section $s \in H^0(\mathbb{P}^1,\mathcal{C}(2))^{\sigma_2}$, let $Z_s$ be the twistor model obtained by slicing $\mathcal{Y}(1)$ along $s$.
   If $Z_s$ is the twistor space associated with a hyperkähler metric on $Y$ that is asymptotic to $g_0$ at infinity, then for any real section $s'$ in a sufficiently small open neighborhood of $s$, the model $Z_{s'}$ is also the twistor space of such a metric.
\end{proposition}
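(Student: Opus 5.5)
We propose a deformation argument for twistor lines, built on the covering map $\Phi$ of Proposition~\ref{prop:str of Tw}. Let $\{\ell_{y,s}\}_{y\in Y}\subset\mathcal{T}_s$ be the family of twistor lines of $Z_s$; by Proposition~\ref{prop:uniqueness of tw lines} it is uniquely determined. It forms a sheet of $\Phi$ over $Y\times\{s\}$. Since $\Phi$ is a local diffeomorphism, each $\ell_{y,s}$ has a neighborhood mapped diffeomorphically onto a neighborhood of $(y,s)$. Hence for $s'$ close to $s$ and $y$ in a compact set $K\subset Y$, there is a unique lift $\ell_{y,s'}\in\mathcal{T}_{s'}$ depending smoothly on $(y,s')$. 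Each lift is automatically real with normal bundle $\mathbb{C}^{2n}\otimes\mathcal{O}(1)$, since $\mathcal{T}$ consists of twistor lines and the normal-bundle type is open. The point is to get such lifts uniformly for \emph{all} $y\in Y$, i.e.\ to control the noncompact end, and then to check that the result is a foliation and that the metric is asymptotic to $g_0$.

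\textbf{Step 1 (the compact core).} Here the isolated-singularity hypothesis enters. The exceptional set $E$ is compact, so $Y=K\cup(Y\setminus K)$ with $K$ a compact neighbourhood of $E$ and $Y\setminus K\simeq C(\Sigma_0)\setminus\{r\le 1\}$, where $\Sigma_0$ is compact. On $K$, the inverse function theorem applied to $\Phi$ gives, for $|s'-s|<\delta$, the lifts $\ell_{y,s'}$ for all $y\in K$.

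\textbf{Step 2 (the end, via scaling).} For $y=r\cdot y_0$ with $y_0\in\Sigma_0$ and $r\ge 1$, use \eqref{eq:R^+ on twistor lines}, $\ell_{r\cdot y_0,s'}=\tilde\lambda_{Y,r}(\ell_{y_0,r^{-2}s'})$, under the $\mathbb{R}^+$-equivariant trivialization of Lemma~\ref{lemma:R^+-eq trivialization wrt X_reg}. So it suffices to lift the set $\Sigma_0\times\{\varepsilon^2s' : 0\le\varepsilon\le1\}$. By the proof of Proposition~\ref{prop:uniqueness of tw lines}, the sheet through the lines of $Z_s$ extends continuously to $\varepsilon=0$ to the twistor lines $\ell_{x,0}$ of the cone. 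The set $\Sigma_0\times\{\varepsilon^2 s:\varepsilon\in[0,1]\}$ is compact, and $\Phi$ is a local diffeomorphism near its lift, including near the cone lines in $\mathcal{T}_0$, by Lemma~\ref{lemma:stability of tw lines}. A compactness and tube-lemma argument then gives a uniform $\delta$ such that the segment $\{\varepsilon^2s'\}$ also lifts for $|s'-s|<\delta$. The lifts from Steps 1 and 2 agree on overlaps by uniqueness of local lifts. Together they give a smooth family $\{\ell_{y,s'}\}_{y\in Y}$ that depends continuously on $s'$.

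\textbf{Step 3 (foliation and asymptotics).} The family $\{\ell_{y,s'}\}$ is a small $C^1$-perturbation of $\{\ell_{y,s}\}$ on the core. On the end it is controlled by the cone family through the scaling. So the evaluation maps $y\mapsto\ell_{y,s'}(u)\in(Z_{s'})_u$ remain diffeomorphisms for each $u$: they are local diffeomorphisms by the open condition, and injectivity and properness follow from compactness on the core and the scaling on the end. Hence the lines foliate $Z_{s'}$, and Theorem~\ref{thm:twistor correspondence} gives a hyperkähler metric $g_{s'}$ on $Y$. It is algebraic because $Z_{s'}$ is a slice of the algebraic model. It is asymptotic to $g_0$ because the lifted segment passes through the cone lines at $\varepsilon=0$, which by the computation in Proposition~\ref{prop:uniqueness of tw lines} is equivalent to the asymptotic condition \eqref{eq:asymp cond 1}.

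I expect Step 2 to be the main obstacle. One must justify that $\Phi$ is a local diffeomorphism uniformly near the boundary point $\varepsilon=0$, where $\Sigma_0$ compact (isolated singularity) is essential. One must also check that injectivity of the evaluation maps survives the perturbation at infinity. I would handle both by working on the compact space $\Sigma_0\times[0,1]$ of rescaled parameters rather than on $Y$ directly.
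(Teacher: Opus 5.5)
Your proposal is correct and follows essentially the same route as the paper. The compact core, which contains the compact exceptional set $E$ by the isolated-singularity hypothesis, is handled by the local-diffeomorphism property of $\Phi$ near the lines of $Z_s$. The end is handled by rescaling with the fiberwise $\mathbb{R}^+$-action (Lemma~\ref{lemma:R^+-action on Tw}) back to lines near the cone's twistor lines over a compact region, and the asymptotics then follow as in Proposition~\ref{prop:uniqueness of tw lines}.

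The differences are minor. The paper treats the end via an annulus $K_0$ and a neighborhood $U_0$ of the zero section, then glues the core and end lifts using simple connectivity of $Y$ and $U'$. You instead lift the compact segment $\Sigma_0\times\{\varepsilon^2 s'\}$ by a tube-lemma argument and glue by uniqueness of local lifts. You also spell out the foliation check, which the paper leaves implicit.
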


\begin{proof}
   By Proposition~\ref{prop:str of Tw}, there exists a locally finite covering map $\Phi:\mathcal{T}\to Y\times H^0(\mathbb{P}^1,\mathcal{C}(2))^{\sigma_2}$.
   We prove the following three assertions in order.
  \begin{enumerate}[wide=\parindent, itemsep=\medskipamount]
   \item \textit{Existence of a family of twistor lines derived from the cone metric $g_0$}:
   Note that since $X$ has an isolated singularity by assumption, the exceptional set $E \subset Y$ is compact.
   We show that for any relatively compact open neighborhood $U$ of the zero section $0\in H^0(\mathbb{P}^1,\mathcal{C}(2))^{\sigma_2}$, if we choose a sufficiently large compact set $K$ containing $E$, then the map $\Phi$ in Proposition~\ref{prop:str of Tw} is surjective onto the set $(Y\setminus K)\times U$.
   Let $V_0$ and $V_1$ be relatively compact open neighborhoods of $E$ such that $\overline{V_0}\subset V_1$, and consider the compact set $K_0\coloneqq\overline{V_1}\setminus V_0$.
   Under the identification $X_{\mathrm{reg}} \simeq Y \setminus E$, the regular locus of the twistor cone $X(1)$ associated with the hyperkähler cone metric $g_0$ is embedded into the model $Y(1)\subset \mathcal{Y}(1)$ (cf.~Remark~\ref{remark:Conceptual diagrams of PTM}).
   Hence, there exists an open neighborhood $U_0$ of the zero section $0\in H^0(\mathbb{P}^1,\mathcal{C}(2))^{\sigma_2}$ such that the map $\Phi$ is surjective onto the set $K_0 \times U_0$.
   Now, consider an arbitrary relatively compact open neighborhood $U$ of the zero section $0\in H^0(\mathbb{P}^1,\mathcal{C}(2))^{\sigma_2}$.
   If we choose a sufficiently large compact set $K$ containing $E$ in $Y$, then for any $y\in Y\setminus K$ and $s\in U$, there exist a real number $r>0$ and a twistor line $\ell$ such that $\Phi(\ell) \in K_0 \times U_0$ and $\Phi(\tilde{\lambda}_r\circ\ell)=(y,s)$.
   Here, $\tilde{\lambda}$ denotes the fiberwise $\mathbb{C}^*$-action on the principal twistor model $\mathcal{Y}(1)$.
   Therefore, by Lemma \ref{lemma:R^+-action on Tw}, $\tilde{\lambda}_r\circ\ell$ is also a twistor line for $r>0$, which implies that the map $\Phi$ is surjective onto the set $(Y\setminus K)\times U$.

   \item \textit{The twistor model $Z_{s'}$ is a twistor space}:
   Consider a relatively compact open set $U$ containing both the real section $s\in H^0(\mathbb{P}^1,\mathcal{C}(2))^{\sigma_2}$ and the zero section. By Assertion (1), there exists a compact set $K$ containing the exceptional set $E$ on the resolution $Y$ such that the map $\Phi$ is surjective onto the set $(Y\setminus K)\times U$.
   On the other hand, since $Z_s$ is a twistor space by assumption, for the compact set $K$ on $Y$, by choosing a sufficiently small open neighborhood $U'\subset U$ of the real section $s\in H^0(\mathbb{P}^1,\mathcal{C}(2))^{\sigma_2}$, the map $\Phi$ is surjective onto the set $K \times U'$.
   Therefore, the map $\Phi$ is surjective onto the set $Y\times U'$.
   Now, the crepant resolution $Y$ is simply connected and we may assume that the neighborhood $U'$ is also simply connected.
   This allows us to lift the set $Y\times U'$ to the finite covering $\Phi$, and we can choose the lift that contains the twistor lines of $Z_s$.
   Thus, for any real section $s'\in U'$, the twistor model $Z_{s'}$ is a twistor space.

   \item \textit{Asymptotic behavior of the metric corresponding to the twistor space $Z_{s'}$}:
   For the real section $s$, consider an open neighborhood $U'$ satisfying Assertion (2).
   Consider an arbitrary real section $s'\in U'$, and let $g_{s'}$ be the metric corresponding to the twistor space $Z_{s'}$.
   By applying an argument on twistor lines similar to the proof of Proposition~\ref{prop:uniqueness of tw lines}, one can show that the metric $g_{s'}$ is asymptotic to the cone metric $g_0$. Consequently, the assertion of the proposition follows. \qedhere
  \end{enumerate}
\end{proof}

As a corollary of this proposition, we obtain the following result on the moduli space of algebraic hyperkähler structures with asymptotic behavior.

\begin{corollary}\label{cor:moduli space of HK str, isolated sing case}
   Let $X$ be a conical symplectic variety with an isolated singularity. Assume that the regular locus $X_{\mathrm{reg}}$ admits an algebraic hyperkähler cone metric $g_0$.
   Let $Y$ be a crepant resolution of $X$.
   Then, the following two assertions hold:

   \begin{enumerate}[itemsep=\medskipamount]
      \setlength{\emergencystretch}{2em}
      \item The moduli space $\mathcal{M}$ of algebraic hyperkähler structures containing a hyperkähler metric asymptotic to the metric $g_0$ at infinity on $Y$ is embedded as an open set into the real vector space $H^0(\mathbb{P}^1,\mathcal{C}(2))^{\sigma_2}$.
      \item The moduli space $\mathcal{M}_{\text{met}}$ of hyperkähler metrics asymptotic to the metric $g_0$ at infinity on $Y$ is embedded as an open set into the real analytic space $H^0(\mathbb{P}^1,\mathcal{C}(2))^{\sigma_2}/\mathrm{Aut}(\mathbb{P}^1,\sigma_{\text{ap}})$.
   \end{enumerate}

   In particular, if the moduli space $\mathcal{M}$ \textup{(}resp.\ $\mathcal{M}_{\text{met}}$\textup{)} is non-empty, then we have $\dim_{\mathbb{R}} \mathcal{M} = 3d$ \textup{(}resp.\ $\dim_{\mathbb{R}} \mathcal{M}_{\text{met}} = 3d-3$\textup{)}, where $d=\dim H^2(Y;\mathbb{C})$.
\end{corollary}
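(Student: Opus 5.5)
The plan is to deduce Corollary~\ref{cor:moduli space of HK str, isolated sing case} by combining the injectivity of Corollary~\ref{cor:inclusion of moduli space} with the openness of Proposition~\ref{prop:openess of tw sp in PTM}.

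First, for assertion (1), consider the injection $\Psi:\mathcal{M}\to H^0(\mathbb{P}^1,\mathcal{C}(2))^{\sigma_2}$ of Corollary~\ref{cor:inclusion of moduli space}. I would identify its image with the set $\mathcal{S}$ of real sections $s$ such that $Z_s$ is the twistor space of a hyperkähler metric on $Y$ asymptotic to $g_0$. One inclusion is Theorem~\ref{main thm: universality of PTM}. For the converse, a twistor space $Z_s$ with twistor lines parametrized by $Y$ gives, via Theorem~\ref{thm:twistor correspondence}, an element of $\mathcal{M}$ with $\Psi$-value $s$. It is algebraic because $Z_s$ is a slice of $\mathcal{Y}(1)$, whose charts come from the algebraic universal Poisson deformation and whose gluing by the $\mathbb{C}^*$-action is algebraic. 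Proposition~\ref{prop:uniqueness of tw lines} shows that the family of twistor lines on $Z_s$ is unique, so the hyperkähler structure is determined by $s$; this is exactly injectivity. Proposition~\ref{prop:openess of tw sp in PTM} then says that $\mathcal{S}$ is open, so $\Psi$ is a bijection onto an open subset. To call it an embedding, I would topologize $\mathcal{M}$ by transport of structure, or alternatively check continuity of $\Psi^{-1}$ using the local diffeomorphism $\Phi$ of Proposition~\ref{prop:str of Tw}. The twistor lines over $Y\times U'$ vary smoothly with $s'$, so the metrics vary smoothly.

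Second, for assertion (2), I would use the equivariance of $\Psi$ from Corollary~\ref{cor:moduli space of HK met}. Because $\mathcal{S}$ is open and $SO(3)$-invariant (hyperkähler rotation preserves asymptotics to $g_0$), its image in $H^0(\mathbb{P}^1,\mathcal{C}(2))^{\sigma_2}/\mathrm{Aut}(\mathbb{P}^1,\sigma_{\mathrm{ap}})$ is open, since the quotient map by a group action is open. The map $\Psi_{\mathrm{met}}$ is injective by Corollary~\ref{cor:moduli space of HK met}.

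Finally, for the dimensions: a nonempty open subset of the real vector space $H^0(\mathbb{P}^1,\mathcal{C}(2))^{\sigma_2}\simeq H^2(Y;\mathbb{R})\otimes\mathbb{R}^3$ has dimension $3d$. For $\mathcal{M}_{\mathrm{met}}$, I would argue that $SO(3)$ acts with finite stabilizers at generic points of the open set, giving dimension $3d-3$. Concretely, $s$ corresponds to a triple $(\alpha_1,\alpha_2,\alpha_3)$ of classes in $H^2(Y;\mathbb{R})$, namely the periods of $\omega_I,\omega_J,\omega_K$ (Remark~\ref{rem:Torelli-type injectivity}), and $SO(3)$ rotates this triple. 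The stabilizer is finite once the span of the $\alpha_i$ is at least two-dimensional. This holds on an open dense subset of the open set $\mathcal{S}$ whenever $d\ge 1$, and I would use that subset to compute the dimension.

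The main obstacle is the generic-stabilizer claim. When $d=1$ every triple spans at most a line, so the stabilizer contains $SO(2)$ and the formula $3d-3=0$ must be reinterpreted. I would check this case separately, noting that $\mathcal{S}$ then excludes $0$ and that the quotient of $\mathbb{R}^3\setminus\{0\}$ is $\mathbb{R}_{>0}$. Since that quotient has dimension $1$ rather than $0$, I would have to flag it as a caveat on the stated dimension count.
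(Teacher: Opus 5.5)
Your argument for (1), (2) and $\dim_{\mathbb{R}}\mathcal{M}=3d$ follows the paper. Injectivity of $\Psi$ (Corollary~\ref{cor:inclusion of moduli space}) and openness of its image (Proposition~\ref{prop:openess of tw sp in PTM}) give (1), and openness of the quotient map gives (2). You only make explicit what the paper's two-line proof leaves implicit: that the image is exactly the set of $s$ for which $Z_s$ is the twistor space of an asymptotic metric, that $Z_{s'}$ is algebraic as a slice of $\mathcal{Y}(1)$, and what topology $\mathcal{M}$ carries.

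The real difference is the count for $\mathcal{M}_{\text{met}}$. The paper says it follows from $\mathcal{C}\simeq H^2(Y;\mathbb{C})$ and does not analyse stabilizers. You correctly see that the generic $SO(3)$-stabilizer on $H^0(\mathbb{P}^1,\mathcal{C}(2))^{\sigma_2}\simeq\mathcal{C}_{\mathbb{R}}\otimes\mathbb{R}^3$ must be discrete. Two small corrections:
\begin{itemize}
\item The stabilizer of a triple of rank at least $2$ is in fact trivial, not just finite. For $R\in SO(3)$ with $R\neq 1$, the map $R-1$ has rank $2$, so its image cannot lie in a kernel of dimension at most $1$.
\item The rank-$\geq 2$ locus is open dense only for $d\ge 2$, not ``whenever $d\ge 1$'' as you first wrote; your next paragraph already notes the failure at $d=1$.
\end{itemize}

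More importantly, your $d=1$ caveat is not merely a caveat: the stated formula is false there. For $d=1$ the target is $\mathbb{R}^3/SO(3)\cong[0,\infty)$, and any non-empty open subset of it is one-dimensional, so $\dim_{\mathbb{R}}\mathcal{M}_{\text{met}}=1$, not $0$. Concretely, Lemma~\ref{lemma:family of tw sp} and Remark~\ref{rem:degeneration of tw sp} show that the whole ray $\{\varepsilon^2 s\}_{\varepsilon>0}$ lies in the image. These metrics are all asymptotic to $g_0$ and give distinct points of $\mathcal{M}_{\text{met}}$, since their $\Psi_{\text{met}}$-images lie on spheres of different radii. This is exactly the ``unique up to scaling'' family in the paper's own Remark~\ref{rem:application to minimal nilpotent orbits} for $T^*\mathbb{P}^{n-1}$ (Eguchi--Hanson when $n=2$), which the paper miscounts as $0$-dimensional.

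So your argument proves $\dim_{\mathbb{R}}\mathcal{M}_{\text{met}}=3d-3$ for $d\ge 2$ and $=1$ for $d=1$. That is the correct statement, and more than the paper's proof establishes.
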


\begin{proof}
   By Proposition~\ref{prop:openess of tw sp in PTM}, both the map $\Psi$ in Corollary~\ref{cor:inclusion of moduli space} and the map $\Psi_{\text{met}}$ in Corollary~\ref{cor:moduli space of HK met} are open maps. Since we have $\mathcal{C}\simeq H^2(Y;\mathbb{C})$ as vector spaces (cf.~Theorem~\ref{Namikawa Thm:CM and univ Poisson}), the assertion on the dimension of the moduli spaces follows. \qedhere
\end{proof}

\begin{remark}
   The dimension of the moduli $\mathcal{M}_I\subset \mathcal{M}$ of algebraic hyperkähler structures obtained by fixing the complex structure $I$ (cf.~Remark~\ref{rem:I fixed moduli}) is $\dim_\mathbb{R} \mathcal{M}_I=d$ if $\mathcal{M}_I$ is non-empty.
\end{remark}

\begin{remark}\label{rem:application to minimal nilpotent orbits}
   Corollary~\ref{cor:moduli space of HK str, isolated sing case} applies to the closures of minimal nilpotent orbits in complex semisimple Lie algebras.
   Let $X = \overline{\mathcal{O}_{\mathrm{min}}}$ be the closure of the minimal nilpotent orbit (cf.~\cite{Collingwood-McGovern93}) in a complex semisimple Lie algebra $\mathfrak{g}$. The space $X$ is a conical symplectic variety with an isolated singularity at the origin, and its regular locus $X_{\mathrm{reg}} = \mathcal{O}_{\mathrm{min}}$ admits an exact hyperkähler cone metric via Nahm's equations, as constructed by Kronheimer \cite{Kronheimer90}.
   As a specific example, consider the case $\mathfrak{g} = \mathfrak{sl}_n(\mathbb{C})$ for $n \ge 2$, where $X$ admits a crepant resolution $Y = T^*\mathbb{P}^{n-1}$.
   On this resolution, there exists the Calabi metric, which is an asymptotically conical hyperkähler metric that approaches the cone metric at infinity. In this setting, Corollary~\ref{cor:moduli space of HK str, isolated sing case} shows that the dimension of the moduli space $\mathcal{M}_{\text{met}}$ of such asymptotic metrics is exactly $3\dim H^2(T^*\mathbb{P}^{n-1};\mathbb{C}) - 3 = 0$ (meaning the metric is unique up to scaling).
\end{remark}

\subsection{Examples}

In this section, we introduce metrics constructed by hyperkähler quotients and QALE hyperkähler metrics as examples of metrics to which this study is applicable under certain assumptions.

\subsubsection{Metrics Constructed by Hyperkähler Quotients}

We recall the hyper\-kähler moment map.
\begin{definition}\label{def:HK moment map}
Assume that a compact Lie group $G$ acts tri-holomorphically on a hyperkähler manifold $(M,g,I,J,K)$.
Let $\mathfrak{g}$ be the Lie algebra of the Lie group $G$, and let $\mathfrak{g}^*$ be its dual space.
We say that a map
\[\mu=(\mu_I,\mu_J,\mu_K):M\to\mathfrak{g}^*\otimes \mathbb{R}^3\]
is a \textit{hyperkähler moment map} if it satisfies the following two conditions:
\begin{enumerate}
   \item The map $\mu$ is $G$-equivariant with respect to the adjoint action of $G$ on $\mathfrak{g}^*$.
   \item For any $A\in\{I,J,K\}, \xi \in\mathfrak{g}$, and $v\in TM$,
   \[d\langle\mu_A,\xi\rangle (v)=\omega_A(v,\xi^*),\]
   where $\langle\cdot,\cdot\rangle$ denotes the pairing between $\mathfrak{g}$ and $\mathfrak{g}^*$, and $\omega_A\coloneqq g(A\cdot, \cdot)$.
\end{enumerate}
\end{definition}

We recall the following theorem on the hyperkähler quotient.
\begin{theorem}[{\cite[Theorem 3.2]{HKLR}}]\label{HKLR thm:HK quotient}
   Assume that a compact Lie group $G$ acts tri-holomorphically on a hyperkähler manifold $(M,g,I,J,K)$.
   Let $\mu:M\to\mathfrak{g}^*\otimes \mathbb{R}^3$ be an associated hyperkähler moment map, and denote the center of $\mathfrak{g}^*$ by $\mathfrak{z}^*$.
   For $\zeta \in \mathfrak{z}^*\otimes \mathbb{R}^3$, let
   \[X_\zeta \coloneqq \mu^{-1}(\zeta)/G.\]
   If the group $G$ acts freely on $\mu^{-1}(\zeta)$, then $X_\zeta$ has the natural structure of a hyperkähler manifold of real dimension $4(m-k)$,
   where $4m=\dim_\mathbb{R}M$ and $k=\dim_\mathbb{R}G$.
   Moreover, for the natural inclusion map $\iota:\mu^{-1}(\zeta)\hookrightarrow M$ and the quotient map $q:\mu^{-1}(\zeta)\to X_\zeta$, the hyperkähler structure $(\omega'_I,\omega'_J,\omega'_K)$ on $X_\zeta$ satisfies the following relation with respect to the hyperkähler structure $(\omega_I,\omega_J,\omega_K)$ on $M$: for any $A\in\{I,J,K\}$,
   \[\iota^*\omega_A = q^*\omega'_A.\]
\end{theorem}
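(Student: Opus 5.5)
The plan is the standard Riemannian-submersion argument of \cite{HKLR}. First I show that $N\coloneqq\mu^{-1}(\zeta)$ is a smooth $G$-invariant submanifold on which $G$ acts freely and properly, so that $X_\zeta=N/G$ is a manifold. Then I build the metric and the three complex structures on $X_\zeta$ from a horizontal distribution. Finally I conclude with Hitchin's lemma.

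\textbf{Step 1: $N$ is a smooth submanifold.} Since $\zeta\in\mathfrak{z}^*\otimes\mathbb{R}^3$ is fixed by the coadjoint action, $G$-equivariance of $\mu$ shows that $N$ is $G$-invariant. Fix $x\in N$ and write $\mathfrak{g}^*_x=\{\xi^*_x\mid \xi\in\mathfrak{g}\}$ for the tangent space to the orbit; by freeness it has dimension $k$.

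By condition (2) of Definition~\ref{def:HK moment map},
\[
d\langle\mu_A,\xi\rangle(v)=\omega_A(v,\xi^*)=-g(v,A\xi^*).
\]
Hence the annihilator of the image of $d\mu_x$ corresponds to $I\mathfrak{g}^*_x+J\mathfrak{g}^*_x+K\mathfrak{g}^*_x$. Freeness makes the map $\xi\mapsto\xi^*_x$ injective, so $d\mu_x$ is surjective exactly when this sum is direct of dimension $3k$.

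To check directness, I note that differentiating $\langle\mu_A,\eta\rangle$ along the orbit gives, by equivariance,
\[
g(A\xi^*,\eta^*)=\omega_A(\xi^*,\eta^*)=\langle\zeta_A,[\xi,\eta]\rangle=0,
\]
because $\zeta$ is central. Combining this with the quaternion relations, I expect
\[
\mathfrak{g}^*_x,\quad I\mathfrak{g}^*_x,\quad J\mathfrak{g}^*_x,\quad K\mathfrak{g}^*_x
\]
to be mutually orthogonal. This gives surjectivity of $d\mu_x$, so $N$ is a submanifold of dimension $4m-3k$. Properness of the action is automatic since $G$ is compact, and therefore $X_\zeta$ is a manifold of dimension $4m-4k$.

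\textbf{Step 2: the hyperkähler data on $X_\zeta$.} Define the horizontal space
\[
H_x\coloneqq\bigl(\mathfrak{g}^*_x\oplus I\mathfrak{g}^*_x\oplus J\mathfrak{g}^*_x\oplus K\mathfrak{g}^*_x\bigr)^{\perp}.
\]
By Step 1, $H_x$ is the orthogonal complement of $\mathfrak{g}^*_x$ inside $T_xN$. It is $I,J,K$-invariant because the $4k$-dimensional space being complemented is quaternionic and the structures are orthogonal. The projection $dq_x$ restricts to an isomorphism $H_x\simeq T_{[x]}X_\zeta$, and $G$-invariance of $g,I,J,K$ gives well-defined $g',I',J',K'$ on $X_\zeta$ that satisfy the quaternion relations, with $\omega'_A=g'(A'\cdot,\cdot)$.

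The identity $\iota^*\omega_A=q^*\omega'_A$ holds by construction on $H_x\times H_x$. It also holds when one argument is vertical: for $v\in T_xN$, the left side is $\omega_A(v,\xi^*)=d\langle\mu_A,\xi\rangle(v)=0$ because $\mu$ is constant on $N$, and the right side vanishes since $dq(\xi^*)=0$. Closedness then follows from $q^*d\omega'_A=d\iota^*\omega_A=0$ together with injectivity of $q^*$ on forms, $q$ being a submersion.

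\textbf{Step 3: integrability.} The step I expect to need the most care is integrability of $I',J',K'$, since it is not visible directly from the construction. Here I would invoke Hitchin's lemma: an almost hyperhermitian structure whose three fundamental forms are closed is hyperkähler. I would either cite it or reprove it quickly by showing that $I'=\omega_J'^{-1}\circ\omega_K'$ has vanishing Nijenhuis tensor. This lemma yields integrability and parallelism of $I',J',K'$, which completes the proof.
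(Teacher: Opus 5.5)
The paper gives no proof of this statement: it is quoted from \cite{HKLR} and used as a black box, so there is no internal argument to compare against. Your proposal is a correct and complete proof. The orthogonality you only ``expect'' in Step~1 follows at once from the isotropy you derived, e.g.\ $g(I\xi^*,J\eta^*)=g(-JI\xi^*,\eta^*)=g(K\xi^*,\eta^*)=\omega_K(\xi^*,\eta^*)=0$, and likewise for the other pairs. Steps~2--3 are sound.

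The place where your argument differs from the other classical presentation is integrability. You use Hitchin's lemma, which needs nothing beyond the closedness of the $\omega'_A$ that you already obtain from $q^*\omega'_A=\iota^*\omega_A$; this is short and purely local. The alternative route goes as follows. The map $\mu_{\mathbb{C}}=\mu_J+i\mu_K$ is $I$-holomorphic, so near $\mu^{-1}(\zeta)$ the level set $\mu_{\mathbb{C}}^{-1}(\zeta_{\mathbb{C}})$ is an $I$-complex, hence Kähler, submanifold. On it $\mu_I$ is a moment map, and $X_\zeta$ is its Kähler quotient at level $\zeta_I$. Cycling through $I,J,K$ then gives three Kähler structures for the same metric $g'$. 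That route presupposes the Kähler quotient theorem, so the integrability question is merely moved there. Its advantage is that it makes explicit the description of $(X_\zeta,I)$ as a reduction of $\mu_{\mathbb{C}}^{-1}(\zeta_{\mathbb{C}})$. This is the viewpoint behind the identification with $\mu_{\mathbb{C}}^{-1}(\zeta_{\mathbb{C}})/\!/G_{\mathbb{C}}$ that the paper uses afterwards.

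One small point: Step~2 uses that $G$ preserves $g$. This is part of the hypotheses in \cite{HKLR}, but the statement literally says only ``tri-holomorphically''. For connected $G$ it follows: $\mathcal{L}_{\xi^*}\omega_I=-d\,d\langle\mu_I,\xi\rangle=0$ and $\mathcal{L}_{\xi^*}I=0$ give $\mathcal{L}_{\xi^*}g=0$ via $g=\omega_I(\cdot,I\cdot)$. For disconnected $G$ it has to be assumed.
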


\begin{remark}
   The hyperkähler manifold $X_\zeta$ in the above theorem is called the \textit{hyperkähler quotient} with respect to the hyperkähler moment map $\mu$.
\end{remark}

By applying the theory of symplectic quotients and geometric invariant theory (GIT) by Kempf--Ness and Kirwan, the following lemma is obtained (cf.~\cite[Theorem 3.1, Theorem 4.1]{Nakajima94}). The proof is omitted.
\begin{lemma}\label{lemma:resolution map of HK quot}
    Let $G\subset Sp(n)$ be a compact Lie group acting linearly on the quaternionic vector space $M\coloneqq\mathbb{H}^n$, and let $\mu \colon M \to\mathfrak{g}^*\otimes\mathbb{R}^3$ be the standard hyperkähler moment map (with $\mu(0)=0$).
    For the natural $I$-holomorphic action of the complexification $G_\mathbb{C} \subset Sp(2n,\mathbb{C})$ on $M\simeq \mathbb{C}^{2n}$, we define the corresponding $I$-holomorphic moment map
    \[\mu_\mathbb{C}\coloneqq\mu_J+i\mu_K:M\to\mathfrak{g}_\mathbb{C}^*.\]
    Under the identification $\mathbb{R}^3 \simeq \mathbb{R}\times \mathbb{C}$, let $\zeta =(\zeta_I,\zeta_\mathbb{C}) \in \mathfrak{z}^* \otimes \mathbb{R}^3$, and let $\zeta_0\coloneqq(0,\zeta_\mathbb{C})$.
    Assume that the fiber $\mu_\mathbb{C}^{-1}(\zeta_\mathbb{C})$ has $0$-stable points (cf.\ \cite{Nakajima94}).
    Then, $X_{\zeta_0}\coloneqq\mu^{-1}(\zeta_0)/G$ is biholomorphic to the affine GIT quotient $\mu_\mathbb{C}^{-1}(\zeta_\mathbb{C})//G_\mathbb{C}$ with respect to the complex structure $I$.
    Furthermore, setting $X_\zeta\coloneqq\mu^{-1}(\zeta)/G$, there exists a natural $I$-holomorphic, proper, and birational map
    \[\pi_I:X_\zeta\to X_{\zeta_0}\]
    induced by the inclusion $\mu^{-1}(\zeta) \subset \mu_\mathbb{C}^{-1}(\zeta_\mathbb{C})$.
\end{lemma}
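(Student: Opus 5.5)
The statement to prove is Lemma~\ref{lemma:resolution map of HK quot}. I would follow the standard Kempf--Ness/Kirwan route as in \cite{Nakajima94}.

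The first step is to identify $X_{\zeta_0}$ with the affine GIT quotient. Since $\zeta_I=0$, we have $\mu^{-1}(\zeta_0)=\mu_I^{-1}(0)\cap\mu_{\mathbb{C}}^{-1}(\zeta_{\mathbb{C}})$. The fiber $\mu_{\mathbb{C}}^{-1}(\zeta_{\mathbb{C}})$ is a $G_{\mathbb{C}}$-invariant closed affine subvariety of $\mathbb{C}^{2n}$, and $G$ preserves the flat Kähler form $\omega_I$, with moment map $\mu_I$. The Kempf--Ness theorem then says the following.
\begin{itemize}
\item[(a)] A $G_{\mathbb{C}}$-orbit in $\mu_{\mathbb{C}}^{-1}(\zeta_{\mathbb{C}})$ is closed if and only if it meets $\mu_I^{-1}(0)$.
\item[(b)] In that case the intersection is a single $G$-orbit.
\end{itemize}
Since points of the affine GIT quotient correspond to closed orbits, the inclusion $\mu^{-1}(\zeta_0)\subset\mu_{\mathbb{C}}^{-1}(\zeta_{\mathbb{C}})$ induces a continuous bijection $\mu^{-1}(\zeta_0)/G\to\mu_{\mathbb{C}}^{-1}(\zeta_{\mathbb{C}})/\!/G_{\mathbb{C}}$. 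Properness of $\mu_I$ restricted to the relevant sets, using the standard estimate from the gradient flow of $|\mu_I|^2$, shows it is a homeomorphism. On the free locus it is biholomorphic for $I$, because there the $I$-holomorphic tangent space of the quotient is $\ker d\mu_{\mathbb{C}}/\mathfrak{g}_{\mathbb{C}}$ in both descriptions.

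The second step treats general $\zeta_I$. Let $\chi$ be the character of $G_{\mathbb{C}}$ determined by $\zeta_I\in\mathfrak{z}^*$; after rescaling we may take it rational, or else use Kirwan's version for real central values. Kirwan's and King's extension of Kempf--Ness identifies $\mu_I^{-1}(\zeta_I)\cap\mu_{\mathbb{C}}^{-1}(\zeta_{\mathbb{C}})/G$ with the GIT quotient $\mu_{\mathbb{C}}^{-1}(\zeta_{\mathbb{C}})/\!/_{\chi}G_{\mathbb{C}}=\operatorname{Proj}\bigoplus_{k\ge0}\mathbb{C}[\mu_{\mathbb{C}}^{-1}(\zeta_{\mathbb{C}})]^{G_{\mathbb{C}},\chi^k}$. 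The degree-zero part of this graded ring is the invariant ring, so there is a natural projective morphism $\pi_I:X_\zeta\to X_{\zeta_0}$. Concretely, $\pi_I$ sends a $\chi$-semistable point to the unique closed $G_{\mathbb{C}}$-orbit in the closure of its orbit, which is the map induced by the inclusion $\mu^{-1}(\zeta)\subset\mu_{\mathbb{C}}^{-1}(\zeta_{\mathbb{C}})$ composed with the quotient. This morphism is $I$-holomorphic and proper, being projective over an affine base.

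The third step is birationality. I expect this to be the main obstacle, and it is where the $0$-stability hypothesis enters. The $0$-stable locus, meaning points with closed orbit and finite stabilizer (free, by assumption), is open. Assuming it is nonempty, it is dense in the relevant component of $\mu_{\mathbb{C}}^{-1}(\zeta_{\mathbb{C}})$. A $0$-stable point is $\chi$-stable for every $\chi$, and it is the unique point of its fiber of $\pi_I$, since its orbit is already closed. Hence $\pi_I$ restricts to an isomorphism over the dense open set of $0$-stable orbits. This gives birationality, provided one checks that $X_\zeta$ is irreducible, or at least that the $0$-stable locus is dense in it. I would handle that check by a dimension count: both quotients have dimension $2n-2\dim G$ on the stable loci, using the flatness of $\mu_{\mathbb{C}}$ near stable points.
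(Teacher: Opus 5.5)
\paragraph{Verdict.} Your Steps 1 and 2 follow the Kempf--Ness/Kirwan route that the paper itself only invokes: it omits the proof and points to \cite[Theorem 3.1, Theorem 4.1]{Nakajima94}. They are fine except for one small point. When $\dim\mathfrak{z}\ge 2$, no rescaling makes $\zeta_I$ rational. So before $\operatorname{Proj}$, and with it properness, is available, you must move $\zeta_I$ to a rational point of its chamber, using that the $\zeta_I$-semistable locus is locally constant off the walls. The genuine gap is in Step 3.

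\paragraph{Density in $X_\zeta$.} You correctly show that $\pi_I$ is an isomorphism over the open set $X^{\circ}_{\zeta_0}\subset X_{\zeta_0}$ of $0$-stable orbits. Birationality, however, also needs $\pi_I^{-1}(X^{\circ}_{\zeta_0})$ to be dense in $X_\zeta$, and a dimension count cannot give this. Already in the case of interest, where $G$ acts freely on $\mu^{-1}(\zeta)$, $X_\zeta$ is smooth of pure dimension $2n-2\dim G$ at \emph{every} point. A connected component of $X_\zeta$ lying entirely over $X_{\zeta_0}\setminus X^{\circ}_{\zeta_0}$ would therefore have exactly the same dimension as the stable part, so knowing the dimension of the stable loci rules nothing out. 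What is needed is a strict inequality $\dim \pi_I^{-1}(X_{\zeta_0}\setminus X^{\circ}_{\zeta_0})<2n-2\dim G$. One way is a semismallness bound $\dim\pi_I^{-1}(y)\le\tfrac12(2n-2\dim G-\dim S)$ for $y$ in a lower stabilizer-type stratum $S$; this rests on the $I$-holomorphic symplectic form of $X_\zeta$ restricting on $\pi_I^{-1}(S)$ to a pull-back from $S$. Another is an independent proof that $X_\zeta$ is connected.

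\paragraph{Density in $X_{\zeta_0}$.} The target side has the same problem, hidden in the phrase ``dense in the relevant component''. The non-empty open set $X^{\circ}_{\zeta_0}$ is dense only in the components of $X_{\zeta_0}=\mu_{\mathbb{C}}^{-1}(\zeta_{\mathbb{C}})/\!/G_{\mathbb{C}}$ that it meets. Nothing in your argument excludes a stratum of non-free closed orbits lying outside the closure of $X^{\circ}_{\zeta_0}$. In that case you have only shown that the closure of $\pi_I^{-1}(X^{\circ}_{\zeta_0})$ maps birationally onto $\overline{X^{\circ}_{\zeta_0}}\subsetneq X_{\zeta_0}$.

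\paragraph{What would close the gap.} The natural missing input is irreducibility of $\mu_{\mathbb{C}}^{-1}(\zeta_{\mathbb{C}})$. Since $X_{\zeta_0}$ is its image and $X_\zeta$ is the quotient of its open $\zeta_I$-semistable part, this makes both quotients irreducible at once. Your isomorphism over the non-empty open set $X^{\circ}_{\zeta_0}$ then really does give birationality. This irreducibility is not a formal consequence of the existence of $0$-stable points; for quiver varieties it comes from Crawley-Boevey's work on the moment map. So for a general compact $G\subset Sp(n)$ it, or the two density statements above, must be proved separately.
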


\begin{remark}\label{remark:properties of HK quot}
   We make the following three observations on the hyperkähler quotient:
   \begin{enumerate}
      \item By the symmetry of $(I,J,K)$, holomorphic maps $\pi_J$ and $\pi_K$ are obtained similarly.
      \item If $X_\zeta$ is non-singular, then it naturally inherits a hyperkähler structure. Since the map $\pi_I$ preserves the induced $I$-holomorphic symplectic form, it automatically gives a (not necessarily projective) crepant resolution of $X_{\zeta_0}$.
      \item Let $\mathfrak{z}_{\mathbb{Z}}^*\simeq \mathrm{Hom}(G,S^1)\subset\mathfrak{z}^*$ denote the character lattice of $G$. If $\zeta_I$ is a lattice point (i.e., $\zeta_I \in \mathfrak{z}_{\mathbb{Z}}^*$), it can be shown that $\pi_I$ is indeed a projective crepant resolution (we omit the proof here).
   \end{enumerate}
\end{remark}

The goal of this subsection is to prove the following proposition.

\begin{proposition}\label{prop:asymp of HK quotient}
    Let $G\subset Sp(n)$ be a compact Lie group acting linearly on the quaternionic vector space $M\coloneqq \mathbb{H}^n$, and let $\mu \colon M \to\mathfrak{g}^*\otimes\mathbb{R}^3$ be the standard hyperkähler moment map (with $\mu(0)=0$).
    Then, the central quotient $X_0\coloneqq\mu^{-1}(0)/G$ is a conical symplectic variety, and there exists a hyperkähler cone metric $g_0$ on its regular locus $(X_0)_{\mathrm{reg}}$.
    Assume further that the central fiber $\mu_\mathbb{C}^{-1}(0)$ has $0$-stable points (cf.\ \cite{Nakajima94}).
    Then, for a generic parameter $\zeta \in \mathfrak{z}^*\otimes \mathbb{R}^3$ such that the hyperkähler quotient $X_\zeta \coloneqq\mu^{-1}(\zeta)/G$ is non-singular, the hyperkähler metric $g_\zeta$ on $X_\zeta$ is asymptotic to the cone metric $g_0$ at infinity.
\end{proposition}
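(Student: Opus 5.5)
The proof splits into two parts: the conical structure of $X_0$ with its cone metric, and the asymptotics of $g_\zeta$.

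\emph{Conical structure and cone metric on $X_0$.} The $\mathbb{R}^+$ scalar action $m\mapsto rm$ on $M=\mathbb{H}^n$ commutes with $G\subset Sp(n)$. The standard moment map is quadratic, so $\mu(rm)=r^2\mu(m)$, and hence $\mu^{-1}(0)$ is $\mathbb{R}^+$-invariant. The flat metric satisfies $r^*g_{\mathrm{flat}}=r^2 g_{\mathrm{flat}}$. By Theorem~\ref{HKLR thm:HK quotient} applied on the free locus, the quotient metric $g_0$ on $(X_0)_{\mathrm{reg}}$ satisfies $\lambda_r^*g_0=r^2g_0$, so it is a cone metric in the sense of Def.~\ref{def:cone met on coni symp vrt}. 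For the conical symplectic structure, Lemma~\ref{lemma:resolution map of HK quot} with $\zeta=0$ identifies $X_0\simeq\mu_\mathbb{C}^{-1}(0)/\!/G_\mathbb{C}$ (using the $0$-stability hypothesis), which is affine. The scalar $\mathbb{C}^*$-action on $\mathbb{C}^{2n}$ commutes with $G_\mathbb{C}$ and preserves $\mu_\mathbb{C}^{-1}(0)$, since $\mu_\mathbb{C}$ is homogeneous quadratic. It therefore descends to $X_0$. Its coordinate ring is a ring of invariants of a positively graded polynomial quotient, so the grading is positive. The reduced symplectic form has weight $2$ because $\omega_J+i\omega_K$ does. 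Hence $X_0$ is conical symplectic.

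\emph{Asymptotics of $g_\zeta$.} Take $x_0\in(X_0)_{\mathrm{reg}}$ represented by $m_0\in\mu^{-1}(0)$ with free $G$-orbit. The key device is rescaling: dilation by $r$ identifies $\mu^{-1}(\zeta)$ with $\mu^{-1}(r^{-2}\zeta)$, scaling the metric by $r^2$. This gives an isometry
\[
(X_\zeta, g_\zeta)\supset \text{nbhd of } r\cdot x \;\simeq\; (X_{r^{-2}\zeta}, r^2 g_{r^{-2}\zeta})\supset\text{nbhd of } x .
\]
So the quantity $\|g-g_0\|_{g_0(r\cdot x_0)}$, which is scale invariant, equals the deviation of $g_{\varepsilon\zeta}$ from $g_0$ near $x_0$, with $\varepsilon=r^{-2}\to0$. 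To compare these on a common space, use that the differential of $\mu$ is surjective at $m_0$ when the stabilizer is trivial. The implicit function theorem then gives, for small $\varepsilon$, a slice $S\ni m_0$ transverse to the $G$-orbits, together with a smooth family of local diffeomorphisms $\Phi_\varepsilon$ from a neighbourhood of $x_0$ in $X_0$ onto $X_{\varepsilon\zeta}$. This family is defined by following the level sets $\mu^{-1}(\varepsilon\zeta)\cap G\cdot S$, and $\Phi_0=\mathrm{id}$. The quotient metric depends smoothly on $\varepsilon$, being the restriction of the flat metric to the horizontal space of the level set. Hence $\Phi_\varepsilon^*g_{\varepsilon\zeta}\to g_0$ as $\varepsilon\to0$. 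The identification $X_{\mathrm{reg}}\subset Y$ used in Def.~\ref{def:asymp to cone metric} must be matched with this $\Phi_\varepsilon$ composed with the rescaling. Both are $I$-holomorphic identifications near infinity, via $\pi_I$ of Lemma~\ref{lemma:resolution map of HK quot} and the dilation, and I would check that they agree up to an error which also tends to $0$.

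\emph{Main obstacle.} The hard step is making this identification precise. The definition uses the crepant resolution map $\pi_I\colon X_\zeta\to X_{\zeta_0}$. When $\zeta_\mathbb{C}\neq0$, however, the target $X_{\zeta_0}$ is a deformation of $X_0$ rather than $X_0$ itself. I would handle this by restricting to $\zeta=(\zeta_I,0)$, after a hyperkähler rotation if necessary, so that $\pi_I$ lands in $X_0$. One then checks that $\pi_I$ is realised by the gradient flow of $|\mu_I|^2$ (Kempf--Ness), whose displacement scales like $O(r^{-1})$ relative to the distance $r$ from the origin. This displacement bound gives the uniform convergence needed.
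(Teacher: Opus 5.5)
\textbf{There is a genuine gap: the reduction to $\zeta_{\mathbb{C}}=0$.} A hyperkähler rotation acts on $\zeta=(\zeta_I,\zeta_J,\zeta_K)\in\mathfrak{z}^*\otimes\mathbb{R}^3$ only through the $\mathbb{R}^3$ factor. It replaces $\zeta_I,\zeta_J,\zeta_K\in\mathfrak{z}^*$ by orthogonal linear combinations of themselves, so their span in $\mathfrak{z}^*$ does not change. Hence it can bring $\zeta$ to the form $(\zeta_I,0,0)$ only when that span is at most one-dimensional, i.e.\ when $\zeta=\xi\otimes v$. As soon as $\dim\mathfrak{z}\ge 2$ (already for ALE spaces of type $A_k$, $k\ge 2$), a generic $\zeta$ is not of this form. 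Your argument therefore covers only a non-generic set of parameters, while the statement is about generic $\zeta$. For the remaining $\zeta$ you have not fixed the diffeomorphism $X_\zeta\cong Y$ along which $g_\zeta$ is regarded as a metric on $Y$, and the condition in Def.~\ref{def:asymp to cone metric} depends on that choice. As you noticed yourself, $\pi_I$ cannot play this role when $\zeta_{\mathbb{C}}\neq 0$.

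\textbf{The missing idea is the Kronheimer chain used in the paper.} Choose $\zeta$ generic so that $\zeta'=(\zeta_I,\zeta_J,0)$ and $\zeta''=(\zeta_I,0,0)$ also avoid the walls. Then $\pi_K\colon X_\zeta\to X_{\zeta'}$ and $\pi_J\colon X_{\zeta'}\to X_{\zeta''}$ are crepant birational maps onto smooth spaces, hence diffeomorphisms. The composite $\varphi=\pi_I\circ\pi_J\circ\pi_K$ then identifies $X_\zeta$ with $Y=X_{\zeta''}$ over $X_0$. Each factor commutes with real dilations, which gives $\lambda_r^*g_\zeta=r^2g_{r^{-2}\zeta}$ on $(X_0)_{\mathrm{reg}}\subset Y$, and hence your scale-invariance reduction to $\varepsilon=r^{-2}\to 0$ near a fixed $x_0$.

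\textbf{With the chain in place, your implicit-function-theorem idea is worth keeping.} Apply it to each factor directly rather than through a displacement estimate. Near a free orbit, each $\pi_A$ ($A\in\{I,J,K\}$) at unit scale is obtained by solving the real moment-map equation for $\mu_A$ along the complexified directions $A\xi^*$. This preserves the complex moment map, since its value is central. The linearization $\xi\mapsto d\mu_A(A\xi^*)$ is minus the Gram form $g(\xi^*,\eta^*)$ of the orbit, so it is invertible. Consequently $\varphi$ on $X_{\varepsilon\zeta}$ depends smoothly on $\varepsilon$ and is the identity at $\varepsilon=0$. This yields $C^1$ (indeed smooth) convergence of the transported metrics $g_{\varepsilon\zeta}\to g_0$ at $x_0$, which is exactly the step the paper only asserts by saying the metrics are ``naturally determined by the moment map''. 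A $C^0$ bound such as your $O(r^{-1})$ displacement estimate would not suffice on its own, since comparing metric tensors requires control of the derivative of the identification map.
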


\begin{proof}
We prove the following four assertions in order.

\begin{enumerate}[wide=\parindent, itemsep=\medskipamount]
   \item \textit{$X_0$ is a conical symplectic variety}:
   Consider the $I$-holomorphic moment map $\mu_\mathbb{C}=\mu_J+i\mu_K: M\to \mathfrak{g}^*_\mathbb{C}$.
   By Lemma \ref{lemma:resolution map of HK quot}, the central quotient $X_0$ can be written using the GIT quotient as follows:
   \[X_0=\mu_\mathbb{C}^{-1}(0)//G_\mathbb{C}.\]
   Therefore, $X_0$ is an affine variety.
   Since the $\mathbb{C}^*$-action $\lambda$ on $(M,I)$ satisfies $\lambda_c^* \mu_{\mathbb{C}} = c^2 \mu_{\mathbb{C}}$ for $c\in\mathbb{C}^*$ in our setting, it preserves the fiber $\mu^{-1}_\mathbb{C}(0)$.
   Consequently, we deduce that $X_0$ is a conical symplectic variety.

   \item \textit{$(X_0)_{\mathrm{reg}}$ admits a hyperkähler cone metric $g_0$}:
   Let $g_0$ be the hyperkähler metric on $(X_0)_{\mathrm{reg}}$ determined by the hyperkähler moment map $\mu$.
   For the natural quotient map $q:\mu^{-1}(0)\to X_0$, we have $\iota^*g=q^*g_0$ on a dense open set of $\mu^{-1}(0)\subset M$~(cf.~Theorem~\ref{HKLR thm:HK quotient}).
   Since the metric $g$ is a hyperkähler cone metric, combined with Remark~\ref{remark:weight of HK cone metric}, we have $\lambda_r^*g_0 = r^2g_0\ (r>0)$.
   That is, $g_0$ is a hyperkähler cone metric.

   \item \textit{The hyperkähler quotient $X_\zeta$ is naturally diffeomorphic to a crepant resolution $Y$ of $X_0$}:
   We follow the argument of Kronheimer \cite{Kronheimer90}.
   Let $g_\zeta$ be the hyperkähler metric on the hyperkähler quotient $X_\zeta$, where we denote the point $\zeta$ by $\zeta=(\zeta_I,\zeta_J,\zeta_K) \in\mathfrak{g}^*\otimes \mathbb{R}^3$.
   Since the group $G$ acts linearly on the vector space $M$, the set of points $\xi\in\mathfrak{g}^*\otimes \mathbb{R}^3$ such that $X_\xi\coloneqq\mu^{-1}(\xi)/G$ admits singularities can be written as a union of finitely many hyperplanes in the vector space $\mathfrak{g}^*\otimes \mathbb{R}^3$. (That is, it has a wall-chamber structure; we omit the proof here.)
   Therefore, by considering the $SO(3)$-action on $\mathbb{R}^3$ by hyperkähler rotation,
   we may assume that the parameter $\zeta$ is generic so that $X_\zeta$, $X_{\zeta'}$, and $X_{\zeta''}$ are all non-singular, where we set $\zeta'\coloneqq(\zeta_I,\zeta_J,0)$ and $\zeta''\coloneqq(\zeta_I,0,0)$.
   Since these spaces are non-singular, the maps $\pi_K, \pi_J$, and $\pi_I$ obtained in Lemma \ref{lemma:resolution map of HK quot} automatically give crepant resolutions (see Remark~\ref{remark:properties of HK quot}), and we obtain a smooth map
   \begin{equation}
      \varphi:X_\zeta \xrightarrow{\pi_K} X_{\zeta'}\xrightarrow{\pi_J}X_{\zeta''}\xrightarrow{\pi_I}X_0 \label{seq:resolution wrt HK moment map}
   \end{equation}
   as their composition.
   Setting $Y\coloneqq X_{\zeta''}$, which is an $I$-holomorphic (not necessarily projective) crepant resolution of $X_0$, it follows that $X_\zeta$ is naturally diffeomorphic to the resolution $Y$.

   \item \textit{Asymptotic behavior of the metric $g_\zeta$ on the hyperkähler quotient $X_\zeta$}:
   Since
   \begin{equation*}
      \lambda_r^*\mu = r^2\mu
   \end{equation*}
   holds for any $r>0$, $X_{r^2\zeta}=\lambda_r (X_{\zeta})$ is also a hyperkähler quotient.
   Here, we are considering the real scalar multiplication $r \colon X_{r^{-2}\zeta}\to X_{\zeta}$ induced by the scalar multiplication $r \colon \mu^{-1}(r^{-2}\zeta) \to \mu^{-1}(\zeta)$ on $M$.
   Note that each $X_{r^2\zeta}$ is biholomorphic to $X_\zeta$ as a complex variety.
   Let $g_{r^2\zeta}$ be the hyperkähler metric on the hyperkähler quotient $X_{r^2\zeta}$.
   Since the map $\varphi$ in (\ref{seq:resolution wrt HK moment map}) is compatible with the $\mathbb{R}^+$-action on $M$,
   it satisfies
   \[\lambda_r^*g_\zeta = r^2 g_{r^{-2}\zeta}\]
   on $(X_0)_{\mathrm{reg}}\subset Y$.
   For any $x \in (X_0)_{\mathrm{reg}}\subset Y$ and $r>0$, the following equality holds: noting that $g_0$ is a cone metric,
   \begin{equation}
      \begin{aligned}
         ||g_{\zeta}-g_0||_{g_0(r\cdot x)} &= ||\lambda_r^*(g_{\zeta}-g_0)||_{\lambda_r^*g_0(x)}\\
         &= ||g_{r^{-2}\zeta}-g_0||_{g_0(x)}. \label{eq:iikae of metric of HK quot}
      \end{aligned}
   \end{equation}
   Now, since each $g_{r^{-2}\zeta}$ is a metric naturally determined by the hyperkähler moment map,
   the right-hand side of equation \eqref{eq:iikae of metric of HK quot} converges to $0$ as $r\to \infty$.
   Therefore, it follows that the metric $g_\zeta$ on the hyperkähler quotient $X_\zeta\simeq Y$ is asymptotic to the metric $g_0$ at infinity on $Y$ (cf.~Def.~\ref{def:asymp to cone metric}).
   Consequently, the assertion of the proposition follows. \qedhere
\end{enumerate}
\end{proof}

\begin{remark}\label{remark:example of HK quotient}
   As mentioned in Remark~\ref{remark:properties of HK quot}~(3), if we further assume that $\zeta_I$ is a lattice point (i.e., $\zeta_I \in \mathfrak{z}^*_{\mathbb{Z}}$), then the resolution $Y\coloneqq X_{\zeta''}$ is a projective crepant resolution of the central quotient $X_0$.
   In this case, by the argument on hyperkähler-quotient twistor space (we omit the detailed proof here), we can show that the hyperkähler metric $g_\zeta$ on $X_\zeta \simeq Y$ is an algebraic hyperkähler metric on the projective crepant resolution $Y$.
   Combining this with Proposition~\ref{prop:asymp of HK quotient}, we conclude that the setting of our main theorem (Theorem~\ref{main thm: universality of PTM}) is fully satisfied for such a metric $g_\zeta$.
   Therefore, our main theorem can be applied to these hyperkähler quotients.\medskip

   Typical examples of such hyperkähler quotients include Nakajima quiver varieties (cf.~\cite{Nakajima94}) and toric hyperkähler manifolds (cf.~\cite{Bielawski_Dancer00}) that satisfy the assumptions of Proposition~\ref{prop:asymp of HK quotient}.
   Specifically, since these spaces are inherently constructed via linear actions of compact Lie groups on $\mathbb{H}^n$, our main theorem can be applied to them as long as the following two conditions hold: the underlying hyperkähler moment map is standard (i.e., $\mu(0)=0$, which is automatically satisfied for quiver varieties by definition) and the central fiber $\mu_\mathbb{C}^{-1}(0)$ has $0$-stable points.
\end{remark}

\subsubsection{QALE Hyperkähler Metrics}

QALE hyperkähler metrics, introduced by Joyce \cite{Joyce01} as a natural higher-dimensional generalization of ALE hyperkähler metrics, are also examples to which this study is applicable.
In this subsection, we assume the following.

\begin{assumption}\label{assumption:QALE HK is algebraic}
   Let $G<Sp(n)$ be a finite group, and let $X=\mathbb{C}^{2n}/G$ be the quotient space.
   Assume that $X$ admits a crepant resolution $Y$.
   We assume that any QALE hyperkähler metric on $Y$ (cf.~\cite[Definition 2.1]{Joyce01}), provided it exists, is necessarily algebraic.
\end{assumption}

\begin{remark}
   According to Joyce \cite[p.115]{Joyce01}, the above is proved by hypercomplex algebraic geometry, but the details are not described. Note that it is not known whether a QALE hyperkähler metric exists on an arbitrary crepant resolution $Y$.
\end{remark}

If a metric $g$ is a QALE metric on a crepant resolution $Y$, then it is asymptotic to the standard flat metric $g_0$ on $X$ at infinity on $Y$ in the sense of Definition~\ref{def:asymp to cone metric}. Therefore, under Assumption \ref{assumption:QALE HK is algebraic}, this study (Theorem~\ref{main thm: universality of PTM}) is applicable to QALE hyperkähler metrics.

Note that $X$ has an isolated singularity if and only if $\dim_\mathbb{C}Y=2$.
In this case, a QALE hyperkähler metric on $Y$ coincides with an ALE gravitational instanton.
Thus, by Corollary~\ref{cor:moduli space of HK str, isolated sing case}, we deduce that the moduli space of ALE gravitational instantons has real dimension $3d-3$, where $d=\dim H^2(Y;\mathbb{C})$.
This result also follows immediately from the result of Kronheimer \cite{Kronheimer89}.

On the other hand, when $\dim_\mathbb{C}Y\geq 4$, Corollary~\ref{cor:moduli space of HK str, isolated sing case} is not applicable because $X$ has non-isolated singularities (cf.~Question~\ref{ques:openess in case noncpt E}).

\subsection{Future Directions}

In this study, we showed that when there exists a hyperkähler metric with asymptotic behavior, the corresponding twistor space can be embedded into the principal twistor model. We consider the following question in the converse direction:
\begin{question}
    Let $X$ be a conical symplectic variety. Assume that the regular locus $X_{\mathrm{reg}}$ admits an algebraic hyperkähler cone metric $g_0$, and that $X$ admits a crepant resolution $Y$.
    By Proposition~\ref{prop:good triple from HK cone metric}, the metric $g_0$ induces the good triple on $X$. For this good triple, consider the principal twistor model $(\mathcal{Y}(1)\to\mathcal{C}(2), \omega_P, \tau_P)$ constructed in Proposition~\ref{main prop: PTM of Y}. In this case, under what conditions is the twistor model $Z_s$, obtained by slicing $\mathcal{Y}(1)$ along a real section $s$ of the vector bundle $\mathcal{C}(2)$, actually a twistor space?
    For example, when the generic fiber of $Z_s$ is an affine variety, does $Z_s$ admit a family of twistor lines that forms a foliation?
\end{question}
   This question can be reformulated as follows:
\begin{question} \label{ques:characterize the period domain}
   Characterize the image (i.e., the period domain) of the map
   \[\Psi:\mathcal{M}\to H^0(\mathbb{P}^1,\mathcal{C}(2))^{\sigma_2}\]
   introduced in Corollary~\ref{cor:inclusion of moduli space}.
\end{question}

\begin{remark}
   The central fiber of the model $Z_s$ is diffeomorphic to the crepant resolution $Y$.
   When $Z_s$ admits a family of twistor lines, the following can be said on the asymptotic behavior of the corresponding hyperkähler metric $g_s$ on $Y$:
   Assume that there exists a family of twistor lines on the model $Z_s$ that exists on the same sheet as those of the cone metric $g_0$ with respect to the local covering map $\Phi$ in Proposition~\ref{prop:str of Tw}. By tracing the proof of Proposition~\ref{prop:uniqueness of tw lines} backwards, one can show that the metric $g_s$ is asymptotic to the metric $g_0$ at infinity on $Y$.
\end{remark}

In considering the moduli space of algebraic hyperkähler structures, we investigated the openness of the twistor space in the principal twistor model (cf.~\S \ref{subsubsec:openess of Tw sp in PTM}).
In this context, the following question remains:
\begin{question}\label{ques:openess in case noncpt E}
   Let $X$ be a conical symplectic variety. Assume that the regular locus $X_{\mathrm{reg}}$ admits an algebraic hyperkähler cone metric $g_0$, and that $X$ admits a crepant resolution $Y$.
   Show that the map $\Psi:\mathcal{M}\to H^0(\mathbb{P}^1,\mathcal{C}(2))^{\sigma_2}$ introduced in Corollary~\ref{cor:inclusion of moduli space} is an open map when $X$ has non-isolated singularities.
\end{question}


\section*{Acknowledgments}
The author would like to express gratitude to his supervisor, Professor Nobuhiro Honda, for providing regular opportunities for discussion and an environment that encourages independent research.
The author would also like to thank the members of his laboratory for beneficial discussions during seminars.
This work was supported by JST SPRING, Japan Grant Number JPMJSP2180.


\bibliography{source/references}
\bibliographystyle{jabbrv}
\enlargethispage{2\baselineskip} 

\end{document}